\documentclass[10pt,a4paper]{amsart}


\usepackage[dvipsnames]{xcolor} 
\usepackage{amsmath,amsthm,amssymb}
\usepackage{enumitem}
\usepackage{tikz}
\usepackage{hyperref}
\usepackage{amsfonts,marvosym,amscd}
\usepackage{mathtools}
\usepackage{mathscinet}
\usepackage{wasysym} 
\usepackage{graphicx}
\usepackage{psfrag}
\usepackage{datetime}
\usepackage{tikz-cd}
\usepackage{url}
\usepackage{adjustbox}

\DeclareFontFamily{U}{mathx}{\hyphenchar\font45}
\DeclareFontShape{U}{mathx}{m}{n}{
 <5> <6> <7> <8> <9> <10>
 <10.95> <12> <14.4> <17.28> <20.74> <24.88>
 mathx10
 }{}
\DeclareSymbolFont{mathx}{U}{mathx}{m}{n}
\DeclareFontSubstitution{U}{mathx}{m}{n}
\DeclareMathAccent{\widecheck}{0}{mathx}{"71}

\usepackage{tabstackengine}
\setstackgap{L}{.75\baselineskip}


\usetikzlibrary{decorations.pathmorphing}
\usetikzlibrary{positioning}
\usetikzlibrary{calc}
\usetikzlibrary{intersections}


\numberwithin{equation}{section}
\raggedbottom


\newtheorem{theorem}{Theorem}[section]
\newtheorem{lemma}[theorem]{Lemma}
\newtheorem{proposition}[theorem]{Proposition}
\newtheorem{corollary}[theorem]{Corollary}

\newenvironment{customthm}[1]
 {\innercustomthm}
 {\endinnercustomthm}

\theoremstyle{definition}

\newtheorem{example}[theorem]{Example}
\newtheorem{convention}[theorem]{Convention}
\newtheorem{construction}[theorem]{Construction}

\theoremstyle{remark}
\newtheorem{remark}[theorem]{Remark}
\newtheorem{observation}[theorem]{Observation}


\newcommand{\subs}[3]{\binom{[#1]}{#2}_{#3}}
\newcommand{\floor}[1]{\lfloor #1 \rfloor}
\newcommand{\ceil}[1]{\lceil #1 \rceil}

\newcommand{\simp}{\mathring{e}}

\newcommand{\intsp}{\mathrm{ISp}}

\let\emptyset\varnothing


\makeatletter
\setcounter{tocdepth}{2}

\newcommand\@dotsep{4.5}
\def\@tocline#1#2#3#4#5#6#7{\relax
  \ifnum #1>\c@tocdepth 
  \else
    \par \addpenalty\@secpenalty\addvspace{#2}%
    \begingroup \hyphenpenalty\@M
    \@ifempty{#4}{%
      \@tempdima\csname r@tocindent\number#1\endcsname\relax
    }{%
      \@tempdima#4\relax
    }%
    \parindent\z@ \leftskip#3\relax \advance\leftskip\@tempdima\relax
    \rightskip\@pnumwidth plus1em \parfillskip-\@pnumwidth
    #5\leavevmode\hskip-\@tempdima{#6}\nobreak
    \leaders\hbox{$\m@th\mkern \@dotsep mu\hbox{.}\mkern \@dotsep mu$}\hfill
    \nobreak
    \hbox to\@pnumwidth{\@tocpagenum{\ifnum#1=1\fi#7}}\par%
    \nobreak
    \endgroup
  \fi}
\AtBeginDocument{%
\expandafter\renewcommand\csname r@tocindent0\endcsname{0pt}
}
\def\l@subsection{\@tocline{2}{0pt}{2.5pc}{5pc}{}}
\makeatother


\usepackage[backend=biber,style=alphabetic,url=false,maxbibnames=99]{biblatex}
\addbibresource[location=local]{C:/Users/nchls/OneDrive - University of Leicester/PhD/Jabref/biblio.bib}
\DeclareFieldFormat*{title}{#1}
\renewbibmacro{in:}{}
\emergencystretch=1em 


\title[Quotients of the higher Bruhat orders]{The first higher Stasheff--Tamari orders are quotients of the higher Bruhat orders}
\author{Nicholas J. Williams}
\address{School of Mathematics and Actuarial Science, University of Leicester, University Road, LE1 7RH}
\email{njw40@le.ac.uk}
\subjclass[2010]{Primary: 06A07; Secondary: 05B45.}
\keywords{Cyclic polytopes, cyclic zonotopes, higher Bruhat orders, higher Stasheff--Tamari orders, triangulations, cubillages, vertex figures, KP solitons}


\begin{document}

\begin{abstract}
We prove the conjecture that the higher Tamari orders of Dimakis and M\"uller-Hoissen coincide with the first higher Stasheff--Tamari orders. To this end, we show that the higher Tamari orders may be conceived as the image of an order-preserving map from the higher Bruhat orders to the first higher Stasheff--Tamari orders. This map is defined by taking the first cross-section of a cubillage of a cyclic zonotope. We provide a new proof that this map is surjective and show further that the map is full, which entails the aforementioned conjecture. We explain how order-preserving maps which are surjective and full correspond to quotients of posets. Our results connect the first higher Stasheff--Tamari orders with the literature on the role of the higher Tamari orders in integrable systems.
\end{abstract}

\maketitle

\tableofcontents

\section{Introduction}

Two of the best known and most widely studied partially ordered sets in mathematics are the Tamari lattice \cite{tamari} and the weak Bruhat order on the symmetric group. The Tamari lattice appears in a broad range of areas of mathematics, physics, and computer science \cite{tamari-festschrift}. It was introduced by Tamari \cite{tamari,huang-tamari} as an order on the set of bracketings of a string. It is the 1-skeleton of the associahedron, which was famously used by Stasheff in topology to define $A_{\infty}$-spaces \cite{stasheff}. The weak Bruhat order on the symmetric group was first studied by statisticians in the 1960s \cite{savage,lehmann,yo_bruhat} and is now a fundamental part of Coxeter theory---see \cite{bb_coxeter}, for example. It furthermore provides a useful framework for studying questions in the theory of social choice \cite{abello_thesis,chameni-nembua,abello_bruhat,dkk_condorcet_tiling}. Both orders also appear in the representation theory of algebras \cite{buan-krause,thomas_tamari,mizuno-preproj,irrt} and the theory of cluster algebras \cite{fz-y,rs_framework}.

These posets have higher-dimensional versions, namely the first higher Stasheff--Tamari orders $\mathcal{S}(n, \delta)$ \cite{kv-poly,er} and the higher Bruhat orders $\mathcal{B}(n, \delta + 1)$ \cite{ms}. These higher posets arise as orders on (equivalence classes of) maximal chains in the original posets, and then as orders on (equivalence classes of) maximal chains in those posets, and so on \cite{ms,rambau}. For instance, the elements of the higher Bruhat order $\mathcal{B}(n,2)$ correspond to reduced expressions for the longest element in the symmetric group, while the covering relations correspond to braid moves. In this way, the higher posets encode higher-categorical data latent within the original posets. Another way of thinking of the higher-dimensional posets is geometrically. The Tamari lattice concerns triangulations of convex polygons, whereas the first higher Stasheff--Tamari orders concern triangulations of cyclic polytopes; the weak Bruhat order concerns complexes of edges of hypercubes, whereas the higher Bruhat orders concern complexes of faces of hypercubes, known as \emph{cubillages} or \emph{fine zonotopal tilings}.

The first higher Stasheff--Tamari orders and the higher Bruhat orders have their own connections with other areas of mathematics. The first higher Stasheff--Tamari orders occur in the representation theory of algebras \cite{njw-hst} and algebraic $K$-theory \cite{poguntke}. The higher Bruhat orders were originally introduced to study hyperplane arrangements \cite{ms} and have found application in the theories of Soergel bimodules \cite{elias_bruhat}, quasi-commuting Pl\"ucker coordinates \cite{lz}, and social choice \cite{gr_bruhat}. They are also tightly connected with the quantum Yang--Baxter equation and its generalisations \cite[and references therein]{dm-h-simplex}.

The relation between the Tamari lattice and the weak Bruhat order has been of significant interest. There is a classical surjection from the latter to the former, which can be realised as a map from permutations to binary trees. This map arises in many different places \cite{bw_coxeter,bw_shell_2,tonks,lr_hopf,lr_order,reading_cambrian}. Kapranov and Voevodsky extended this surjection to a map from the higher Bruhat orders to the first higher Stasheff--Tamari orders $f \colon \mathcal{B}(n, \delta) \to \mathcal{S}(n + 2, \delta + 1)$ \cite{kv-poly}, which they conjectured was a surjection as well. This remains an open problem despite some detailed studies \cite{rambau,thomas-bst}.

In this paper, we consider a closely related map from the higher Bruhat orders to the first higher Stasheff--Tamari orders $g\colon\mathcal{B}(n,\delta+1) \rightarrow \mathcal{S}(n,\delta)$. This map was first considered as a map of posets in \cite{thomas-bst}, in its dual form, and was itself considered in \cite[Appendix B]{dkk-survey}. As a map of sets it was considered and shown to be surjective in \cite{rs-baues}, using the language of \emph{lifting triangulations}. We provide a new proof of surjectivity, and go further by showing that the map is full. We call order-preserving maps which are both surjective and full \emph{quotient maps of posets}.

\begin{customthm}{A}[Theorem~\ref{thm:quot}]\label{thm:int_a}
The map $g \colon\mathcal{B}(n,\delta+1) \rightarrow \mathcal{S}(n,\delta)$ is a quotient map of posets.
\end{customthm}

Indeed, in this paper we give a new approach to quotients of posets. The quotient of a poset by an arbitrary equivalence relation is not always a well-defined poset. Previous authors \cite{hs-char-poly,cs_cong,reading_order} have given sufficient conditions for the quotient to be well-defined which ensure that other structure is also preserved, such as lattice-theoretic properties. Because the posets we are considering are not in general lattices \cite{ziegler-bruhat,njw-hst}, we instead consider weaker conditions, which are necessary and sufficient for the quotient to be a well-defined poset. We show that quotients of posets in this sense correspond to order-preserving maps which are surjective and full.

Part of the motivation for considering quotient posets in the way that we do stems from \cite{dm-h}, where Dimakis and M\"uller-Hoissen apply an equivalence relation to the higher Bruhat orders to define the ``higher Tamari orders'' in order to describe a class of soliton solutions of the KP equation. In subsequent work \cite{dm-h-simplex}, the authors make further connections with mathematical physics by using the higher Tamari orders to define \emph{polygon equations}, an infinite family of equations which generalise the pentagon equation. The pentagon equation appears in many different areas of physics, including the theory of angular momentum in quantum mechanics \cite{dmh_amqp}, and conformal field theory \cite{dmh_cft}, as well as several other places \cite{dmh_qha,dmh_rd,dmh_qd}. The polygon equations which generalise the pentagon equation themselves occur in category theory \cite{kv-zam,street-fusion} and as ``Pachner relations'' in 4D topological quantum field theory \cite{dmh_tqft}.

Dimakis and M\"uller-Hoissen conjectured the higher Tamari orders to coincide with the first higher Stasheff--Tamari orders. We prove this conjecture by showing that the higher Tamari orders are given by the image of the map $g$, as first noted in \cite[Appendix B]{dkk-survey}. We then apply Theorem~\ref{thm:int_a}; for the two sets of orders to be equal, it is only necessary for the map $g\colon \mathcal{B}(n, \delta+1) \to \mathcal{S}(n, \delta)$ to be a quotient map of posets in our sense, rather than in any stronger sense. The upshot of our result is that two far-reaching sets of combinatorics are united. We unite the first higher Stasheff--Tamari orders, with their connections to the representation theory of algebras \cite{njw-hst}, and the higher Tamari orders, which describe classes of KP solitons \cite{dm-h} and from which arise the polygon equations \cite{dm-h-simplex}. Since the map $g$ is defined by taking a certain cross-section of a cubillage, our work shows the connection between \cite{dm-h} and the papers \cite{kk,gpw}, in which KP solitons are related to cross-sections of three-dimensional cubillages, building on \cite{kw_invent,kw_advances,huang_thesis}. See also \cite{galashin,olarte_santos}, for more work on cross-sections of cubillages.

\begin{customthm}{B}[Corollary~\ref{cor:t=st}]
The higher Tamari orders and the first higher Stasheff--Tamari orders coincide.
\end{customthm}

Our approach is to use the description of the higher Bruhat orders in terms of cubillages of cyclic zonotopes and the description of these objects in terms of separated collections established in \cite{gp} and studied extensively in \cite{dkk,dkk-interrelations,dkk-survey,dkk-weak,dkk-symmetric}. These tools allow us to construct cubillages which are pre-images under the map $g$, which is instrumental in the proof of Theorem~\ref{thm:int_a}.

This paper is structured as follows. In Section~\ref{sect:notation}, we lay out some notation and conventions that we use in the paper. We give background on the higher Bruhat orders and cubillages of cyclic zonotopes in Section~\ref{sect-hbo} and on the higher Stasheff--Tamari orders and triangulations of cyclic polytopes in Section~\ref{sect-hst}. In Section~\ref{sect:g_interpretations} we consider the map $g \colon \mathcal{B}(n, \delta + 1) \to \mathcal{S}(n, \delta)$. We give three different characterisations of this map in Sections~\ref{sect:g_geom},~\ref{sect:g_comb}, and~\ref{sect:vis}, which correspond to the three different possible interpretations of the higher Bruhat orders. In Section~\ref{sect:quotient_framework}, we lay the necessary groundwork in the theory of quotient posets to make the statement that $g$ is a quotient map of posets precise. In Section~\ref{sect-surj} we give a new proof of the fact that the map $g$ is surjective. We prove in Section~\ref{sect-quot} that it is full, and hence a quotient map of posets.

\subsection*{Acknowledgements}

This paper forms part of my PhD studies. I would like to thank my supervisor Professor Sibylle Schroll for her continuing support and attention. I would also like to thank Mikhail Kapranov for a clarification, Jordan McMahon for helpful comments on an earlier version of this paper, and Hugh Thomas and Mikhail Gorsky for interesting discussions. I am supported by a studentship from the University of Leicester. 

\section{Terminology and conventions}\label{sect:notation}

Here we outline some general terminology and conventions that we use throughout the paper.

\subsubsection{Notation}

We use $[n]$ to denote the set $\left\lbrace 1, \dots, n\right\rbrace $ and $\subs{n}{k}{}$ to denote the subsets of $[n]$ of size $k$. We sometimes refer to such subsets as $k$-subsets. Given a set $A \subseteq [n]$ such that $\# A = k + 1$, unless otherwise indicated, we shall denote the elements of $A$ by $A = \{a_{0}, \dots, a_{k}\}$, where $a_{0} < \dots < a_{k}$. The same applies to other letters of the alphabet: the upper case letter denotes the set; the lower case letter is used for the elements, which are ordered according to their index starting from 0.

\subsubsection{Ordering}

In this paper, it is convenient for us to consider both the linear and cyclic orderings of $[n]$. Unless stated otherwise, it should be assumed that we refer to the linear ordering on this set.

We denote by $(a,b), [a,b] \subseteq [n]$ respectively the \emph{open} and \emph{closed cyclic intervals}. That is,
\begin{align*}
(a,b) &:= \{ i \in [n] \mid a < i < b \text{ is a cyclic ordering} \}, \\
[a,b] &:= (a,b) \cup \{a,b\}.
\end{align*}
The one exception to this is that we will find it convenient to set $[a, a - 1] := \emptyset$. When we have $a<b$ in the linear ordering on $[n]$, we say that $[a,b]$ and $(a,b)$ are \emph{intervals}. We call $I \subseteq [n]$ an \emph{$l$-ple interval} if it can be written as a union of $l$ intervals, but cannot be written as a union of fewer than $l$ intervals. We similarly define \emph{cyclic $l$-ple intervals}.

When we refer to the elements $a_{i}$ of a subset $A \subseteq [n]$ with $\# A = d + 1$, we will sometimes write $i \in \mathbb{Z}/(d+1)\mathbb{Z}$ to indicate that one should interpret $a_{d + 1}$ as being equal to $a_{0}$. That is, if $A = \{1, 3, 5\}$, then $a_{0} = 1, a_{1} = 3, a_{2} = 5, a_{3} = 1$.

Given a linearly ordered set $L$ and $S \subset L$, we say that an element $l \in L \setminus S$ is an \emph{even gap} in $S$ if $\#\{s \in S \mid s > l \}$ is even. Otherwise, it is an \emph{odd gap}. A subset $S \subset L$ is \emph{even} if every $l \in L \setminus S$ is an even gap. A subset $S \subset L$ is \emph{odd} if every $l \in L \setminus S$ is an odd gap.

Let $\mathcal{P}$ be a partially ordered set. We say that $q$ \emph{covers} $p$ in $\mathcal{P}$ if $p < q$ and whenever $p \leqslant r \leqslant q$ in $\mathcal{P}$, then $r = p$ or $r = q$. If $q$ covers $p$ in $\mathcal{P}$ then we write $p \lessdot q$. If $\mathcal{P}$ is a finite poset, we have that $\mathcal{P}$ is the transitive-reflexive closure of its covering relations. Hence, in this case one can define $\mathcal{P}$ by specifying its covering relations.

\subsubsection{Convex geometry}

Recall that a set $\Gamma \subseteq \mathbb{R}^{\delta}$ is \emph
{convex} if, for any $\mathbf{x}, \mathbf{y} \in \Gamma$, the line segment $\overline{\mathbf{xy}}$ between $\mathbf{x}$ and $\mathbf{y}$ is contained in $\Gamma$. Given a set of points $\Gamma \subseteq \mathbb{R}^{\delta}$, the \emph{convex hull} $\mathrm{conv}(\Gamma)$ is defined to be the smallest convex set containing $\Gamma$ or, equivalently, the intersection of all convex sets containing $\Gamma$.

A \emph{convex polytope} is the convex hull of a finite set of points in $\mathbb{R}^{\delta}$. Let $\Delta \subset \mathbb{R}^{\delta}$ be a convex polytope. A \emph{facet} of $\Delta$ is a face of codimension one. The \emph{upper} facets of $\Delta$ are those that can be seen from a very large positive $\delta$-th coordinate. The \emph{lower} facets of $\Delta$ are those that can be seen from a very large negative $\delta$-th coordinate. A \emph{$k$-face} of a polytope is a face of dimension $k$. A \emph{subcomplex} of a polytope is a union of faces of the polytope.

Recall that for $\Gamma, \Gamma' \subseteq \mathbb{R}^{\delta}$, the \emph{Minkowski sum} of $\Gamma$ and $\Gamma'$ is defined to be \[\Gamma + \Gamma' = \{\mathbf{x} + \mathbf{y} \mid \mathbf{x} \in \Gamma,\, \mathbf{y} \in \Gamma'\}.\]

\section{Background}\label{sect-background}

\subsection{Higher Bruhat orders}\label{sect-hbo}

In this section we give the definition of the higher Bruhat orders. The fundamental definition of the higher Bruhat orders for our purposes is the description in terms of cubillages of cyclic zonotopes given in \cite{kv-poly} and formalised in \cite{thomas-bst}. After giving this definition, we give the characterisation of cubillages of cyclic zonotopes established in \cite{gp} and studied in \cite{dkk}. Finally, we explain the original definition of the higher Bruhat orders from \cite{ms}, which we will also need.

\subsubsection{Cubillages}

We first give the geometric description of the higher Bruhat orders due to \cite{kv-poly,thomas-bst}. Consider the \emph{Veronese curve} $\xi\colon \mathbb{R} \rightarrow \mathbb{R}^{\delta+1}$, given by $\xi_{t}=(1,t, \dots, t^{\delta})$. Let $\left\lbrace t_{1}, \dots, t_{n}\right\rbrace \subset \mathbb{R}$ with $t_{1}<\dots<t_{n}$ and $n \geqslant \delta+1$. The \emph{cyclic zonotope} $Z(n,\delta+1)$ is defined to be the Minkowski sum of the line segments \[\overline{\mathbf{0}\xi_{t_{1}}} + \dots + \overline{\mathbf{0}\xi_{t_{n}}},\] where $\mathbf{0}$ is the origin. The properties of the zonotope do not depend on the exact choice of $\{t_{1}, \dots, t_{n}\} \subset \mathbb{R}$. Hence, for ease we set $t_{i}=i$. For $k \geqslant l$ we have a canonical projection map
\begin{align*}
\pi_{k,l} \colon \mathbb{R}^{k} &\to \mathbb{R}^{l} \\
(x_{1}, \dots, x_{k}) &\mapsto (x_{1}, \dots, x_{l})
\end{align*}
which maps $Z(n, k) \to Z(n, l)$.

A \emph{cubillage} $\mathcal{Q}$ of $Z(n,\delta+1)$ is a subcomplex of $Z(n, n)$ such that $\pi_{n, \delta+1} \colon \mathcal{Q} \to Z(n, \delta +1)$ is a bijection. Note that $\mathcal{Q}$ therefore contains faces of $Z(n, n)$ of dimension at most $\delta+1$. We call these $(\delta + 1)$-dimensional faces of $\mathcal{Q}$ the \emph{cubes} of the cubillage. In the literature, cubillages are often called \emph{fine zonotopal tilings}---for example, in \cite{gp}.

After \cite[Theorem 4.4]{kv-poly} and \cite[Theorem 2.1, Proposition 2.1]{thomas-bst} one may define the \emph{higher Bruhat poset} $\mathcal{B}(n,\delta+1)$ as follows. The elements of $\mathcal{B}(n,\delta+1)$ consist of cubillages of $Z(n,\delta+1)$. The covering relations of $\mathcal{B}(n,\delta+1)$ are given by pairs of cubillages $\mathcal{Q} \lessdot \mathcal{Q}'$ where there is a $(\delta + 2)$-face $\Gamma$ of $Z(n, n)$ such that $\mathcal{Q} \setminus \Gamma = \mathcal{Q}' \setminus \Gamma$ and $\pi_{n, \delta + 2}(\mathcal{Q})$ contains the lower facets of $\pi_{n, \delta + 2}(\Gamma)$, whereas $\pi_{n, \delta + 2}(\mathcal{Q}')$ contains the upper facets of $\pi_{n, \delta + 2}(\Gamma)$. Here we say that $\mathcal{Q}'$ is an \emph{increasing flip} of $\mathcal{Q}$.

The cyclic zonotope $Z(n,\delta+1)$ possesses two canonical cubillages, one given by the subcomplex $\mathcal{Q}_{l}$ of $Z(n, n)$ such that $\pi_{n, \delta + 2}(\mathcal{Q}_{l})$ consists of the lower facets of $Z(n,\delta+2)$, which we call the \emph{lower cubillage}, and the other given by the subcomplex $\mathcal{Q}_{u}$ of $Z(n, n)$ such that $\pi_{n, \delta + 2}(\mathcal{Q}_{u})$ consists of the upper facets of $Z(n,\delta+2)$, which we call the \emph{upper cubillage}. The lower cubillage of $Z(n,\delta+1)$ gives the unique minimum of the poset $\mathcal{B}(n,\delta+1)$, and the upper cubillage gives the unique maximum.

\subsubsection{Separated collections}

We now explain how one may characterise cubillages as separated collections of subsets, as shown in \cite{gp}.

The subsets $E \subseteq [n]$ are naturally identified with the corresponding points $\xi_{E}:=\sum_{e \in E}\xi_{e}$ in $Z(n, n)$, where $\xi_{\emptyset} := \mathbf{0}$. This represents each vertex of a cubillage $\mathcal{Q}$ as a subset of $[n]$. For a cubillage $\mathcal{Q}$ of $Z(n,\delta+1)$, the collection of subsets corresponding to its vertices is called the \emph{spectrum} of $\mathcal{Q}$ and is denoted by $\mathrm{Sp}(\mathcal{Q})$. Each cube in $\mathcal{Q}$ is viewed as the Minkowski sum of line segments \[\overline{\xi_{E}\xi_{E \cup \{a_{i}\}}}\] for some set $A$ with $\# A = \delta + 1$ and $E\subseteq [n]\setminus A $. Here we call $\xi_{E}$ the \emph{initial vertex} of the cube, $\xi_{E \cup A}$ the \emph{final vertex}, and $A$ the set of \emph{generating vectors}.

We say that, given two sets $A,B \subseteq [n]$, $A$ \emph{$\delta$-interweaves} $B$ if there exist $i_{\delta+1}, i_{\delta-1}, \ldots \in B \setminus A$ and $i_{\delta}, i_{\delta-2}, \ldots \in A \setminus B$ such that \[i_{0}<i_{1}<\dots< i_{\delta+1}.\] We also say that $\{i_{\delta+1},i_{\delta-1}, \dots\}$ and $\{i_{\delta},i_{\delta-2}, \dots\}$ \emph{witness} that $A$ $\delta$-interweaves $B$. If either $A$ $\delta$-interweaves $B$ or $B$ $\delta$-interweaves $A$, then we say that $A$ and $B$ are \emph{$\delta$-interweaving}. If $A$ $\delta$-interweaves $B$ as above and $B \setminus A = \{i_{\delta+1}, i_{\delta-1}, \ldots\}$ and $A \setminus B = \{i_{\delta}, i_{\delta-2}, \ldots\}$, then we say that \emph{$A$ tightly $\delta$-interweaves $B$}, in the manner of \cite{bbge}. If $A$ and $B$ are not $\delta$-interweaving then we say that $A$ and $B$ are \emph{$\delta$-separated}, following \cite{gp,dkk}. We call a collection $\mathcal{C}\subseteq 2^{[n]}$ $\delta$-separated if it is pairwise $\delta$-separated.

If $\delta = 2d$, then being $\delta$-interweaving is the same as being $(d + 1)$-interlacing in the terminology of \cite{bbge} and $(d + 1)$-intertwining in the terminology of \cite{njw-jm}. We choose new terminology because we wish to have an opposite of $\delta$-separated for $\delta$ odd as well as $\delta$ even. 

It follows from \cite[Theorem 2.7]{gp} that the correspondence $\mathcal{Q} \mapsto \mathrm{Sp}(\mathcal{Q})$ gives a bijection between the set of cubillages on $Z(n,\delta+1)$ and the set of $\delta$-separated collections of maximal size in $2^{[n]}$. In particular, for any cubillage $\mathcal{Q}$ of $Z(n,\delta+1)$, we have that $\# \mathrm{Sp}(\mathcal{Q}) = \Sigma_{i=0}^{\delta+1}\binom{n}{i}$, which is the maximal size of a $\delta$-separated collection in $2^{[n]}$.

For $A \subseteq [n]$, if $\pi_{n, \delta + 1}(\xi_{A})$ is a boundary vertex of the zonotope $Z(n,\delta+1)$, then $\xi_{A}$ is a vertex of every cubillage of $Z(n, \delta + 1)$, and hence $A$ is in every $\delta$-separated collection in $2^{[n]}$ of maximal size. Moreover, the subsets $A \subseteq [n]$ such that $\pi_{n, \delta + 1}(\xi_{A})$ is a boundary vertex of the zonotope $Z(n,\delta+1)$ are precisely those subsets which are $\delta$-separated from every other subset of $[n]$. Hence the subsets of interest are those which project to the interior of the zonotope $Z(n,\delta+1)$. The vertices of the zonotope $Z(n,\delta+1)$ are known to be in bijection with the number of regions of the arrangement of $(\delta-1)$-spheres associated with the set of points $\Xi = \{\xi_{1}, \dots, \xi_{n}\}$ on the Veronese curve, see \cite[Proposition 2.2.2]{blswz}. Since no set of $\delta$ points of $\Xi$ lie in a linear hyperplane, the number of regions of this arrangement of $(\delta - 1)$-spheres is the maximal number of \[\binom{n-1}{\delta} + \sum_{i=0}^{\delta}\binom{n}{i}.\] (For instance, see \cite[Problem 4, p.73]{comtet}.) Hence a cubillage $\mathcal{Q}$ of $Z(n,\delta+1)$ has \[\sum_{i=0}^{\delta+1}\binom{n}{i}-\left(\binom{n-1}{\delta} + \sum_{i=0}^{\delta}\binom{n}{i}\right)=\binom{n-1}{\delta+1}\] vertices which project to the interior of $Z(n,\delta+1)$ if $n > \delta + 1$, and $0$ otherwise. We call a point $\xi_{A} \in \mathbb{R}^{n}$ an \emph{internal point in $Z(n, \delta + 1)$} if $\pi_{n, \delta + 1}(\xi_{A})$ lies in the interior of $Z(n, \delta + 1)$. We call a vertex $\xi_{A}$ of a cubillage $\mathcal{Q} \subset \mathbb{R}^{n}$ of $Z(n, \delta + 1)$ \emph{internal} if $\xi_{A}$ is an internal point in $Z(n,\delta + 1)$. Given a cubillage $\mathcal{Q}$ of $Z(n, \delta + 1)$, we define its \emph{internal spectrum} $\intsp(\mathcal{Q})$ to consist of the elements of $\mathrm{Sp}(\mathcal{Q})$ which correspond to internal vertices of $\mathcal{Q}$.

By \cite[(2.7)]{dkk}, $\xi_{A}$ is an internal point in $Z(n,\delta+1)$ if and only if
\begin{itemize}
\item $\delta = 2d$ and $A$ is a cyclic $l$-ple interval for $l \geqslant d + 1$, or
\item $\delta = 2d + 1$ and $A$ is an $l$-ple interval for $l \geqslant d + 2$, or a $(d + 1)$-ple interval containing neither $1$ nor $n$.
\end{itemize}

We will also need the following concepts from \cite{dkk-interrelations}. Given a cubillage $\mathcal{Q}$ of $Z(n, \delta +1)$ and a subcomplex $\mathcal{M}$ of $\mathcal{Q}$, we say that $\mathcal{M}$ is a \emph{membrane} in $\mathcal{Q}$ if $\mathcal{M}$ is a cubillage of $Z(n, \delta)$. We say that an edge in a cubillage $\mathcal{Q}$ from $\xi_{E}$ to $\xi_{E \cup \{i\}}$ is an edge \emph{of colour $i$}, where $E \subseteq [n] \setminus \{i\}$ is any subset. For a cubillage $\mathcal{Q}$ of $Z(n, \delta + 1)$ and $i \in [n]$, we define the \emph{$i$-pie} $\Pi_{i}(\mathcal{Q})$ to be the subcomplex of $\mathcal{Q}$ given by all the cubes which have an edge of colour $i$. In \cite[Chapter 7]{ziegler}, the $i$-pie is called the \emph{$i$-th zone}.

By \cite{dkk,gp}, we can obtain a cubillage $\mathcal{Q}/i$ from $\mathcal{Q}$ by contracting the edges of colour $i$ until they have length zero. The cubillage $\mathcal{Q}/i$ is known as the $i$-contraction of $\mathcal{Q}$. The image of the $n$-pie $\Pi_{n}(\mathcal{Q})$ is a membrane in $\mathcal{Q}/n$, but this is not in general true for $1 < i < n$, by \cite[(4.4)]{dkk}. An example of $4$-contraction is shown in Figure~\ref{fig:ipie}. Here the 4-pie is shown in red on the left-hand cubillage, and this is contracted to zero in the right-hand cubillage, where its image is a membrane. Note that here we are illustrating cubillages of $Z(4, 2)$ and $Z(3, 2)$ by their images under the projection maps $\pi_{4, 2}$ and $\pi_{3, 2}$ respectively. We will always illustrate cubillages in this way.

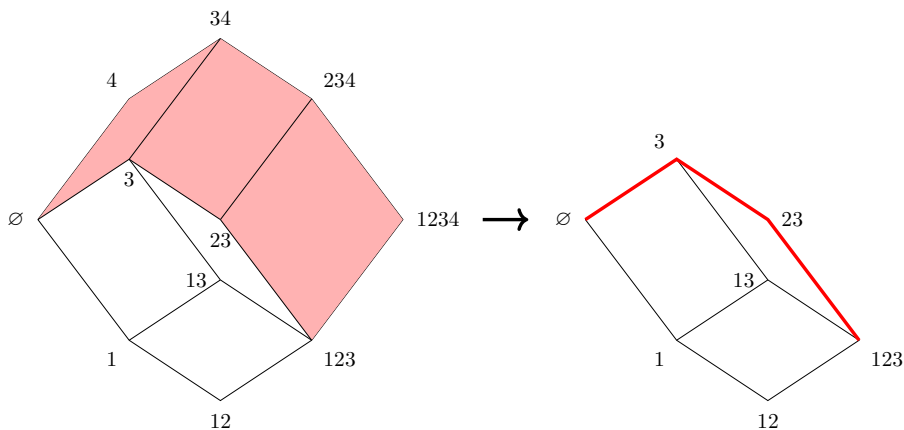
\begin{figure}
\caption{$4$-contraction}\label{fig:ipie}
\[
\scalebox{0.8}{
\begin{tikzpicture}

\begin{scope}[xscale=0.75]


\coordinate(0) at (0,0);
\node at (0)[left = 1mm of 0]{$\emptyset$};
\coordinate(1) at (2,-2);
\node at (1)[below left = 1mm of 1]{1};
\coordinate(12) at (4,-3);
\node at (12)[below = 1mm of 12]{12};
\coordinate(123) at (6,-2);
\node at (123)[below right = 1mm of 123]{123};
\coordinate(1234) at (8,0);
\node at (1234)[right = 1mm of 1234]{1234};
\coordinate(234) at (6,2);
\node at (234)[above right = 1mm of 234]{234};
\coordinate(34) at (4,3);
\node at (34)[above = 1mm of 34]{34};
\coordinate(4) at (2,2);
\node at (4)[above left = 1mm of 4]{4};


\draw (0) -- (1) -- (12) -- (123) -- (1234) -- (234) -- (34) -- (4) -- (0);


\coordinate(3) at (2,1);
\coordinate(13) at (4,-1);
\coordinate(23) at (4,0);


\draw[fill=red!30,draw=none] (0) -- (4) -- (34) -- (3) -- (0);
\draw[fill=red!30,draw=none] (3) -- (34) -- (234) -- (23) -- (3);
\draw[fill=red!30,draw=none] (23) -- (234) -- (1234) -- (123) -- (23);


\draw (0) -- (3);
\draw (3) -- (34);
\draw (3) -- (23);
\draw (3) -- (13);
\draw (1) -- (13);
\draw (13) -- (123);
\draw (23) -- (123);
\draw (23) -- (234);


\node at (3) [below = 1mm of 3]{3};
\node at (13) [left = 1mm of 13]{13};
\node at (23) [below = 1mm of 23]{23};



\draw[->,ultra thick] (9.75, 0) -- (10.75, 0);


\coordinate(0) at (12,0);
\node at (0)[left = 1mm of 0]{$\emptyset$};
\coordinate(1) at (14,-2);
\node at (1)[below left = 1mm of 1]{1};
\coordinate(12) at (16,-3);
\node at (12)[below = 1mm of 12]{12};
\coordinate(123) at (18,-2);
\node at (123)[below right = 1mm of 123]{123};


\coordinate(3) at (14,1);
\coordinate(13) at (16,-1);
\coordinate(23) at (16,0);


\draw (0) -- (1) -- (12) -- (123) -- (23) -- (3) -- (0);



\draw (3) -- (13);
\draw (1) -- (13);
\draw (13) -- (123);


\node at (3) [above left = 1mm of 3]{3};
\node at (13) [left = 1mm of 13]{13};
\node at (23) [right = 1mm of 23]{23};


\draw[red, ultra thick] (0) -- (3) -- (23) -- (123);

\end{scope}

\end{tikzpicture}
}
\]
\end{figure}

\subsubsection{Admissible orders}\label{sect:admissible_orders}

The original definition of the higher Bruhat orders from \cite{ms} is as follows. Given $A \in \binom{[n]}{\delta+2}$, the set \[P(A)=\left\lbrace B \mathrel{\Big|} B \in \binom{[n]}{\delta+1}, B \subset A \right\rbrace \] is called the \emph{packet} of $A$. The set $P(A)$ is naturally ordered by the \emph{lexicographic order}, where $P(A)\setminus a_{i} < P(A) \setminus a_{j}$ if and only if $j < i$.

An ordering $\alpha$ of $\binom{[n]}{\delta+1}$ is \emph{admissible} if the elements of any packet appear in either lexicographic or reverse-lexicographic order under $\alpha$. Two orderings $\alpha$ and $\alpha'$ of $\binom{[n]}{\delta + 1}$ are \emph{equivalent} if they differ by a sequence of interchanges of pairs of adjacent elements that do not lie in a common packet. Note that these interchanges preserve admissibility. We use $[\alpha]$ to denote the equivalence class of $\alpha$.

The \emph{inversion set} $\mathrm{inv}(\alpha)$ of an admissible order $\alpha$ is the set of all $(\delta+2)$-subsets of $[n]$ whose packets appear in reverse-lexicographic order in $\alpha$. Note that inversion sets are well-defined on equivalence classes of admissible orders.

The higher Bruhat poset $\mathcal{B}(n,\delta+1)$ is the partial order on equivalence classes of admissible orders of $\binom{[n]}{\delta+1}$ where $[\alpha] \lessdot [\alpha']$ if $\mathrm{inv}(\alpha')=\mathrm{inv}(\alpha) \cup \{ A\}$ for $A \in \binom{[n]}{\delta+2}\setminus\mathrm{inv}(\alpha)$.

One can explain the bijection between cubillages of $Z(n,\delta+1)$ and admissible orders on $\binom{[n]}{\delta+1}$. Let $\mathcal{Q}$ be a cubillage of $Z(n,\delta+1)$ corresponding to an equivalence class $[\alpha]$ of admissible orders on $\binom{[n]}{\delta+1}$. It follows from \cite{thomas-bst} that the cubes of $\mathcal{Q}$ are in bijection with the elements of $\binom{[n]}{\delta+1}$ via sending a cube to its set of generating vectors. A packet which can be inverted corresponds to a set of lower facets of $\pi_{n, \delta + 2}(\Gamma)$, where $\Gamma$ is a $(\delta+2)$-face $\Gamma$ of $Z(n, n)$. Inverting the packet corresponds to an increasing flip: exchanging the lower facets of $\pi_{n, \delta + 2}(\Gamma)$ for its upper facets.

Hence, a cubillage $\mathcal{Q}$ of $Z(n,\delta+1)$ is determined once, for every element of $\binom{[n]}{\delta + 1}$, one knows the initial vertex of the cube with that set of generating vectors. Let $\alpha$ be an admissible order of $\binom{[n]}{\delta+1}$ corresponding to a cubillage $\mathcal{Q}$ of $Z(n,\delta+1)$ and let $\Delta$ be the cube of $\mathcal{Q}$ with set of generating vectors $I$ and initial vertex $\xi_{E}$. Then, given $e \in [n]\setminus I$, we have that $e \in E$ if and only if either
\begin{itemize}
\item $I \cup \{e\} \notin \mathrm{inv}(\alpha)$ and $e$ is an odd gap in $I$, or
\item $I \cup \{e\} \in \mathrm{inv}(\alpha)$ and $e$ is an even gap in $I$.
\end{itemize}
This follows from \cite[Theorem 2.1]{thomas-bst} if one swaps the sign convention for $\delta + 1$ odd. This makes the statement simpler and reveals connections with the paper \cite{dm-h}, as we explain in Section~\ref{sect:vis}. An analogous statement was shown for more general zonotopes in \cite[Lemma 5.13]{gpw}.

Conversely, given a cubillage $\mathcal{Q}$ of $Z(n, \delta + 1)$, one can determine an equivalence class of admissible orders of $\binom{[n]}{\delta + 1}$. Define a partial order on the cubes of the cubillage $\mathcal{Q}$ by $\Delta \lessdot \Delta'$ if $\pi_{n, 2d + 1}(\Delta) \cap \pi_{n, 2d + 1}(\Delta')$ is an upper facet of $\pi_{n, 2d + 1}(\Delta)$ and a lower facet of $\pi_{n, 2d + 1}(\Delta')$. The linear extensions of this partial order then comprise the admissible orders in the equivalence class $[\alpha]$ corresponding to $\mathcal{Q}$, by \cite[Lemma 2.2]{ziegler-bruhat} and \cite{ms,thomas-bst}.

\subsection{Higher Stasheff--Tamari orders}\label{sect-hst}

In this section we give the definition of the first higher Stasheff--Tamari orders. These were originally defined by Kapranov and Voevodsky under the name the \emph{higher Stasheff orders} in the context of higher category theory \cite[Definition 3.3]{kv-poly}. This was built upon by Edelman and Reiner, who introduced the \emph{first} and \emph{second higher Stasheff--Tamari orders} in \cite{er}. Thomas later proved that the first higher Stasheff--Tamari orders were the same as the higher Stasheff orders of Kapranov and Voevodsky \cite[Proposition 3.3]{thomas-bst}. The definition of the first higher Stasheff--Tamari orders is similar in style to the geometric definition of the higher Bruhat orders using cubillages.

The \emph{moment curve} is defined by $p_{t}=(t, t^{2}, \dots , t^{\delta}) \subseteq \mathbb{R}^{\delta}$ for $t \in \mathbb{R}$. Choose $t_{1}, \dots , t_{n} \in \mathbb{R}$ such that $t_{1} < t_{2} < \dots < t_{n}$ and $n \geqslant \delta + 1$. The \emph{cyclic polytope} $C(n, \delta)$ is defined to be the convex polytope $\mathrm{conv}(p_{t_{1}}, \dots , p_{t_{n}})$. The properties of the cyclic polytope do not depend on the exact choice of $\{t_{1}, \dots, t_{n}\} \subset \mathbb{R}$. Hence, for ease we set $t_{i}=i$.

A \emph{triangulation} of the cyclic polytope $C(n, \delta)$ is a subcomplex $\mathcal{T}$ of $C(n, n - 1)$ such that $\pi_{n-1, \delta} \colon \mathcal{T} \to C(n, \delta)$ is a bijection. After \cite{kv-poly, thomas-bst}, we define the \emph{first higher Stasheff--Tamari poset} $\mathcal{S}(n,\delta)$ as follows. The elements of $\mathcal{S}(n,\delta)$ are triangulations of $C(n,\delta)$. The covering relations of $\mathcal{S}(n,\delta)$ are given by pairs of triangulations $\mathcal{T} \lessdot \mathcal{T}'$ where there is a $(\delta + 1)$-face $\Sigma$ of $C(n, n - 1)$ such that $\mathcal{T}\setminus \Sigma = \mathcal{T}' \setminus \Sigma$ and $\pi_{n - 1, \delta + 1}(\mathcal{T})$ contains the lower facets of $\pi_{n - 1, \delta + 1}(\Sigma)$, whereas $\pi_{n - 1, \delta + 1}(\mathcal{T}')$ contains the upper facets of $\pi_{n - 1, \delta + 1}(\Sigma)$. Here we say that $\mathcal{T}'$ is an \emph{increasing flip} of $\mathcal{T}$.

The cyclic polytope $C(n,\delta)$ possesses two canonical triangulations, one given by the subcomplex $\mathcal{T}_{l}$ of $C(n, n - 1)$ such that $\pi_{n - 1, \delta + 1}(\mathcal{T}_{l})$ consists of the lower facets of $C(n, \delta+1)$, known as the \emph{lower triangulation}, and the other given by the subcomplex $\mathcal{T}_{u}$ of $C(n, n - 1)$ such that $\pi_{n - 1, \delta + 1}(\mathcal{T}')$ consists of the upper facets of $C(n, \delta+1)$, known as the \emph{upper triangulation}. The lower triangulation of $C(n,\delta)$ gives the unique minimum of the poset $\mathcal{S}(n,\delta)$ and the upper triangulation gives the unique maximum.

Given a subset $A \subseteq [n]$ with $\# A = k + 1$, we write $|A| := \mathrm{conv}(p_{a_{0}}, \dots, p_{a_{k}})$ for its geometric realisation as a simplex in $\mathbb{R}^{n - 1}$. One may combinatorially describe the lower facets and upper facets of $C(n, \delta)$, and hence the lower and upper triangulations of $C(n, \delta - 1)$. Gale's Evenness Criterion \cite[Theorem 3]{gale}\cite[Lemma 2.3]{er} states that, for $F \subseteq [n]$ with $\# F = \delta$, we have that $\pi_{n - 1, \delta}|F|$ is an upper facet of $C(n, \delta)$ if and only if $F$ is an odd subset, and that $\pi_{n - 1, \delta}|F|$ is a lower face of $C(n, \delta)$ if and only if $F$ is an even subset. We remove excess brackets, so that here $\pi_{n - 1, \delta}|F| = \pi_{n - 1, \delta}(|F|)$. 

We call a $\floor{\delta/2}$-simplex $|A| \subset \mathbb{R}^{n - 1}$ \emph{internal in} $C(n, \delta)$ if $\pi_{n - 1, \delta}|A|$ does not lie within a facet of $C(n,\delta)$. A $\floor{\delta/2}$-simplex $|A|$ of a triangulation $\mathcal{T} \subset \mathbb{R}^{n - 1}$ of $C(n,\delta)$ is an \emph{internal $\floor{\delta/2}$-simplex} if it is internal in $C(n, \delta)$. It is clear that a triangulation of a convex polygon is determined by the arcs of the triangulation; similarly, a triangulation of $C(n,\delta)$ is determined by the internal $\floor{\delta/2}$-simplices of the triangulation, by a theorem of Dey \cite{dey}. Hence, for a triangulation $\mathcal{T}$ of $C(n,\delta)$ we denote by \[\simp(\mathcal{T}) := \left\lbrace A \in \binom{[n]}{\floor{\delta/2} + 1} \mathrel{\Big|} |A| \text{ is an internal }\floor{\delta/2}\text{-simplex of }\mathcal{T}\right\rbrace.\]

By \cite[Lemma 2.1]{ot} and \cite[Lemma 4.2]{njw-hst}, given $A \in \binom{[n]}{\floor{\delta/2} + 1}$, we have that $|A|$ is an internal $\floor{\delta/2}$-simplex in $C(n,\delta)$ if
\begin{itemize}
\item $\delta = 2d$ and $A$ is a cyclic $(d + 1)$-ple interval, or
\item $\delta = 2d + 1$ and $A$ is a $(d + 1)$-ple interval containing neither $1$ nor $n$.
\end{itemize}

\begin{observation}\label{obs:internal}
Given $A \in \binom{[n]}{\floor{\delta/2} + 1}$, we have that $|A|$ is an internal $\floor{\delta/2}$-simplex in $C(n, \delta)$ if and only if $\xi_{A}$ is an internal point in $Z(n, \delta + 1)$.
\end{observation}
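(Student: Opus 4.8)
The plan is to prove the equivalence by directly comparing the two combinatorial criteria recalled immediately before the statement: the criterion of \cite[(2.7)]{dkk} for $\xi_{A}$ to be an internal point in $Z(n,\delta+1)$, and the criterion of \cite[Lemma 2.1]{ot} and \cite[Lemma 4.2]{njw-hst} for $|A|$ to be an internal $\floor{\delta/2}$-simplex in $C(n,\delta)$. The essential point is that under the standing hypothesis $\#A = \floor{\delta/2}+1$ both criteria collapse to one and the same condition on $A$. The only input beyond the two quoted criteria is the elementary observation that an $m$-element subset of $[n]$ is a union of at most $m$ intervals, and likewise a union of at most $m$ cyclic intervals; consequently such a set is an $l$-ple interval (respectively a cyclic $l$-ple interval) for a unique $l$ with $1 \leqslant l \leqslant m$, and in particular is never an $l$-ple interval (respectively a cyclic $l$-ple interval) for $l > m$.

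I would then split according to the parity of $\delta$. Suppose first $\delta = 2d$, so that $\#A = d+1$. By the remark above, $A$ is a cyclic $l$-ple interval for a unique $l \leqslant d+1$, so the condition ``$A$ is a cyclic $l$-ple interval for some $l \geqslant d + 1$'' from \cite{dkk} is equivalent to ``$A$ is a cyclic $(d+1)$-ple interval'', which is exactly the condition from \cite{ot,njw-hst} for $|A|$ to be internal in this size range. Now suppose $\delta = 2d+1$, so that again $\#A = d+1$. Then $A$ is never an $l$-ple interval with $l \geqslant d + 2$, so the first alternative in the criterion of \cite{dkk} is vacuous and that criterion reduces to ``$A$ is a $(d+1)$-ple interval containing neither $1$ nor $n$'', which once more coincides with the condition of \cite{ot,njw-hst}. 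In either parity the condition characterising internal points $\xi_{A}$ of $Z(n,\delta+1)$ and the condition characterising internal simplices $|A|$ of $C(n,\delta)$ agree, which yields the claimed equivalence.

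There is essentially no genuine obstacle here; the one point requiring a little care is that the comparison uses the criterion of \cite{ot,njw-hst} as a characterisation of internal simplices, i.e.\ in both directions, and not merely the one implication as quoted. For the forward direction of the Observation only the quoted implication is needed: if $\xi_{A}$ is internal then, by \cite{dkk} together with the size hypothesis, $A$ satisfies the displayed combinatorial condition, whence $|A|$ is internal by \cite{ot,njw-hst}. For the reverse direction one argues contrapositively, using that the criterion of \cite{ot,njw-hst} is sharp: if $\xi_{A}$ is not internal then $A$ fails the combinatorial condition, and the analysis of which $(\floor{\delta/2}+1)$-subsets extend to even or odd $\delta$-subsets (via Gale's Evenness Criterion, as carried out in \cite{ot,njw-hst}) shows that $\pi_{n-1,\delta}|A|$ then lies in a facet of $C(n,\delta)$, so $|A|$ is not internal. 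Since all of this is already available from the cited results, the proof amounts to the bookkeeping comparison described above.
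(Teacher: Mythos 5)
Your proof is correct and matches the paper's intended justification: the Observation is stated without explicit proof precisely because, for $\#A = \lfloor\delta/2\rfloor + 1 = d+1$, the zonotope criterion of \cite[(2.7)]{dkk} and the simplex criterion of \cite{ot,njw-hst} collapse to the same condition on $A$, since a $(d+1)$-element set cannot be an $l$-ple (cyclic) interval for $l > d+1$. Your careful note that the simplex criterion must be used as a characterisation (not merely the one implication quoted) is a valid and worthwhile clarification, and is indeed what the cited lemmas provide.
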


A \emph{circuit} of a cyclic polytope $C(n,\delta)$ is a pair of disjoint subsets $X, Y \subseteq [n]$ which are inclusion-minimal with the property that $\pi_{n - 1, \delta}|X| \cap \pi_{n - 1, \delta}|Y| \neq \emptyset$. If $A, B \subseteq [n]$ are such that $A \supseteq X$ and $B \supseteq Y$ where $(X, Y)$ is a circuit of $C(n, \delta)$, then we say that $\pi_{n - 1, \delta}|A|$ and $\pi_{n - 1, \delta}|B|$ intersect \emph{transversely} in $C(n, \delta)$. By \cite{breen}, the circuits of $C(n,\delta)$ are the pairs $(X, Y)$ and $(Y, X)$ such that $\#X = \floor{\delta/2}+1$, $\#Y = \ceil{\delta/2}+1$, and $X$ $\delta$-interweaves $Y$. This also follows from the description of the oriented matroid given by a cyclic polytope \cite{b-lv,sturmfels,cd}. We will later use the fact that if $|A|$ and $|B|$ are simplices in the same triangulation, then there is no circuit $(X, Y)$ such that $A \supseteq X$ and $B \supseteq Y$ \cite[Proposition 2.2]{rambau}.

\section{Interpretations}\label{sect:g_interpretations}

In this section we study the map \[g\colon\mathcal{B}(n,\delta+1) \rightarrow \mathcal{S}(n,\delta).\] We give three different interpretations of this map, corresponding to the three different ways of defining the higher Bruhat orders.

\subsection{Cubillages}\label{sect:g_geom}

Here we give our principal definition of the map $g$. This definition is geometric and uses the interpretation of $\mathcal{B}(n,\delta+1)$ in terms of cubillages. This was how the map was considered in \cite[Appendix B]{dkk-survey}, where Lemma~\ref{lem:vertexFigTriang} and Proposition~\ref{prop:covRel} were both noted.

\begin{lemma}\label{lem:vertexFigTriang}
If $\mathcal{Q}$ is a cubillage of $Z(n,\delta+1)$, then the vertex figure of $\mathcal{Q}$ at $\xi_{\emptyset}$ gives a triangulation of $C(n, \delta)$.
\end{lemma}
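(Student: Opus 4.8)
\emph{Plan.} The plan is to compute the vertex figure explicitly and recognise it as a triangulation of $C(n,\delta)$, using the fact that, after an affine change of coordinates, the vertex figure of the \emph{ambient} zonotope $Z(n,\delta+1)$ at the origin is precisely the cyclic polytope $C(n,\delta)$. Write $\pi := \pi_{n,\delta+1}$ and $\xi_i' := \pi(\xi_i) = (1,i,\dots,i^{\delta})$, so $Z(n,\delta+1) = \sum_{i=1}^{n}\overline{\mathbf{0}\,\xi_i'}$ and the tangent cone of $Z(n,\delta+1)$ at $\mathbf{0}$ equals $\mathrm{cone}(\xi_1',\dots,\xi_n')$; since the $\xi_i'$ lie on the affine hyperplane $\{x_1 = 1\}$, on which deleting the first coordinate carries them to the moment-curve points $(i,\dots,i^{\delta})$, this tangent cone is $\mathbb{R}_{\geqslant 0}\cdot C(n,\delta)$. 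I would realise the vertex figure of $\mathcal{Q}$ at $\xi_\emptyset = \mathbf{0}$ (a vertex of every cubillage, since $\pi(\xi_\emptyset) = \mathbf{0}$ is a boundary vertex of $Z(n,\delta+1)$) as the intersection of the star of $\xi_\emptyset$ in $\mathcal{Q}$ with the transverse hyperplane $\{x_1 = 1\}$, and then transport it into $\mathbb{R}^{n-1}$ by deleting the constant first coordinate; call the resulting complex $\mathcal{T}$.

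The first step is to identify the cells of $\mathcal{T}$. A cube of $\mathcal{Q}$ with generating vectors $A$ and initial vertex $\xi_E$ has vertex set $\{\xi_{E \cup S} : S \subseteq A\}$, so it contains $\xi_\emptyset$ exactly when $E = \emptyset$, in which case $\xi_\emptyset$ is its initial vertex; its tangent cone at $\xi_\emptyset$ is then the simplicial cone $\mathrm{cone}(\xi_a : a \in A)$ (the $\xi_a$ being linearly independent by a Vandermonde argument), which meets $\{x_1 = 1\}$ in $\mathrm{conv}(\xi_a : a \in A)$ and hence, after deleting the first coordinate, in the $\delta$-simplex $|A| = \mathrm{conv}(p_a : a \in A)$. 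Lower-dimensional faces of $\mathcal{Q}$ through $\xi_\emptyset$ contribute the faces $|A'|$, $A' \subseteq A$, of these simplices. Thus $\mathcal{T}$ is the simplicial complex whose maximal cells are the $|A|$ with $A$ the generating set of a cube of $\mathcal{Q}$ having initial vertex $\xi_\emptyset$. Since $C(n,n-1) = \mathrm{conv}(p_1,\dots,p_n)$ is an $(n-1)$-simplex (being the convex hull of $n$ affinely independent points of $\mathbb{R}^{n-1}$), every such $|A|$ is a face of $C(n,n-1)$, so $\mathcal{T}$ is a subcomplex of $C(n,n-1)$.

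It then remains to check that $\pi_{n-1,\delta}$ restricts to a bijection $\mathcal{T} \to C(n,\delta)$. The images $\pi(\Delta)$ of the cubes $\Delta$ of $\mathcal{Q}$ form a polytopal subdivision of $Z(n,\delta+1)$ (the subdivision being transported along the bijection $\pi|_{\mathcal{Q}}$), and one localises this subdivision at $\mathbf{0}$: the only tiles meeting a small enough neighbourhood of $\mathbf{0}$ are those containing $\mathbf{0}$, each of these agrees near $\mathbf{0}$ with its tangent cone, and so the simplicial cones $\mathrm{cone}(\xi_a' : a \in A)$ --- over those $A$ generating a cube through $\xi_\emptyset$ --- subdivide $\mathrm{cone}(\xi_1',\dots,\xi_n') = \mathbb{R}_{\geqslant 0}\cdot C(n,\delta)$. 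Slicing by $\{x_1 = 1\}$ and deleting the first coordinate turns this into a subdivision of $C(n,\delta)$ by the simplices $\pi_{n-1,\delta}|A|$; since $\pi_{n-1,\delta}$ sends $p_a \mapsto (a,\dots,a^{\delta})$ and is injective on each cell $|A|$ of $\mathcal{T}$ (again because $\delta + 1$ moment-curve points are affinely independent), and the simplices $\pi_{n-1,\delta}|A|$ form a subdivision of $C(n,\delta)$, the map $\pi_{n-1,\delta}|_{\mathcal{T}}$ is a bijection onto $C(n,\delta)$. Combining this with the previous paragraph, $\mathcal{T}$ is the desired triangulation. The step I expect to be the main obstacle is justifying this localisation at $\mathbf{0}$: one must argue carefully that taking tangent cones really does convert the cubillage, near $\mathbf{0}$, into a genuine subdivision of $\mathbb{R}_{\geqslant 0}\cdot C(n,\delta)$ (cones with pairwise disjoint interiors meeting in common faces), rather than a mere dissection; once this is secured, the remainder is bookkeeping with Vandermonde matrices and coordinate projections.
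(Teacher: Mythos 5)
Your proof is correct, and it arrives at the same description of the cells of the vertex figure as the paper does, but the key step---showing that the slice is in fact a triangulation, i.e.\ that $\pi_{n-1,\delta}$ restricts to a bijection onto $C(n,\delta)$---is argued by a genuinely different route. The paper simply observes that $\pi_{n,\delta+1}$ preserves the first coordinate, so the global bijection $\pi_{n,\delta+1}\colon\mathcal{Q}\to Z(n,\delta+1)$ from the definition of a cubillage restricts immediately to a bijection $\mathcal{Q}\cap H_n \to Z(n,\delta+1)\cap H_{\delta+1} = C(n,\delta)$; no localisation, no tangent cones. You instead localise the projected subdivision at $\mathbf{0}$, pass to tangent cones, argue these cones subdivide $\mathbb{R}_{\geqslant 0}\cdot C(n,\delta)$, and slice by $\{x_1=1\}$. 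This works and is the classical, textbook way to think about vertex figures of polytopal complexes, but it rests on the auxiliary fact (which you correctly flag as the delicate point) that for a polytopal subdivision of a polytope $P$ the tangent cones at a vertex $v$ of the cells through $v$ form a conical subdivision of the tangent cone of $P$ at $v$; that is standard, so this is not a genuine gap, but it is an extra ingredient the paper's argument avoids entirely. In exchange, your proof is more explicit about the cell structure of $\mathcal{T}$ (identifying its maximal cells as the $|A|$ for $A$ generating a cube with initial vertex $\xi_\emptyset$, and verifying that lower-dimensional faces of $\mathcal{Q}$ through $\xi_\emptyset$ contribute only faces of these simplices), a point the paper leaves implicit when it asserts that $\mathcal{Q}\cap H_n$ is a subcomplex of $C(n,n-1)$. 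In short: the paper's proof is sharper and more self-contained; yours is more conceptually conventional and more detailed on the combinatorics of the cells.
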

\begin{proof}
Let $H_{k}$ denote the affine hyperplane \[H_{k}:=\{(x_{1}, \dots, x_{k}) \in \mathbb{R}^{k} \mid x_{1}=1\}.\] The vertex figure of the zonotope $Z(n,k)$ at the vertex $\xi_{\emptyset}$ can be given by the intersection $Z(n, k) \cap H_{k}$. It is clear from the definitions of $Z(n, k)$ and $C(n, k)$ that this intersection is the cyclic polytope $C(n,k)$. The vertex figure of the cubillage $\mathcal{Q}$ of $Z(n,\delta+1)$ at $\xi_{\emptyset}$ then induces a subcomplex $\mathcal{T} = \mathcal{Q} \cap H_{n}$ of $C(n, n - 1)$. This subcomplex $\mathcal{T}$ is a triangulation of $C(n, \delta)$ because we have that $\pi_{n, \delta + 1} \colon \mathcal{Q} \to Z(n, \delta + 1)$ is a bijection, which then restricts to a bijection from $\mathcal{Q} \cap H_{n} = \mathcal{T}$ to $Z(n, \delta + 1) \cap H_{\delta + 1} = C(n, \delta)$.
\end{proof}

Hence we define the map
\begin{align*}
g\colon \mathcal{B}(n,\delta+1)&\rightarrow\mathcal{S}(n,\delta) \\
\mathcal{Q} &\mapsto \mathcal{Q} \cap H_{n}.
\end{align*}
For the purposes of this paper, this is the definition of the map $g$, and the characterisations in Section~\ref{sect:g_comb} and Section~\ref{sect:vis} are simply other interpretations.

\begin{remark}
The intersections of cubillages with the hyperplanes given by $x_{1} = l$ for $l \in [n - 1]$ have been the objects of significant study in the literature. For three-dimensional zonotopes, such cross-sections are dual to plabic graphs \cite{galashin}, which arise in the combinatorics associated to Grassmannians \cite{post-grass,post_icm}. When the cubillage is \emph{regular}, such graphs arise in the study of KP solitons \cite{huang_thesis,kk,gpw}, and it is this connection that lies behind the definition of the higher Tamari orders in \cite{dm-h}. The paper \cite{olarte_santos} studies \emph{hypersimplicial subdivisions} and shows that, in general, only a subset of these come from cross-sections of subdivisions of zonotopes. This means that the analogues of the map $g$ for other cross-sections of cubillages are not generally surjective. In \cite{dkk-interrelations,dkk-weak,dkk-symmetric}, rather than studying the intersection of a cubillage with these hyperplanes, the \emph{fragmentation} of a cubillage into different pieces cut by these hyperplanes is studied.
\end{remark}

We identify the hyperplane $H_{n}$ with the space $\mathbb{R}^{n - 1}$, so that we can consider $C(n, n - 1)$ sitting inside it as usual. In particular, we abuse notation by using $\pi_{n - 1, \delta + 1}$ to denote the restriction $\pi_{n, \delta + 2}|_{H_{n}}\colon H_{n} \to H_{\delta + 2}$. This convention is illustrated in the following proof, in which we examine how $g$ interacts with increasing flips.

\begin{proposition}\label{prop:covRel}
If $\mathcal{Q}, \mathcal{Q}'$ are cubillages of $Z(n, \delta + 1)$ such that $\mathcal{Q} \lessdot \mathcal{Q}'$, then either $g(\mathcal{Q}) = g(\mathcal{Q}')$ or $g(\mathcal{Q}) \lessdot g(\mathcal{Q}')$.
\end{proposition}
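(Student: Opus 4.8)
The plan is to exploit the fact that the increasing flip $\mathcal{Q} \lessdot \mathcal{Q}'$ is localised at a single $(\delta+2)$-face $\Gamma$ of $Z(n,n)$, and to understand how the cross-section operation $-\,\cap\, H_n$ behaves with respect to $\Gamma$. First I would recall from the definition of the covering relation that $\mathcal{Q}\setminus\Gamma = \mathcal{Q}'\setminus\Gamma$, with $\pi_{n,\delta+2}(\mathcal{Q})$ containing the lower facets of $\pi_{n,\delta+2}(\Gamma)$ and $\pi_{n,\delta+2}(\mathcal{Q}')$ the upper facets. Intersecting everything with $H_n$ and using the identification $H_n \cong \mathbb{R}^{n-1}$ together with the convention $\pi_{n-1,\delta+1} := \pi_{n,\delta+2}|_{H_n}$, we get $g(\mathcal{Q})\setminus(\Gamma\cap H_n) = g(\mathcal{Q}')\setminus(\Gamma\cap H_n)$. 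So outside of $\Sigma := \Gamma \cap H_n$ the two triangulations agree; the whole question reduces to analysing $\Sigma$.

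The key dichotomy is whether the hyperplane $H_n$ meets the interior of $\Gamma$ or only its boundary (or misses it entirely). If $H_n$ does not meet the interior of $\Gamma$, then $\Gamma \cap H_n$ is a (possibly empty) face common to both $\mathcal{Q}$ and $\mathcal{Q}'$ that is untouched by the flip — more precisely, the lower and upper facets of $\pi_{n,\delta+2}(\Gamma)$ that lie in $\mathcal{Q}$ respectively $\mathcal{Q}'$ restrict to the same subcomplex of $H_n$ — and hence $g(\mathcal{Q}) = g(\mathcal{Q}')$. If $H_n$ does meet the interior of $\Gamma$, then $\Sigma = \Gamma \cap H_n$ is a $(\delta+1)$-dimensional polytope; since $\Gamma$ is a $(\delta+2)$-face of the zonotope $Z(n,n)$, and taking the vertex figure at $\xi_\emptyset$ of a zonotope face of this form yields the corresponding cyclic-polytope face, $\Sigma$ is a $(\delta+1)$-face of $C(n,n-1)$. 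Moreover, the lower facets of $\pi_{n,\delta+2}(\Gamma)$ intersect $H_n$ in exactly the lower facets of $\pi_{n-1,\delta+1}(\Sigma)$, and likewise for the upper facets. Therefore $g(\mathcal{Q})$ contains the lower facets of $\pi_{n-1,\delta+1}(\Sigma)$ and $g(\mathcal{Q}')$ contains the upper facets, while they agree away from $\Sigma$; by the definition of the covering relation in $\mathcal{S}(n,\delta)$ this gives exactly $g(\mathcal{Q}) \lessdot g(\mathcal{Q}')$.

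The main obstacle I anticipate is the second case: verifying rigorously that when $H_n$ slices through the interior of the zonotopal $(\delta+2)$-face $\Gamma$, the cross-section $\Sigma$ is genuinely a $(\delta+1)$-face of the cyclic polytope $C(n,n-1)$ of the type appearing in a covering relation of $\mathcal{S}(n,\delta)$, and that the operation of taking the cross-section commutes with "take the lower facets" and "take the upper facets". This is essentially the assertion that the vertex figure at $\xi_\emptyset$ sends the combinatorics of $\delta$-flips in $Z(n,\delta+1)$ to the combinatorics of $\delta$-flips in $C(n,\delta)$, which can be extracted from Lemma~\ref{lem:vertexFigTriang} applied one dimension up (to the face $\Gamma$, which is itself a small cyclic zonotope after an affine change of coordinates) together with the compatibility of $\pi_{n,\delta+2}$ with the restriction $\pi_{n-1,\delta+1}$; one also needs that the "lower/upper" designation, defined via a large last coordinate, is preserved under intersecting with the hyperplane $x_1 = 1$, which holds because that hyperplane is vertical with respect to the last coordinate. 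Once these compatibilities are in place the argument is a direct case check, so I would present the two cases as above and spend most of the written proof on the identification of $\Sigma$ and its facets in the non-degenerate case.
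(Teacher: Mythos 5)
Your proposal is correct and follows the same case analysis as the paper's proof: examine whether $H_n$ slices through the flipping face $\Gamma$ (giving $g(\mathcal{Q}) \lessdot g(\mathcal{Q}')$) or misses its interior (giving $g(\mathcal{Q}) = g(\mathcal{Q}')$). The paper makes your dichotomy fully concrete by observing that, if $\xi_E$ is the initial vertex of $\Gamma$, then every point of $\Gamma$ has first coordinate at least $\#E$, so $H_n$ meets the interior of $\Gamma$ precisely when $E = \emptyset$, in which case $\pi_{n,\delta+2}(\Gamma) \cap H_{\delta+2}$ is exactly the simplex $\pi_{n-1,\delta+1}|A|$ for $A$ the generating set of $\Gamma$, which resolves the compatibility concerns you flag at the end.
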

\begin{proof}
Let $\mathcal{Q}$ and $\mathcal{Q}'$ be two cubillages such that $\mathcal{Q} \lessdot \mathcal{Q}'$. Let $\Gamma$ be the $(\delta + 2)$-face of $Z(n, n)$ which induces the increasing flip, and let the initial vertex of $\Gamma$ be $\xi_{E} = (x_{1}, \dots, x_{n})$. Then $\mathcal{Q}$ and $\mathcal{Q}'$ differ only in that $\pi_{n, \delta + 2}(\mathcal{Q})$ contains the lower facets of $\pi_{n, \delta + 2}(\Gamma)$ and $\pi_{n, \delta + 2}(\mathcal{Q}')$ contains the upper facets of $\pi_{n, \delta + 2}(\Gamma)$.

The intersection $\Gamma \cap H_{n}$ consists of more than a single point if and only if $E = \emptyset$. This is because, given $(y_{1}, \dots, y_{n}) \in \Gamma$, we have $y_{1} \geqslant x_{1} = \# E$. Hence if $\# E > 1$, then $\Gamma \cap H_{n} = \emptyset$; and if $\# E = 1$, then $\Gamma \cap H_{n} = \xi_{E}$. Thus if $E \neq \emptyset$, then $\mathcal{Q}$ and $\mathcal{Q}'$ both have the same intersection with the hyperplane $H_{n}$, so that $g(\mathcal{Q})=g(\mathcal{Q}')$.

If $E = \emptyset$, then $\pi_{n, \delta + 2}(\Gamma) \cap H_{\delta + 2}$ is the $(\delta+1)$-simplex $\pi_{n - 1, \delta + 1}|A|$, where $A$ is the generating set of $\Gamma$. We then have that $g(\mathcal{Q})$ and $g(\mathcal{Q}')$ differ only in that $\pi_{n - 1, \delta + 1}(g(\mathcal{Q}))$ contains the lower facets of $\pi_{n - 1, \delta + 1}|A|$, whereas $\pi_{n - 1, \delta + 1}(g(\mathcal{Q}'))$ contains the upper facets of $\pi_{n - 1, \delta + 1}|A|$. Hence $g(\mathcal{Q}) \lessdot g(\mathcal{Q}')$.
\end{proof}

Recall that if $(X, \leqslant)$ and $(Y, \leqslant)$ are posets, and $f \colon X \to Y$ is map such that we have $f(x) \leqslant f(x')$ whenever $x \leqslant x'$, then $f$ is called \emph{order-preserving}.

\begin{corollary}\label{cor-ord-pres}
The map $g \colon \mathcal{B}(n, \delta + 1) \to \mathcal{S}(n, \delta)$ is order-preserving.
\end{corollary}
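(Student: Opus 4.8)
The plan is to deduce this immediately from Proposition~\ref{prop:covRel} together with the fact, recalled in Section~\ref{sect:notation}, that a finite poset is the transitive--reflexive closure of its covering relations. First I would observe that $\mathcal{B}(n,\delta+1)$ is finite: its elements are cubillages of $Z(n,\delta+1)$, which by \cite{gp} are in bijection with the finitely many $\delta$-separated collections of maximal size in $2^{[n]}$ (equivalently, each cubillage has exactly $\sum_{i=0}^{\delta+1}\binom{n}{i}$ vertices). Hence any relation $\mathcal{Q} \leqslant \mathcal{Q}'$ in $\mathcal{B}(n,\delta+1)$ can be refined to a saturated chain
\[
\mathcal{Q} = \mathcal{Q}_{0} \lessdot \mathcal{Q}_{1} \lessdot \cdots \lessdot \mathcal{Q}_{m} = \mathcal{Q}'.
\]

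Next I would apply Proposition~\ref{prop:covRel} to each covering relation $\mathcal{Q}_{j} \lessdot \mathcal{Q}_{j+1}$, which gives either $g(\mathcal{Q}_{j}) = g(\mathcal{Q}_{j+1})$ or $g(\mathcal{Q}_{j}) \lessdot g(\mathcal{Q}_{j+1})$; in both cases $g(\mathcal{Q}_{j}) \leqslant g(\mathcal{Q}_{j+1})$ in $\mathcal{S}(n,\delta)$. Finally, transitivity of the order on $\mathcal{S}(n,\delta)$ yields $g(\mathcal{Q}) = g(\mathcal{Q}_{0}) \leqslant g(\mathcal{Q}_{m}) = g(\mathcal{Q}')$, which is exactly the assertion that $g$ is order-preserving.

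There is no real obstacle here: the content is entirely in Proposition~\ref{prop:covRel}, and the corollary is the routine passage from covering relations to the full order in a finite poset. The only point worth stating explicitly is the finiteness of $\mathcal{B}(n,\delta+1)$, which justifies the reduction to saturated chains.
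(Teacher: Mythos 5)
Your proof is correct and matches the paper's intent: the corollary is stated without proof there precisely because it is the routine deduction from Proposition~\ref{prop:covRel} via refinement to covering chains in the finite poset $\mathcal{B}(n,\delta+1)$. Nothing to add.
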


\begin{example}\label{ex:vertex_figures}
We now give two examples of taking the vertex figure of a cubillage of $Z(n,\delta+1)$ at $\xi_{\emptyset}$.

First, consider the cubillage $\mathcal{Q}_{1}$ of $Z(4,2)$ shown in Figure~\ref{fig:q1}. As we did above, we can find the vertex figure of $\mathcal{Q}_{1}$ at $\xi_{\emptyset}$ by intersecting with the hyperplane $H_{4}$, as shown. We thus obtain the triangulation $g(\mathcal{Q}_{1})=\mathcal{T}_{1}$ of $C(4,1)$ shown in Figure~\ref{fig:t1}.

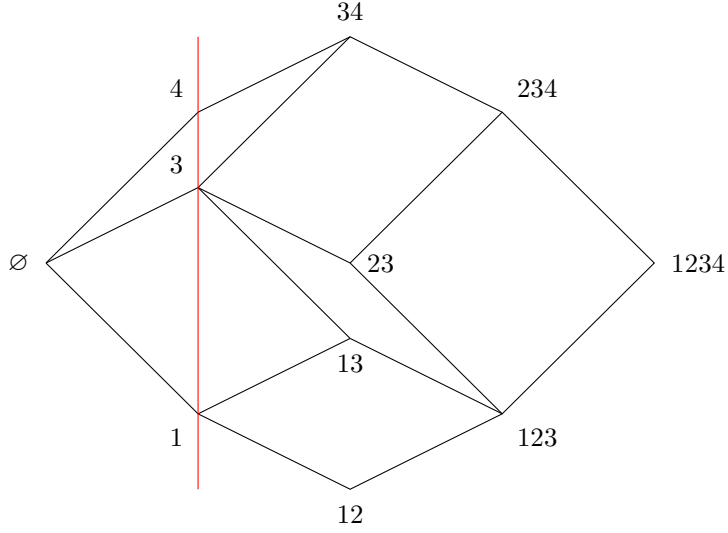
\begin{figure}
\caption{The cubillage $\mathcal{Q}_{1}$ of $Z(4,2)$ intersected with $H_{4}$.}\label{fig:q1}
\[
\begin{tikzpicture}


\coordinate(0) at (0,0);
\node at (0)[left = 1mm of 0]{$\emptyset$};
\coordinate(1) at (2,-2);
\node at (1)[below left = 1mm of 1]{1};
\coordinate(12) at (4,-3);
\node at (12)[below = 1mm of 12]{12};
\coordinate(123) at (6,-2);
\node at (123)[below right = 1mm of 123]{123};
\coordinate(1234) at (8,0);
\node at (1234)[right = 1mm of 1234]{1234};
\coordinate(234) at (6,2);
\node at (234)[above right = 1mm of 234]{234};
\coordinate(34) at (4,3);
\node at (34)[above = 1mm of 34]{34};
\coordinate(4) at (2,2);
\node at (4)[above left = 1mm of 4]{4};


\draw (0) -- (1) -- (12) -- (123) -- (1234) -- (234) -- (34) -- (4) -- (0);


\coordinate(3) at (2,1);
\coordinate(13) at (4,-1);
\coordinate(23) at (4,0);


\node at (3) [above left = 1mm of 3]{3};
\node at (13) [below = 1mm of 13]{13};
\node at (23) [right = 1mm of 23]{23};


\draw (0) -- (3);
\draw (3) -- (34);
\draw (3) -- (23);
\draw (3) -- (13);
\draw (1) -- (13);
\draw (13) -- (123);
\draw (23) -- (123);
\draw (23) -- (234);


\draw[red] (2,-3) -- (2,3);

\end{tikzpicture}
\]
\end{figure}

\begin{figure}
\caption{The triangulation $g(\mathcal{Q}_{1})=\mathcal{T}_{1}$ of $C(4,1)$.}\label{fig:t1}
\[
\begin{tikzpicture}

\draw[red] (1,0) -- (4,0);

\node(1) at (1,0) {$\bullet$};
\node at (1)[below = 1mm of 1]{1};
\node(3) at (3,0) {$\bullet$};
\node at (3)[below = 1mm of 3]{3};
\node(4) at (4,0) {$\bullet$};
\node at (4)[below = 1mm of 4]{4};
\end{tikzpicture}
\]
\end{figure}
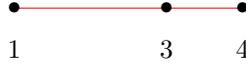

Secondly, consider the cubillage $\mathcal{Q}_{2}$ of $Z(4,3)$ illustrated in Figure~\ref{fig:q2}. This cubillage possesses four cubes, two of which share the face highlighted in blue. The hyperplane $H_{4}$ is shown here in red. The intersection gives the triangulation $g(\mathcal{Q}_{2})=\mathcal{T}_{2}$ of $C(4,2)$ shown in Figure~\ref{fig:t2}.

\begin{figure}
\caption{The cubillage $\mathcal{Q}_{2}$ of $Z(4,3)$.}\label{fig:q2}
\[
\begin{tikzpicture}

\begin{scope}[xscale=0.9]


\coordinate(0) at (0,-5);
\coordinate(1) at (-3,-2);
\coordinate(2) at (-3,-3);
\coordinate(3) at (3,-3);
\coordinate(4) at (3,-2);
\coordinate(12) at (-6,0);
\coordinate(23) at (0,-1);
\coordinate(13) at (0,0);
\coordinate(14) at (0,1);
\coordinate(34) at (6,0);
\coordinate(123) at (-3,2);
\coordinate(124) at (-3,3);
\coordinate(234) at (3,2);
\coordinate(134) at (3,3);
\coordinate(1234) at (0,5);


\draw[gray] (0) -- (4);
\draw[gray] (4) -- (34);
\draw[gray] (1) -- (14);
\draw[gray] (4) -- (14);
\draw[gray] (14) -- (124);
\draw[gray] (14) -- (134);
\draw[gray] (0) -- (1);
\draw[gray] (1) -- (12);


\draw[fill=blue!30,draw=none] (0) -- (3) -- (13) -- (1) -- (0);
\draw[fill=red!30,draw=none] (1) -- (2) -- (3) -- (1);

\path[name path = line 1] (1) -- (4);
\path[name path = line 2] (3) -- (13);

\path [name intersections={of = line 1 and line 2}];
\coordinate (a) at (intersection-1);

\draw[fill=red!30,draw=none] (a) -- (4) -- (3) -- (a);


\draw[dotted,red] (1) -- (2) -- (3) -- (4) -- (1);
\draw[dotted,blue] (1) -- (3);
\draw[dashed] (1) -- (13);
\draw[dashed] (3) -- (13);
\draw[dashed] (13) -- (123);


\draw (0) -- (2);
\draw (0) -- (3);
\draw (2) -- (12);
\draw (2) -- (23);
\draw (3) -- (23);
\draw (12) -- (123);
\draw (23) -- (123);
\draw (3) -- (34);
\draw (23) -- (234);
\draw (12) -- (124);
\draw (34) -- (134);
\draw (34) -- (234);
\draw (123) -- (1234);
\draw (124) -- (1234);
\draw (234) -- (1234);
\draw (134) -- (1234);


\draw[dashed] (13) -- (134);


\node at (0) [below = 1mm of 0]{$\emptyset$};
\node at (1) [above = 1mm of 1]{1};
\node at (2) [below left = 1mm of 2]{2};
\node at (3) [below right = 1mm of 3]{3};
\node at (4) [above = 1mm of 4]{4};
\node at (12) [left = 1mm of 12]{12};
\node at (23) [below = 1mm of 23]{23};
\node at (34) [right = 1mm of 34]{34};
\node at (14) [above = 1mm of 14]{14};
\node at (123) [below = 1mm of 123]{123};
\node at (234) [below = 1mm of 234]{234};
\node at (134) [above right = 1mm of 134]{134};
\node at (124) [above left = 1mm of 124]{124};
\node at (1234) [above = 1mm of 1234]{1234};

\node at (13) [above = 1mm of 13]{13};

\end{scope}

\end{tikzpicture}
\]
\end{figure}
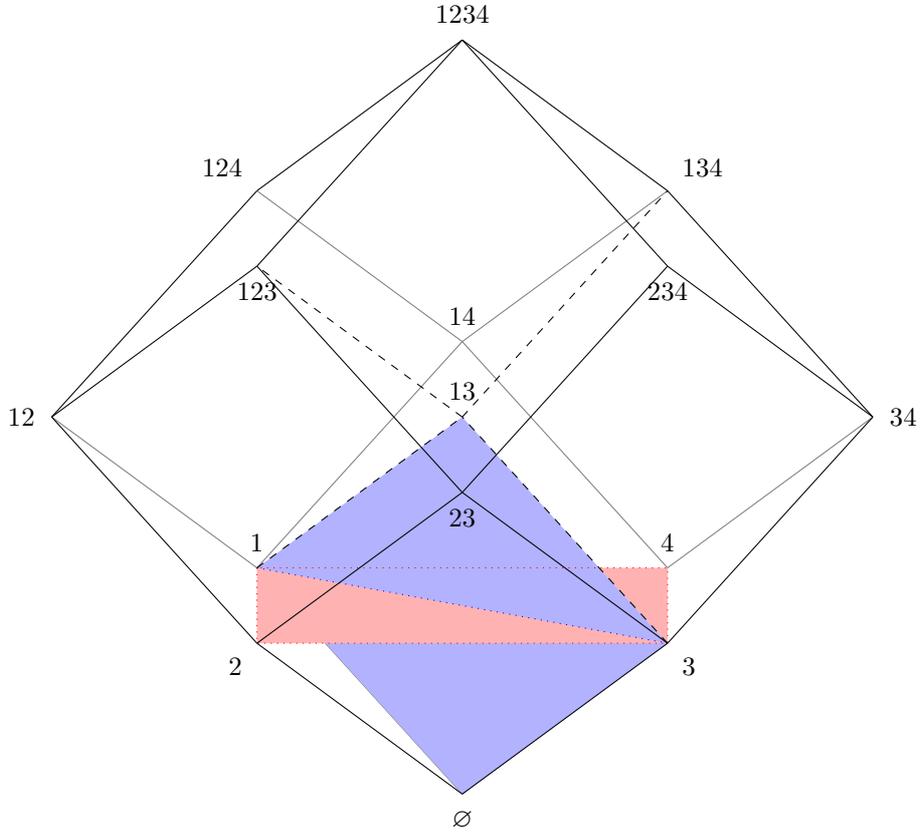

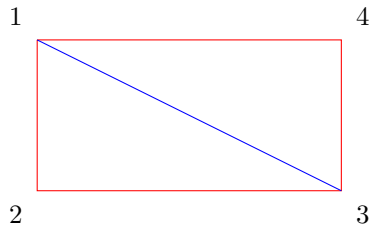
\begin{figure}
\caption{The triangulation $g(\mathcal{Q}_{2})=\mathcal{T}_{2}$ of $C(4, 2)$.}\label{fig:t2}
\[
\begin{tikzpicture}


\coordinate(1) at (-2,1);
\coordinate(2) at (-2,-1);
\coordinate(3) at (2,-1);
\coordinate(4) at (2,1);


\draw[red] (1) -- (2) -- (3) -- (4) -- (1);
\draw[blue] (1) -- (3);


\node at (1) [above left = 1mm of 1]{1};
\node at (2) [below left = 1mm of 2]{2};
\node at (3) [below right = 1mm of 3]{3};
\node at (4) [above right = 1mm of 4]{4};

\end{tikzpicture}
\]
\end{figure}
\end{example}

\begin{remark}\label{rmk:g_dual}
There is a dual version of the map $g$, given by
\begin{align*}
\overline{g}\colon \mathcal{B}(n,\delta+1)&\rightarrow\mathcal{S}(n,\delta) \\
\mathcal{Q} &\mapsto \mathcal{Q}\cap \overline{H}_{n},
\end{align*}
where $\overline{H}_{n} = \{(x_{1}, \dots, x_{n})\in \mathbb{R}^{n} \mid x_{1} = n - 1 \}$. Given a cubillage $\mathcal{Q}$ of $Z(n,\delta+1)$, the triangulation $\overline{g}(\mathcal{Q})$ is induced by taking the vertex figure of $Z(n, n)$ at the vertex $\xi_{[n]}$. This map was considered in \cite[Proposition 7.1]{thomas-bst}. The dual of Proposition~\ref{prop-comb-interp} gives that if $\mathcal{Q} \lessdot \mathcal{Q}'$, then either $\overline{g}(\mathcal{Q})=\overline{g}(\mathcal{Q}')$ or $\overline{g}(\mathcal{Q}) \gtrdot \overline{g}(\mathcal{Q}')$. Hence $\overline{g}$ is order-reversing. That is, if $\mathcal{Q} \leqslant \mathcal{Q}'$, then $g(\mathcal{Q}) \geqslant g(\mathcal{Q}')$.
\end{remark}

\subsection{Separated collections}\label{sect:g_comb}

Our second definition of the map uses the characterisation of cubillages in terms of separated collections and the combinatorial framework for triangulations of cyclic polytopes from \cite{ot,njw-hst}. This is the framework we use to prove that $g$ is a quotient map of posets in Section~\ref{sect-surj} and Section~\ref{sect-quot}.

Given a triangulation $\mathcal{T}$ of $C(n,\delta)$, let \[\Sigma(\mathcal{T}):=\{A \subseteq [n] \mid |A| \text{ is a simplex of }\mathcal{T}\}.\] This can be viewed as the abstract simplicial complex corresponding to $\mathcal{T}$. The following lemma tells us how the value of $g(\mathcal{Q})$ is determined by $\mathrm{Sp}(\mathcal{Q})$.

\begin{lemma}\label{lem-all-simps}
Let $\mathcal{Q}$ be a cubillage of $Z(n,\delta+1)$ and $\mathcal{T}$ be a triangulation of $C(n,\delta)$. Then $g(\mathcal{Q})=\mathcal{T}$ if and only if $\mathrm{Sp}(\mathcal{Q})\supseteq \Sigma(\mathcal{T})$.
\end{lemma}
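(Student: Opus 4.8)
The plan is to prove both implications separately, exploiting the geometric description of $g$ as intersection with the hyperplane $H_n$.

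\emph{The forward direction} ($g(\mathcal{Q}) = \mathcal{T} \implies \mathrm{Sp}(\mathcal{Q}) \supseteq \Sigma(\mathcal{T})$) should be essentially immediate. If $|A|$ is a simplex of $\mathcal{T} = g(\mathcal{Q}) = \mathcal{Q} \cap H_n$, then every vertex $p_{a_i}$ of $|A|$ lies in $\mathcal{Q} \cap H_n$, and the vertices of $\mathcal{Q} \cap H_n$ are exactly the intersections of the edges of $\mathcal{Q}$ through $\xi_\emptyset$ with $H_n$. Concretely, the vertex figure construction in the proof of Lemma~\ref{lem:vertexFigTriang} identifies the vertex $p_{a_i}$ of $C(n,\delta) = Z(n,\delta+1) \cap H_{\delta+1}$ with the edge of $Z(n,n)$ from $\xi_\emptyset$ to $\xi_{\{a_i\}}$; hence a vertex of $\mathcal{T}$ corresponding to $a_i$ forces $\xi_{\{a_i\}}$ (and $\xi_\emptyset$) into $\mathrm{Sp}(\mathcal{Q})$. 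But this only shows the singletons are in the spectrum, not all of $\Sigma(\mathcal{T})$. The point is that $\Sigma(\mathcal{T})$ as an abstract simplicial complex consists of subsets $A \subseteq [n]$, and I should check what it means for such an $A$ to be ``in'' $\mathrm{Sp}(\mathcal{Q})$: presumably the convention is that $\mathrm{Sp}(\mathcal{Q}) \subseteq 2^{[n]}$ and we are asking $\Sigma(\mathcal{T}) \subseteq \mathrm{Sp}(\mathcal{Q})$ as sets of subsets, i.e. each simplex $A$ of $\mathcal{T}$ appears as (the label of) a vertex of $\mathcal{Q}$. So the real content of the forward direction is: if $|A|$ is a simplex of the cross-section $\mathcal{Q} \cap H_n$, then $\xi_A$ is a vertex of $\mathcal{Q}$. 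This follows because the face of $\mathcal{Q}$ cut by $H_n$ to produce $|A|$ is a face of $Z(n,n)$ whose edges through $\xi_\emptyset$ have colours exactly $a_0, \dots, a_k$; such a face is a parallelepiped with initial vertex $\xi_\emptyset$, so its vertices are precisely $\xi_{A'}$ for $A' \subseteq A$, and in particular $\xi_A \in \mathrm{Sp}(\mathcal{Q})$. I would spell this out using the description of cubes and their vertices from the ``separated collections'' subsection.

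\emph{The reverse direction} ($\mathrm{Sp}(\mathcal{Q}) \supseteq \Sigma(\mathcal{T}) \implies g(\mathcal{Q}) = \mathcal{T}$) is where the main work lies. We know $g(\mathcal{Q})$ is \emph{some} triangulation of $C(n,\delta)$ by Lemma~\ref{lem:vertexFigTriang}, and by the forward direction $\Sigma(g(\mathcal{Q})) \subseteq \mathrm{Sp}(\mathcal{Q})$. The strategy is to show that a triangulation of $C(n,\delta)$ is determined by which subsets of $[n]$ appear as its simplices together with the ambient constraint that simplices of a common triangulation are pairwise non-crossing; then compare $\mathcal{T}$ and $g(\mathcal{Q})$. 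The cleanest route: by Dey's theorem (cited after Observation~\ref{obs:internal}), a triangulation is determined by its internal $\floor{\delta/2}$-simplices, so it suffices to show $\simp(\mathcal{T}) = \simp(g(\mathcal{Q}))$. Suppose $A \in \simp(\mathcal{T})$; then $A \in \Sigma(\mathcal{T}) \subseteq \mathrm{Sp}(\mathcal{Q})$, so $\xi_A$ is an internal vertex of $\mathcal{Q}$ (internal because $|A|$ is an internal simplex, using Observation~\ref{obs:internal}), hence $A \in \intsp(\mathcal{Q})$. Now I want to conclude $A \in \simp(g(\mathcal{Q}))$, i.e. that $|A|$ appears in the cross-section. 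The key geometric fact should be: the internal $\floor{\delta/2}$-simplices of $g(\mathcal{Q})$ are exactly the sets $A$ with $\xi_A \in \intsp(\mathcal{Q})$ — because an internal vertex $\xi_A$ of $\mathcal{Q}$ projects under $\pi_{n,\delta+1}$ to an internal point of $Z(n,\delta+1)$, lies on $H_n$ after suitable identification, and the edges of $\mathcal{Q}$ at $\xi_A$... hmm, this is not quite right since $\xi_A$ with $A \neq \emptyset$ is not on $H_n$. Instead the correct statement is about which cubes of $\mathcal{Q}$ contain $\xi_\emptyset$ as their initial vertex: a simplex $|A|$ of $g(\mathcal{Q})$ comes from a cube of $\mathcal{Q}$ with initial vertex $\xi_\emptyset$ and generating set $A \cup (\text{other colours})$. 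So I would argue: $A \in \simp(g(\mathcal{Q}))$ iff there is a cube of $\mathcal{Q}$ with initial vertex $\xi_\emptyset$ whose generating set contains $A$ with the complementary colours ``small'', and show $\xi_A \in \mathrm{Sp}(\mathcal{Q})$ combined with internality of $|A|$ forces exactly this. Then $\simp(\mathcal{T}) \subseteq \simp(g(\mathcal{Q}))$, and since both are triangulations of $C(n,\delta)$ (so have the same number of internal simplices, or simply: a triangulation whose internal simplex set contains another's must equal it), we get $\mathcal{T} = g(\mathcal{Q})$.

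\emph{Anticipated obstacle.} The crux is the precise link between ``$\xi_A \in \mathrm{Sp}(\mathcal{Q})$ and $|A|$ internal'' and ``$|A|$ is a simplex of the cross-section $g(\mathcal{Q})$''. One direction of this (simplex of cross-section $\Rightarrow$ vertex of spectrum) is the forward implication above and is structural. The converse requires knowing that whenever $\xi_A$ is a vertex of $\mathcal{Q}$ with $\pi_{n,\delta+1}(\xi_A)$ internal, the ``lowest'' cube containing it — more precisely the unique cube of $\mathcal{Q}$ with initial vertex $\xi_\emptyset$ lying in the relevant cone — actually realises $|A|$ in the cross-section. I expect this is where one must invoke either the separated-collection bijection of \cite{gp} or a direct argument about vertex figures: the vertex figure of $\mathcal{Q}$ at $\xi_\emptyset$ is a triangulation, and its vertices/simplices are governed by the star of $\xi_\emptyset$ in $\mathcal{Q}$; any internal $\xi_A$ must be ``visible from'' $\xi_\emptyset$ along a cube of the star in exactly the way that produces $|A|$. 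I would likely handle this by showing both $\simp(\mathcal{T})$ and $\mathrm{Sp}(\mathcal{Q}) \cap \{A : |A| \text{ internal in } C(n,\delta)\} = \intsp(\mathcal{Q})$ coincide with $\simp(g(\mathcal{Q}))$, using that separated collections of maximal size restrict compatibly under taking vertex figures — so the real lemma behind this is that $\intsp(\mathcal{Q}) = \simp(g(\mathcal{Q}))$ for every cubillage, which then makes the displayed equivalence fall out by comparing cardinalities and using that $\mathcal{T}, g(\mathcal{Q})$ are both triangulations.
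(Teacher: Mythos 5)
Your forward direction is essentially the paper's: a $\delta$-simplex $|A|$ of $\mathcal{T}=g(\mathcal{Q})$ is cut by $H_n$ from a cube $\Delta$ of $\mathcal{Q}$ with initial vertex $\xi_\emptyset$ and generating set $A$, all of whose vertices $\xi_B$ (for $B\subseteq A$) then lie in $\mathrm{Sp}(\mathcal{Q})$, and every simplex of $\mathcal{T}$ is a face of a $\delta$-simplex. Your phrasing meanders but the argument is sound.

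The reverse direction is where you have a genuine gap, and you effectively admit it under ``Anticipated obstacle''. You propose to reduce to internal $\lfloor\delta/2\rfloor$-simplices via Dey's theorem and to show $\simp(\mathcal{T})\subseteq\simp(g(\mathcal{Q}))$, which hinges on the claim that $A\in\intsp(\mathcal{Q})\cap\binom{[n]}{\lfloor\delta/2\rfloor+1}$ forces $A\in\simp(g(\mathcal{Q}))$. You never prove this; you only gesture at ``separated collections of maximal size restrict compatibly under taking vertex figures'', which is not an argument. (The precise claim you need is Proposition~\ref{prop-comb-interp} of the paper, which is proved \emph{after} and \emph{from} Lemma~\ref{lem-all-simps}. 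Your route could be made non-circular, since Proposition~\ref{prop-comb-interp} only uses the forward direction of the Lemma together with a circuit/transversality argument, but you would have to actually carry that argument out.) You also misstate the identity as $\intsp(\mathcal{Q})=\simp(g(\mathcal{Q}))$, which fails on cardinality grounds — $\intsp(\mathcal{Q})$ contains subsets of many sizes, whereas $\simp(g(\mathcal{Q}))\subseteq\binom{[n]}{\lfloor\delta/2\rfloor+1}$.

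The paper's reverse direction is shorter and does not go through internal simplices at all: given a $\delta$-simplex $|A|$ of $\mathcal{T}$, the hypothesis $\mathrm{Sp}(\mathcal{Q})\supseteq\Sigma(\mathcal{T})$ gives $2^A\subseteq\mathrm{Sp}(\mathcal{Q})$, and then a cited result from Danilov--Karzanov--Koshevoy ((2.5) in \cite{dkk}) says that having the full Boolean lattice $2^A$ in the spectrum forces the cube with initial vertex $\xi_\emptyset$ and generating set $A$ to be a cube of $\mathcal{Q}$. That cube meets $H_n$ in $|A|$, so $|A|$ is a $\delta$-simplex of $g(\mathcal{Q})$; doing this for every $\delta$-simplex of $\mathcal{T}$ gives $g(\mathcal{Q})=\mathcal{T}$. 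This is the tool your sketch is missing: it replaces your unproved ``visibility of $\xi_A$ from $\xi_\emptyset$'' heuristic with a concrete existence result for cubes, and it avoids Dey's theorem, internal simplices, and cardinality comparisons entirely.
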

\begin{proof}
Suppose that $g(\mathcal{Q})=\mathcal{T}$. Let $|A|$ be a $\delta$-simplex of $\mathcal{T}$. Then there is a cube $\Delta$ of $\mathcal{Q}$ such that $|A|=\Delta \cap H_{n}$. We must have that the initial vertex of $\Delta$ is $\xi_{\emptyset}$ and the set of generating vectors is $A$. Thus if $|B|$ is a face of $|A|$, then $\xi_{B}$ is a vertex of $\Delta$, and hence $B \in \mathrm{Sp}(\mathcal{Q})$. Since every simplex of the triangulation $\mathcal{T}$ is a face of a $\delta$-simplex, we have that $\mathrm{Sp}(\mathcal{Q}) \supseteq \Sigma(\mathcal{T})$.

Conversely, suppose that $\mathrm{Sp}(\mathcal{Q}) \supseteq \Sigma(\mathcal{T})$. Let $|A|$ be a $\delta$-simplex of $\mathcal{T}$. Then $2^{A} \subseteq \Sigma(\mathcal{T}) \subseteq \mathrm{Sp}(\mathcal{Q})$. By \cite[(2.5)]{dkk}, the cube $\Delta$ with initial vertex $\emptyset$ and generating vectors $A$ is therefore a cube of $\mathcal{Q}$. This means that $|A|$ is a $\delta$-simplex of $g(\mathcal{Q})$, since $|A| = \Delta \cap H_{n}$. Since this is true for any $\delta$-simplex of $\mathcal{T}$, we must have $g(\mathcal{Q})=\mathcal{T}$.
\end{proof}

In fact, as the following proposition shows, we need only consider $\intsp(\mathcal{Q}) \cap \binom{[n]}{\floor{\delta/2} + 1}$ to know the value of $g(\mathcal{Q})$.

\begin{proposition}\label{prop-comb-interp}
Given a cubillage $\mathcal{Q} \in \mathcal{B}(n,\delta+1)$, we have that $\simp(g(\mathcal{Q}))=\intsp(\mathcal{Q}) \cap \binom{[n]}{\floor{\delta/2} + 1}$.
\end{proposition}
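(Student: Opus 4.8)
The plan is to prove the two inclusions separately, leveraging Lemma~\ref{lem-all-simps} and Observation~\ref{obs:internal}. First I would establish $\simp(g(\mathcal{Q})) \subseteq \intsp(\mathcal{Q}) \cap \binom{[n]}{\floor{\delta/2}+1}$. Take $A \in \simp(g(\mathcal{Q}))$, so $|A|$ is an internal $\floor{\delta/2}$-simplex of the triangulation $g(\mathcal{Q})$ of $C(n,\delta)$. By Lemma~\ref{lem-all-simps}, $\mathrm{Sp}(\mathcal{Q}) \supseteq \Sigma(g(\mathcal{Q})) \ni A$, so $\xi_A$ is a vertex of $\mathcal{Q}$. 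Since $|A|$ is internal in $C(n,\delta)$, Observation~\ref{obs:internal} gives that $\xi_A$ is an internal point in $Z(n,\delta+1)$, hence an internal vertex of $\mathcal{Q}$, so $A \in \intsp(\mathcal{Q})$; and $A$ has size $\floor{\delta/2}+1$ by definition of $\simp$. This direction is essentially immediate from the lemma and observation already proved.

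For the reverse inclusion, take $A \in \intsp(\mathcal{Q}) \cap \binom{[n]}{\floor{\delta/2}+1}$. Then $\xi_A$ is an internal vertex of $\mathcal{Q}$, so by Observation~\ref{obs:internal}, $|A|$ is an internal $\floor{\delta/2}$-simplex in $C(n,\delta)$. I need to show $|A|$ is actually a simplex of the triangulation $g(\mathcal{Q})$. Here I would argue that since $\xi_A$ is a vertex of $\mathcal{Q}$, the projection $\pi_{n,\delta+1}(\xi_A)$ lies in the interior of $Z(n,\delta+1)$, and intersecting with the hyperplane $H_n$ (following the vertex-figure construction in Lemma~\ref{lem:vertexFigTriang}), the point $\xi_A \in H_n$ must be a vertex of the triangulation $g(\mathcal{Q}) = \mathcal{Q} \cap H_n$ of $C(n,\delta)$ — provided $\xi_A$ lies on $H_n$, which it need not. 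The key point is that $g(\mathcal{Q}) = \mathcal{Q} \cap H_n$ only captures vertices with $x_1 = 1$, i.e.\ singletons, so internal simplices of $g(\mathcal{Q})$ of dimension $\floor{\delta/2}$ are \emph{not} literally vertices of $\mathcal{Q}$. Instead I should use that $\simp(g(\mathcal{Q}))$ is characterized by $\Sigma(g(\mathcal{Q}))$ via Dey's theorem, and relate the internal simplices of $g(\mathcal{Q})$ to internal vertices of $\mathcal{Q}$ through the vertex-figure correspondence: a $k$-face of $\mathcal{Q}$ incident to $\xi_\emptyset$ whose generating set is $A$ cuts $H_n$ in a $(k-1)$-simplex whose vertices are $\{\xi_{\{a\}} : a \in A\}$, and this simplex is identified combinatorially with $A$.

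So the real content of the reverse inclusion is: given $A \in \intsp(\mathcal{Q}) \cap \binom{[n]}{\floor{\delta/2}+1}$, show $2^A \subseteq \mathrm{Sp}(\mathcal{Q})$, whence $|A| \in g(\mathcal{Q})$ by (an appropriate use of) \cite[(2.5)]{dkk} and $|A|$ is internal by Observation~\ref{obs:internal}. To get $2^A \subseteq \mathrm{Sp}(\mathcal{Q})$, I would argue that $A$ itself being a vertex of $\mathcal{Q}$ forces the face of $\mathcal{Q}$ on which $\xi_A$ sits to contain a full parallelepiped cell with $\xi_A$ as a vertex; more cleanly, I would invoke the separated-collection description: $\mathrm{Sp}(\mathcal{Q})$ is a $\delta$-separated collection of maximal size, and I claim every subset of $A$ is $\delta$-separated from every element already forced in, so maximality and the structure theorem of \cite{gp} force all of $2^A$ into $\mathrm{Sp}(\mathcal{Q})$. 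Actually the slicker route, which I expect to be the intended one, is: $\xi_A \in \mathrm{Sp}(\mathcal{Q})$ means $\xi_A$ is a vertex of some cube $\Delta$ of $\mathcal{Q}$; by the description of cube vertices as $\xi_{E}, \xi_{E\cup\{a_i\}}, \dots$, one shows the unique cube with initial vertex $\xi_\emptyset$ and generating vectors $A$ belongs to $\mathcal{Q}$ precisely when $\xi_A$ is internal, using that internality of $\xi_A$ rules out $\xi_A$ lying on the boundary where it could only be a non-initial, non-final vertex of a cube. The main obstacle is pinning down this last implication — that $\xi_A$ being an internal vertex of $\mathcal{Q}$ forces $2^A \subseteq \mathrm{Sp}(\mathcal{Q})$ — carefully, handling the even and odd $\delta$ cases via the internal-point criterion \cite[(2.7)]{dkk} and the corresponding criterion for internal simplices, so that Observation~\ref{obs:internal} does the bookkeeping and Lemma~\ref{lem-all-simps} closes the argument.
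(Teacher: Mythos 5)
Your forward inclusion is exactly the paper's and is fine: Lemma~\ref{lem-all-simps} gives $A \in \mathrm{Sp}(\mathcal{Q})$, and Observation~\ref{obs:internal} upgrades this to $A \in \intsp(\mathcal{Q})$.

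The reverse inclusion is where you go astray, and your own hedging (``The main obstacle is pinning down this last implication'') flags the problem. Two things are off. First, you misidentify the obstacle: showing $2^{A} \subseteq \mathrm{Sp}(\mathcal{Q})$ is actually trivial, because every proper subset of $A$ has size at most $\floor{\delta/2}$ and is therefore a boundary vertex (by the internal-point criterion of \cite[(2.7)]{dkk}), hence lies in every maximal $\delta$-separated collection. Second, the step you then need --- that $2^{A} \subseteq \mathrm{Sp}(\mathcal{Q})$ forces the parallelepiped with initial vertex $\xi_{\emptyset}$ and generating set $A$ to be a \emph{face} of $\mathcal{Q}$, whence $|A| \in g(\mathcal{Q})$ --- is not covered by the citation you lean on. The statement \cite[(2.5)]{dkk} invoked in Lemma~\ref{lem-all-simps} is for full-dimensional cubes, i.e.\ $\#A = \delta + 1$; here $\#A = \floor{\delta/2} + 1$, which is strictly smaller for $\delta \geqslant 2$, so ``the unique cube with initial vertex $\xi_{\emptyset}$ and generating vectors $A$'' is not a cube at all, and there is no analogous criterion in hand for lower-dimensional faces. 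Nothing in the separated-collection machinery you cite says that a sub-parallelepiped all of whose corners are in the spectrum must itself be a face of the cubillage.

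The paper's argument avoids all of this by running the reverse inclusion as a contradiction via circuits. Suppose $A \in \intsp(\mathcal{Q}) \cap \binom{[n]}{\floor{\delta/2}+1}$ but $A \notin \simp(g(\mathcal{Q}))$. By Observation~\ref{obs:internal}, $|A|$ is an internal $\floor{\delta/2}$-simplex of $C(n,\delta)$; since it is not a simplex of the triangulation $\mathcal{T} = g(\mathcal{Q})$, it must intersect some $\ceil{\delta/2}$-simplex $|B|$ of $\mathcal{T}$ transversely. Then $(A,B)$ is a circuit of $C(n,\delta)$, so $A$ and $B$ are $\delta$-interweaving. But $A \in \mathrm{Sp}(\mathcal{Q})$ by assumption and $B \in \Sigma(\mathcal{T}) \subseteq \mathrm{Sp}(\mathcal{Q})$ by Lemma~\ref{lem-all-simps}, contradicting the $\delta$-separatedness of $\mathrm{Sp}(\mathcal{Q})$. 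This circuit/interweaving observation is the key idea your proposal is missing; it replaces the face-recognition claim you could not justify.
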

\begin{proof}
It follows immediately from Lemma~\ref{lem-all-simps} that $\simp(g(\mathcal{Q})) \subseteq \intsp(\mathcal{Q}) \cap \binom{[n]}{\floor{\delta/2} + 1}$, since if $\# A = \floor{\delta/2} + 1$, then $|A|$ is an internal $\floor{\delta/2}$-simplex in $C(n,\delta)$ if and only if $\xi_{A}$ is an internal point in $Z(n,\delta+1)$, by Observation~\ref{obs:internal}.

To show that $\simp(g(\mathcal{Q})) \supseteq \intsp(\mathcal{Q}) \cap \binom{[n]}{\floor{\delta/2} + 1}$, suppose that we have $A \in \left(\intsp(\mathcal{Q}) \cap \binom{[n]}{\floor{\delta/2} + 1}\right) \setminus \simp(g(\mathcal{Q}))$. Then note that $|A|$ must be an internal $\floor{\delta/2}$-simplex in $C(n, \delta)$, since $\xi_{A}$ is an internal point in $Z(n, \delta + 1)$. However, $|A|$ is not a $\floor{\delta/2}$-simplex of $\mathcal{T}$, so $\pi_{n - 1, \delta}|A|$ intersects a $\ceil{\delta/2}$-simplex $\pi_{n - 1, \delta}|B|$ of $\pi_{n - 1, \delta}(\mathcal{T})$ transversely. This implies that $(A, B)$ is a circuit, and so $A$ and $B$ are $\delta$-interweaving. But this is a contradiction, since $B \in \mathrm{Sp}(\mathcal{Q})$ by Lemma~\ref{lem-all-simps}.
\end{proof}

Proposition~\ref{prop-comb-interp} gives an interpretation of the map $g$ in terms of separated collections. We know that a cubillage $\mathcal{Q}$ of $Z(n,\delta+1)$ is determined by $\intsp(\mathcal{Q})$, and likewise a triangulation $\mathcal{T}$ of $C(n,\delta)$ is determined by $\simp(\mathcal{T})$. Hence one could also define $g(\mathcal{Q})$ to be the triangulation $\mathcal{T}$ such that $\simp(\mathcal{T})=\intsp(\mathcal{Q}) \cap \binom{[n]}{\floor{\delta/2} + 1}$.

\begin{example}\label{ex:g_comb}
We illustrate how to apply the interpretation of $g$ from Proposition~\ref{prop-comb-interp} to the cubillages from Example~\ref{ex:vertex_figures}.

Consider the internal spectrum of $\mathcal{Q}_{1}$, as shown in Figure~\ref{fig:q1}. We have $\intsp(\mathcal{Q}_{1}) = \{3,13,23\}$, so $\intsp(\mathcal{Q}_{1}) \cap \binom{[4]}{1} = \{3\}$. This implies that $\{3\}=\simp(g(\mathcal{Q}_{1}))=\simp(\mathcal{T}_{1})$, which is indeed the case. Note that having $\simp(\mathcal{T}_{1})=\{3\}$ defines $\mathcal{T}_{1}$.

Next, consider the internal spectrum of $\mathcal{Q}_{2}$, as shown in Figure~\ref{fig:q2}. We have $\intsp(\mathcal{Q}_{2}) = \{13\}$, so $\intsp(\mathcal{Q}_{2}) \cap \binom{[4]}{2} = \{13\}$. This implies that $\{13\}=\simp(g(\mathcal{Q}_{2}))=\simp(\mathcal{T}_{2})$, which is indeed the case. Note that having $\simp(\mathcal{T}_{2})=\{13\}$ defines $\mathcal{T}_{2}$.
\end{example}

\begin{remark}
The interpretation of $\overline{g}$ for separated collections is as follows. We have that $\overline{g}(\mathcal{Q})$ is the triangulation $\mathcal{T}$ such that \[\simp(\mathcal{T}) = \left\lbrace [n]\setminus A \mathrel{\Big|}  A \in \mathcal{C}\cap \subs{n}{n-\floor{\delta/2} - 1}{}\right\rbrace.\]
\end{remark}

\subsection{Admissible orders}\label{sect:vis}

In this section we give a way of defining the map $g$ while interpreting the elements of the higher Bruhat orders as equivalence classes of admissible orders. We use the following notions, which were used in \cite{dm-h} to define the higher Tamari orders.

Let $\alpha$ be an admissible order of $\subs{n}{\delta + 1}{}$ and $I \in \subs{n}{\delta + 1}{}$. Given $e \in [n]\setminus I$, we say that $I$ is \emph{invisible in $P(I \cup \{e\})$} if either
\begin{itemize}
\item $I \cup \{e\} \notin \mathrm{inv}(\alpha)$ and $e$ is an odd gap in $I$, or
\item $I \cup \{e\} \in \mathrm{inv}(\alpha)$ and $e$ is an even gap in $I$.
\end{itemize}
Otherwise, we say that $I$ is \emph{coinvisible in $P(I \cup \{e\})$}. (We note that $I$ being invisible in $P(I \cup \{e\})$ is equivalent to $e$ being \emph{externally semi-active} with respect to $I$, in the terminology of \cite{gpw}, which applies to more general matroids.)

Then:
\begin{itemize}
\item We say that $I$ is \emph{invisible in $\alpha$} if there is a $e \in [n]\setminus I$ such that $I$ is invisible in $P(I \cup \{e\})$.
\item We say that $I$ is \emph{coinvisible in $\alpha$} if there is a $e \in [n]\setminus I$ such that $I$ is coinvisible in $P(I \cup \{e\})$.
\item We say that $I$ is \emph{visible in $\alpha$} if there is no $e \in [n]\setminus I$ such that $I$ is invisible in $P(I \cup \{e\})$. (Note that this is not the same notion of visibility as in \cite[Section 9]{dkk-survey}.)
\item We say that $I$ is \emph{covisible in $\alpha$} if there is no $e \in [n]\setminus I$ such that $I$ is coinvisible in $P(I \cup \{e\})$.
\end{itemize}

Given an admissible order $\alpha$ of $\binom{[n]}{\delta + 1}$, we use $V(\alpha)$ to denote the elements of $\binom{[n]}{\delta+1}$ which are visible in $\alpha$ and $\overline{V}(\alpha)$ to denote the elements of $\binom{[n]}{\delta+1}$ which are covisible in $\alpha$. (In \cite{dm-h-simplex}, visible elements are labelled in blue; covisible elements are labelled in red; and elements which are neither visible nor covisible are labelled in green.)

Given an admissible order $\alpha$ of $\binom{[n]}{\delta+1}$, we write $\mathcal{Q}_{\alpha}$ for the corresponding cubillage of $Z(n,\delta+1)$.

\begin{proposition}\label{prop-vis-init-vert}
Let $\alpha$ be an admissible order of $\subs{n}{\delta+1}{}$ and $I \in \subs{n}{\delta+1}{}$. Then the cube in $\mathcal{Q}_{\alpha}$ with generating set $I$ has initial vertex $\xi_{E}$, where \[E = \{e \in [n]\setminus I \mid I \text{ is invisible in } P(I \cup \{e\} )\}.\]
\end{proposition}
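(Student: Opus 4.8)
The plan is to derive Proposition~\ref{prop-vis-init-vert} directly from the combinatorial description of the initial vertex of a cube in a cubillage given at the end of Section~\ref{sect:admissible_orders}, namely the characterisation stating that, for a cube $\Delta$ of $\mathcal{Q}_{\alpha}$ with generating set $I$ and initial vertex $\xi_{E}$, and for $e \in [n] \setminus I$, one has $e \in E$ if and only if either $I \cup \{e\} \notin \mathrm{inv}(\alpha)$ and $e$ is an odd gap in $I$, or $I \cup \{e\} \in \mathrm{inv}(\alpha)$ and $e$ is an even gap in $I$.

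First I would observe that this pair of conditions is \emph{verbatim} the definition of ``$I$ is invisible in $P(I \cup \{e\})$'' introduced in Section~\ref{sect:vis}. Hence the set $E$ appearing in that characterisation is exactly $\{e \in [n] \setminus I \mid I \text{ is invisible in } P(I \cup \{e\})\}$, which is the claimed description. So the entire content of the proposition is the recognition that the two notions coincide by definition; the proof is essentially a one-line dereferencing.

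The one point that needs a word of care is the sign convention: the excerpt notes that the combinatorial description of the initial vertex in Section~\ref{sect:admissible_orders} follows from \cite[Theorem 2.1]{thomas-bst} ``if one swaps the sign convention for $\delta + 1$ odd'', and the definition of invisibility in Section~\ref{sect:vis} is stated with the matching convention. So I would make explicit that we are using the same sign convention in both places, so that the two sets of bullet conditions are literally identical, and then conclude. I do not anticipate a genuine obstacle here — the ``hard part'', such as it is, is purely bookkeeping: confirming that the odd/even gap conditions line up exactly and that $\mathcal{Q}_{\alpha}$ (the cubillage corresponding to the equivalence class $[\alpha]$) is well-defined independently of the representative $\alpha$, which holds because $\mathrm{inv}(\alpha)$ depends only on $[\alpha]$ and the gap conditions depend only on $I$, $e$, and $\mathrm{inv}(\alpha)$.

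Concretely, the proof would read: by the description of initial vertices recalled in Section~\ref{sect:admissible_orders}, the cube of $\mathcal{Q}_{\alpha}$ with generating set $I$ has initial vertex $\xi_{E}$ where $e \in E$ iff $I \cup \{e\} \notin \mathrm{inv}(\alpha)$ and $e$ is an odd gap in $I$, or $I \cup \{e\} \in \mathrm{inv}(\alpha)$ and $e$ is an even gap in $I$; by definition this says precisely that $e \in E$ iff $I$ is invisible in $P(I \cup \{e\})$, which is the desired formula for $E$.
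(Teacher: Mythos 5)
Your proposal is correct and matches the paper's own proof, which likewise cites the correspondence between admissible orders and cubillages recalled in Section~\ref{sect:admissible_orders} (via \cite{thomas-bst}) and notes that the two bullet-point conditions coincide by definition. The only difference is that you spell out the coincidence explicitly and flag the sign-convention point, which the paper leaves implicit.
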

\begin{proof}
This follows immediately from the correspondence between admissible orders and cubillages in \cite{thomas-bst}, as described in Section~\ref{sect-hbo}.
\end{proof}

The following result was noted in \cite[Appendix B]{dkk-survey}.

\begin{corollary}\label{cor-vis-init-vert}
Let $\alpha$ be an admissible order of $\subs{n}{\delta+1}{}$ and $I \in \subs{n}{\delta+1}{}$. Then $I \in V(\alpha)$ if and only if the cube in $\mathcal{Q}_{\alpha}$ with generating set $I$ has initial vertex $\xi_{\emptyset}$.
\end{corollary}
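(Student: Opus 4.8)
The plan is to read off the statement directly from Proposition~\ref{prop-vis-init-vert}, which already does all the real work by identifying the initial vertex of the cube with generating set $I$ as $\xi_{E}$ for the explicit set
\[
E = \{e \in [n]\setminus I \mid I \text{ is invisible in } P(I \cup \{e\})\}.
\]
First I would recall that, by definition, $I \in V(\alpha)$ precisely when $I$ is visible in $\alpha$, meaning there is \emph{no} $e \in [n]\setminus I$ for which $I$ is invisible in $P(I \cup \{e\})$. This is exactly the condition $E = \emptyset$.

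So the argument is: apply Proposition~\ref{prop-vis-init-vert} to obtain that the cube of $\mathcal{Q}_{\alpha}$ with generating set $I$ has initial vertex $\xi_{E}$; then observe that $\xi_{E} = \xi_{\emptyset}$ if and only if $E = \emptyset$ (the map $A \mapsto \xi_{A}$ being injective on subsets of $[n]$, since the $\xi_{i}$ are linearly independent enough — indeed $\xi_{A} = \sum_{a \in A}\xi_{a}$ determines $A$ by looking at coordinates, or simply because $\xi_{A}$ is a distinct vertex of $Z(n,n)$ for each $A$). Chaining these two equivalences with the definitional one above gives $I \in V(\alpha) \iff E = \emptyset \iff$ the initial vertex is $\xi_{\emptyset}$, as required.

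There is no genuine obstacle here; the only thing to be slightly careful about is that the definition of ``visible in $\alpha$'' quantifies over all $e \in [n]\setminus I$ with a negated existential, so that ``$I$ visible'' translates to ``$E$ empty'' rather than to any weaker condition, and that one should briefly justify why $\xi_{E} = \xi_{\emptyset}$ forces $E = \emptyset$. Both points are immediate from the setup in Section~\ref{sect-hbo}, so the proof will be a single short paragraph citing Proposition~\ref{prop-vis-init-vert}.
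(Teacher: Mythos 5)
Your argument is exactly the intended one: the corollary is an immediate consequence of Proposition~\ref{prop-vis-init-vert} combined with the definition of visibility (namely, $I \in V(\alpha)$ if and only if the set $E$ from that proposition is empty), and the paper treats it as such without further comment. Your added remark about injectivity of $A \mapsto \xi_{A}$, while not spelled out in the paper, is correct and only makes the routine step more explicit.
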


This gives us yet another interpretation of the map $g$.

\begin{corollary}\label{cor:g_vis_interp}
Given $[\alpha] \in \mathcal{B}(n,\delta+1)$, we have that $g(\mathcal{Q}_{\alpha})$ is the triangulation with \[\{|A| \mid A \in V(\alpha)\}\] as its set of $\delta$-simplices.
\end{corollary}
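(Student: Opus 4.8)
The plan is to assemble this corollary directly from the results already established earlier in this subsection, treating it as essentially a dictionary translation. The map $g$ was defined in Section~\ref{sect:g_geom} by $g(\mathcal{Q}) = \mathcal{Q} \cap H_{n}$, and the proof of Lemma~\ref{lem:vertexFigTriang} identifies the $\delta$-simplices of $g(\mathcal{Q})$ precisely as the intersections $\Delta \cap H_{n}$, where $\Delta$ ranges over the cubes of $\mathcal{Q}$ whose initial vertex is $\xi_{\emptyset}$ (this is exactly the reasoning used in the proof of Lemma~\ref{lem-all-simps}: such an intersection is nonempty and full-dimensional only when the initial vertex lies on $H_{n}$, i.e.\ equals $\xi_{\emptyset}$, in which case $\Delta \cap H_{n} = |A|$ for $A$ the generating set of $\Delta$).

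So the first step is to recall that the cubes of $\mathcal{Q}_{\alpha}$ are in bijection with $\binom{[n]}{\delta+1}$ via their generating sets, as noted in Section~\ref{sect-hbo}, and that under this bijection the $\delta$-simplices of $g(\mathcal{Q}_{\alpha})$ correspond to exactly those $I$ for which the cube with generating set $I$ has initial vertex $\xi_{\emptyset}$. The second step is to invoke Corollary~\ref{cor-vis-init-vert}, which says precisely that this set of generating sets is $V(\alpha)$. Combining these two facts gives that the $\delta$-simplices of $g(\mathcal{Q}_{\alpha})$ are exactly $\{|A| \mid A \in V(\alpha)\}$. Finally, one observes that a triangulation of $C(n,\delta)$ is determined by its $\delta$-simplices (every face of the triangulation is a face of some $\delta$-simplex, as used already in the proof of Lemma~\ref{lem-all-simps}), so specifying this set of maximal simplices indeed pins down the triangulation $g(\mathcal{Q}_{\alpha})$.

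I do not anticipate a genuine obstacle here, since all the work has been done in Proposition~\ref{prop-vis-init-vert} and Corollary~\ref{cor-vis-init-vert}; the only mild care needed is to make the passage between ``cube with initial vertex $\xi_{\emptyset}$'' and ``$\delta$-simplex of $g(\mathcal{Q}_{\alpha})$'' explicit, which is just the observation extracted from the proof of Lemma~\ref{lem:vertexFigTriang} that $\pi_{n,\delta+1}$ restricts to a bijection $\mathcal{Q}_{\alpha} \cap H_{n} \to C(n,\delta)$, so that the full-dimensional faces of $g(\mathcal{Q}_{\alpha})$ are exactly the $H_{n}$-slices of the cubes of $\mathcal{Q}_{\alpha}$ that meet $H_{n}$ in dimension $\delta$. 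The proof should therefore be only a few lines, chaining Corollary~\ref{cor-vis-init-vert} with the description of $g$ in terms of which cubes survive intersection with $H_{n}$.

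\begin{proof}
By the proof of Lemma~\ref{lem:vertexFigTriang}, the map $\pi_{n, \delta + 1}$ restricts to a bijection from $g(\mathcal{Q}_{\alpha}) = \mathcal{Q}_{\alpha} \cap H_{n}$ to $C(n, \delta)$, so the $\delta$-simplices of $g(\mathcal{Q}_{\alpha})$ are exactly the sets $\Delta \cap H_{n}$ where $\Delta$ is a cube of $\mathcal{Q}_{\alpha}$ meeting $H_{n}$ in a $\delta$-dimensional face. As in the proof of Lemma~\ref{lem-all-simps}, a cube $\Delta$ with generating set $I$ and initial vertex $\xi_{E}$ meets $H_{n}$ in a $\delta$-simplex if and only if $E = \emptyset$, in which case $\Delta \cap H_{n} = |I|$. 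Recalling from Section~\ref{sect-hbo} that the cubes of $\mathcal{Q}_{\alpha}$ are in bijection with $\binom{[n]}{\delta + 1}$ via their generating sets, we conclude that the $\delta$-simplices of $g(\mathcal{Q}_{\alpha})$ are precisely $\{|I| \mid I \in \binom{[n]}{\delta + 1}$ and the cube in $\mathcal{Q}_{\alpha}$ with generating set $I$ has initial vertex $\xi_{\emptyset}\}$. By Corollary~\ref{cor-vis-init-vert}, this index set is exactly $V(\alpha)$, so the $\delta$-simplices of $g(\mathcal{Q}_{\alpha})$ are $\{|A| \mid A \in V(\alpha)\}$. Since a triangulation of $C(n, \delta)$ is the union of its $\delta$-simplices together with their faces, this determines $g(\mathcal{Q}_{\alpha})$.
\end{proof}
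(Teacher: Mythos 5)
Your proof is correct and follows essentially the same route the paper intends (the paper leaves this corollary without an explicit proof, presenting it as an immediate consequence of Corollary~\ref{cor-vis-init-vert}). You have simply made the implicit intermediate step explicit: a cube of $\mathcal{Q}_{\alpha}$ meets $H_n$ in a $\delta$-simplex precisely when its initial vertex is $\xi_{\emptyset}$, which is exactly the observation already used in the proofs of Lemma~\ref{lem-all-simps} and Proposition~\ref{prop:covRel}.
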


\begin{example}
We continue from Example~\ref{ex:vertex_figures} and Example~\ref{ex:g_comb} and illustrate how the map $g$ can also be characterised using visibility.

We consider $\mathcal{Q}_{1}$ first. By labelling the cubes of $\mathcal{Q}_{1}$ with the elements of $\binom{[4]}{2}$, as shown in Figure~\ref{fig:q1_cubes}, it can be seen that the admissible order corresponding to $\mathcal{Q}_{1}$ is \[\alpha_{1} = \{23<13<12<14<24<34\}.\] We compute that $\mathrm{inv}(\alpha_{1})=\{123\}$.

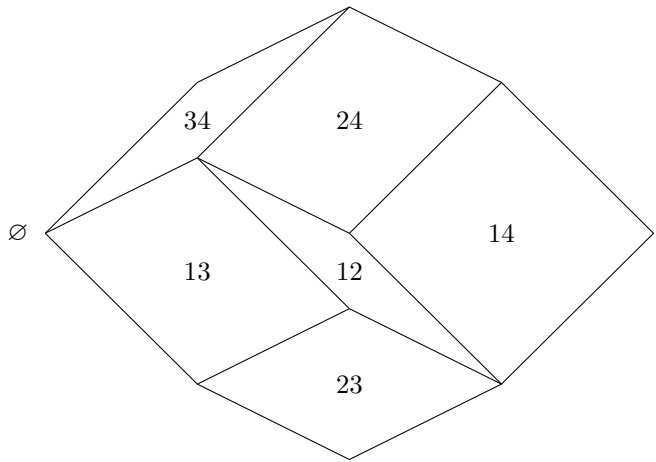
\begin{figure}
\caption{$\mathcal{Q}_{1}$ with its cubes labelled.}\label{fig:q1_cubes}
\[
\begin{tikzpicture}


\coordinate(0) at (0,0);
\node at (0)[left = 1mm of 0]{$\emptyset$};
\coordinate(1) at (2,-2);
\coordinate(12) at (4,-3);
\coordinate(123) at (6,-2);
\coordinate(1234) at (8,0);
\coordinate(234) at (6,2);
\coordinate(34) at (4,3);
\coordinate(4) at (2,2);


\draw (0) -- (1) -- (12) -- (123) -- (1234) -- (234) -- (34) -- (4) -- (0);


\coordinate(3) at (2,1);
\coordinate(13) at (4,-1);
\coordinate(23) at (4,0);


\node at (2,-0.5){13};
\node at (2,1.5){34};
\node at (4,-2){23};
\node at (4,-0.5){12};
\node at (4,1.5){24};
\node at (6,0){14};


\draw (0) -- (3);
\draw (3) -- (34);
\draw (3) -- (23);
\draw (3) -- (13);
\draw (1) -- (13);
\draw (13) -- (123);
\draw (23) -- (123);
\draw (23) -- (234);

\end{tikzpicture}
\]
\end{figure}

We can then analyse which elements of $\binom{[4]}{2}$ are visible in $\alpha_{1}$:
\begin{itemize}
\item 23: invisible because $123 \in \mathrm{inv}(\alpha_{1})$ and $1$ is an even gap in $23$;
\item 13: visible;
\item 12: invisible because $123 \in \mathrm{inv}(\alpha_{1})$ and $3$ is an even gap in $12$;
\item 14: invisible because $124 \notin \mathrm{inv}(\alpha_{1})$ and $2$ is an odd gap in $14$;
\item 24: invisible because $234 \notin \mathrm{inv}(\alpha_{1})$ and $3$ is an odd gap in $24$;
\item 34: visible. 
\end{itemize}
Note that, as Corollary~\ref{cor-vis-init-vert} shows, 13 and 34 are precisely the cubes with $\xi_{\emptyset}$ as their initial vertex. Furthermore, as Corollary~\ref{cor:g_vis_interp} shows, $g(\mathcal{Q}_{1})=\mathcal{T}_{1}$ is the triangulation with $1$-simplices $|13|$ and $|34|$.

We now conduct the same analysis of $\mathcal{Q}_{2}$. The admissible order corresponding to $\mathcal{Q}_{2}$ is \[\alpha_{2}=\{123<124<134<234\}.\] It is easy to see that $\mathrm{inv}(\alpha_{2})=\emptyset$. Hence the visible elements of $\binom{[4]}{3}$ in $\alpha_{2}$ are as follows:
\begin{itemize}
\item 123: visible;
\item 124: invisible because $1234 \notin \mathrm{inv}(\alpha_{2})$ and $3$ is an odd gap in $124$;
\item 134: visible;
\item 234: invisible because $1234 \notin \mathrm{inv}(\alpha_{2})$ and $1$ is an odd gap in $234$.
\end{itemize}
Again, it can be seen in Figure~\ref{fig:q2} that 123 and 134 are precisely the cubes with $\xi_{\emptyset}$ as their initial vertex, as shown by Corollary~\ref{cor-vis-init-vert}. Moreover, as Corollary~\ref{cor:g_vis_interp} shows, $g(\mathcal{Q}_{2})=\mathcal{T}_{2}$ is the triangulation with $2$-simplices $|123|$ and $|134|$.
\end{example}

The dual statements to Proposition~\ref{prop-vis-init-vert}, Corollary~\ref{cor-vis-init-vert}, and Corollay~\ref{cor:g_vis_interp} are as follows.

\begin{proposition}
Let $\alpha$ be an admissible order of $\subs{n}{\delta+1}{}$ and $I \in \subs{n}{\delta+1}{}$. Then the cube in $\mathcal{Q}_{\alpha}$ with generating set $I$ has final vertex $\xi_{F}$ where \[F = [n]\setminus\{e \in [n]\setminus I \mid I \text{ is coinvisible in } P(I \cup \{e\})\}.\]
\end{proposition}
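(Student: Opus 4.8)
The plan is to reduce the statement to Proposition~\ref{prop-vis-init-vert} together with the elementary fact, recalled in Section~\ref{sect-hbo}, that a cube of a cubillage is determined by its initial vertex and its set of generating vectors. Concretely, if a cube $\Delta$ of $\mathcal{Q}_{\alpha}$ has generating set $I$ and initial vertex $\xi_{E}$, then, being the Minkowski sum of the segments $\overline{\xi_{E}\xi_{E \cup \{a_{i}\}}}$ for $a_{i} \in I$, it has final vertex $\xi_{E \cup I}$. So it suffices to show that $E \cup I$ equals the set $F$ in the statement.

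First I would invoke Proposition~\ref{prop-vis-init-vert} to write $E = \{e \in [n]\setminus I \mid I \text{ is invisible in } P(I \cup \{e\})\}$. Next I would observe that, directly from the definitions recalled just before the statement, for every $e \in [n]\setminus I$ the set $I$ is either invisible or coinvisible in $P(I \cup \{e\})$, and that these two alternatives are mutually exclusive: each is the logical negation of the other, since it is governed by the dichotomy ``$e$ is an odd gap in $I$'' versus ``$e$ is an even gap in $I$'' together with the dichotomy $I \cup \{e\} \in \mathrm{inv}(\alpha)$ versus $I \cup \{e\} \notin \mathrm{inv}(\alpha)$. Hence $[n]\setminus I$ is the disjoint union of $E$ and $E' := \{e \in [n]\setminus I \mid I \text{ is coinvisible in } P(I \cup \{e\})\}$.

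The conclusion is then pure set arithmetic: $[n]$ is the disjoint union of $I$, $E$, and $E'$, so $E \cup I = [n]\setminus E' = F$, which is exactly the claimed final vertex. I do not expect any real obstacle here; the only points requiring care are the bookkeeping of which of ``invisible'' and ``coinvisible'' pairs with ``initial vertex'' and which with ``final vertex'', and verifying that the invisible/coinvisible dichotomy within a single packet $P(I \cup \{e\})$ is genuinely both exhaustive and exclusive, which is immediate from the definitions. Alternatively, one could give the proof entirely by dualising the argument for Proposition~\ref{prop-vis-init-vert}, applying the correspondence of \cite{thomas-bst} with the roles of initial and final vertex (equivalently, the sign convention for $\delta + 1$) interchanged.
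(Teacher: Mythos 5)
Your proof is correct and captures exactly what the paper means by presenting this as the ``dual statement'' of Proposition~\ref{prop-vis-init-vert} (the paper gives no separate argument). Your reduction — that the cube with initial vertex $\xi_{E}$ and generating set $I$ has final vertex $\xi_{E \cup I}$, that coinvisibility is by definition the negation of invisibility, and that therefore $[n]$ is the disjoint union $I \sqcup E \sqcup E'$, giving $E \cup I = [n] \setminus E' = F$ — makes the dualisation concrete and is the right way to spell it out.
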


\begin{corollary}
Let $\alpha$ be an admissible order of $\subs{n}{\delta+1}{}$ and $I \in \subs{n}{\delta+1}{}$. Then $I \in \overline{V}(\alpha)$ if and only if the cube in $\mathcal{Q}_{\alpha}$ with generating set $I$ has final vertex $\xi_{[n]}$.
\end{corollary}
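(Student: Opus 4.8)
The plan is to deduce the corollary directly from the preceding proposition, in exactly the same way that Corollary~\ref{cor-vis-init-vert} follows from Proposition~\ref{prop-vis-init-vert}. By the preceding proposition, the cube of $\mathcal{Q}_{\alpha}$ with generating set $I$ has final vertex $\xi_{F}$, where
\[
F = [n] \setminus \{e \in [n]\setminus I \mid I \text{ is coinvisible in } P(I \cup \{e\})\}.
\]
By definition, $I \in \overline{V}(\alpha)$ means that there is no $e \in [n]\setminus I$ for which $I$ is coinvisible in $P(I \cup \{e\})$, that is, that the set $\{e \in [n]\setminus I \mid I \text{ is coinvisible in } P(I \cup \{e\})\}$ is empty. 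Substituting into the formula above, this holds if and only if $F = [n] \setminus \emptyset = [n]$.

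To finish, I would only need to observe that the final vertex of a cube is a well-defined single vertex of the cubillage, and that the assignment $E \mapsto \xi_{E}$ identifying subsets of $[n]$ with points of $Z(n,n)$ is injective; hence $\xi_{F} = \xi_{[n]}$ precisely when $F = [n]$. Combining this with the previous paragraph yields the desired equivalence: $I \in \overline{V}(\alpha)$ if and only if the cube with generating set $I$ has final vertex $\xi_{[n]}$.

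There is essentially no obstacle here, since all of the substantive content is carried by the preceding proposition (which is itself the complement-dual of Proposition~\ref{prop-vis-init-vert}, and hence ultimately a restatement of the relevant result of \cite{thomas-bst}). A slightly more conceptual alternative would be to apply Corollary~\ref{cor-vis-init-vert} to the cubillage obtained by reflecting $\mathcal{Q}_{\alpha}$ through the centre of $Z(n,n)$, which swaps initial and final vertices of each cube and interchanges visibility with covisibility; but the direct substitution above is the shortest path and is the one I would write up.
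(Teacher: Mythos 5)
Your argument is correct and is exactly the deduction the paper intends: the corollary follows from the preceding dual proposition by noting that $F=[n]$ precisely when the set of $e$ for which $I$ is coinvisible in $P(I\cup\{e\})$ is empty, which is the definition of $I\in\overline{V}(\alpha)$. The paper itself states the dual proposition and corollaries without explicit proof, so your substitution argument (mirroring how Corollary~\ref{cor-vis-init-vert} follows from Proposition~\ref{prop-vis-init-vert}) is precisely the intended route.
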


\begin{corollary}
Given $[\alpha] \in \mathcal{B}(n,\delta+1)$, we have that $\overline{g}(\mathcal{Q}_{\alpha})$ is the triangulation with \[\{|A| \mid A \in \overline{V}(\alpha)\}\] as its set of $\delta$-simplices.
\end{corollary}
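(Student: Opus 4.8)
The plan is to deduce this as the dual of Corollary~\ref{cor:g_vis_interp}, in exactly the way that corollary follows from Proposition~\ref{prop-vis-init-vert} and Corollary~\ref{cor-vis-init-vert}. First I would invoke Remark~\ref{rmk:g_dual}, which identifies $\overline{g}(\mathcal{Q}_{\alpha})$ with $\mathcal{Q}_{\alpha} \cap \overline{H}_{n}$, the vertex figure of $\mathcal{Q}_{\alpha}$ at $\xi_{[n]}$; as in Lemma~\ref{lem:vertexFigTriang} this is a triangulation of $C(n,\delta)$, now obtained by slicing ``from the top''.

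Next I would pin down which cubes of $\mathcal{Q}_{\alpha}$ contribute a $\delta$-simplex to $\overline{g}(\mathcal{Q}_{\alpha})$. Dualising the computation in the proof of Proposition~\ref{prop:covRel}: if a face $\Delta$ of $Z(n,n)$ has final vertex $\xi_{F}$, then every $(y_{1}, \dots, y_{n}) \in \Delta$ satisfies $y_{1} \leqslant \# F$, so $\Delta \cap \overline{H}_{n}$ is empty when $\# F < n - 1$, is the single vertex $\xi_{F}$ when $\# F = n - 1$, and is a $\delta$-simplex precisely when $F = [n]$. If $\Delta$ is the cube with generating set $A$ and final vertex $\xi_{[n]}$, then its vertices lying on $\overline{H}_{n}$ are exactly the $\xi_{[n]\setminus\{a\}}$ for $a \in A$, so under the labelling $i \leftrightarrow \xi_{[n]\setminus\{i\}}$ of the vertices of the vertex figure this slice is the simplex $|A|$. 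Hence the $\delta$-simplices of $\overline{g}(\mathcal{Q}_{\alpha})$ are precisely the $|A|$ such that the cube of $\mathcal{Q}_{\alpha}$ with generating set $A$ has final vertex $\xi_{[n]}$.

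Finally I would apply the corollary stated immediately above, the dual of Corollary~\ref{cor-vis-init-vert}, which says that the cube of $\mathcal{Q}_{\alpha}$ with generating set $A$ has final vertex $\xi_{[n]}$ if and only if $A \in \overline{V}(\alpha)$. Combining this with the previous step yields that the set of $\delta$-simplices of $\overline{g}(\mathcal{Q}_{\alpha})$ is exactly $\{|A| \mid A \in \overline{V}(\alpha)\}$.

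The only step that is not pure formalism is the identification in the middle paragraph: one must check that the top slice of a cube with generating set $A$ and final vertex $\xi_{[n]}$ is really the cyclic-polytope simplex $|A|$ rather than some complement of it, since the natural indexing of the vertices of the vertex figure at $\xi_{[n]}$ sends $i$ to $\xi_{[n]\setminus\{i\}}$. This agrees with the complemented description of $\overline{g}$ recorded in the remark following Proposition~\ref{prop-comb-interp}, and once it is fixed the rest is routine dualisation.
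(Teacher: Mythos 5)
Your argument is correct and is exactly the dualisation the paper has in mind: the paper simply states this corollary (together with its two companions) as "the dual statements" and leaves the verification implicit, and what you have written is precisely that verification — slicing at $\overline{H}_n$, noting that only cubes with final vertex $\xi_{[n]}$ contribute a $\delta$-simplex, and invoking the dual of Corollary~\ref{cor-vis-init-vert}. Your caveat about the labelling $i \leftrightarrow \xi_{[n]\setminus\{i\}}$ is well placed and is indeed resolved by the complemented description of $\overline{g}$ in the remark following Proposition~\ref{prop-comb-interp}.
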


\section{Quotient maps of posets}\label{sect:quotient_framework}

Dimakis and M\"uller-Hoissen use the definition of the map $g$ from Section~\ref{sect:vis} to define the \emph{higher Tamari orders}. We restate their definition in the framework of quotient posets. In this section, we explain our approach to this notion.

Given a poset $(X, \leqslant)$ subject to an equivalence relation $\sim$, the \emph{quotient} $(X/{\sim}, R)$ is defined to be the set of $\sim$-equivalence classes $[x]$ of $X$, with the binary relation $R$ defined by $[x]R[y]$ if and only if there exist $x' \in [x]$ and $y' \in [y]$ such that $x' \leqslant y'$. The quotient of a poset is in general only a reflexive binary relation, not a partial order, since the relation $R$ is not necessarily transitive or anti-symmetric.

Previous authors have considered various different conditions on the equivalence relation $\sim$ which are sufficient to guarantee that the quotient $X/{\sim}$ is a poset. Stanley considers the case where $\sim$ is given by the orbits of a group of automorphisms \cite{stanley_quotient_peck,stanley-appl-alg-comb}. Two similar notions of congruence which also preserve lattice-theoretic properties are considered by Chajda and Sn\'a\v{s}el, and Reading \cite{cs_cong,reading_order}. Most recently, Hallam and Sagan \cite{hs-char-poly,hallam_applications} consider \emph{homogeneous quotients} in order to study the characteristic polynomials of lattices.

Whilst these conditions are sufficient to guarantee that the quotient poset is well-defined, none of them are necessary. In this paper we are interested only in the minimal conditions which provide that the quotient poset is well-defined, and not in whether the quotient also preserves other properties. These necessary and sufficient conditions are as follows.

\begin{proposition}\label{prop:tautology}
The quotient $X/{\sim}$ is a poset if and only if
\begin{enumerate}
\item if there exist $x_{1} \sim x$ and $y_{1} \sim y$ such that $x_{1} \leqslant y_{1}$, and $x_{2} \sim x$ and $y_{2} \sim y$ such that $x_{2} \geqslant y_{2}$, then $x \sim y$, and\label{op:antisym}
\item given $x, y, z \in X$ such that there exist $x_{1} \sim x$ and $y_{1} \sim y$ such that $x_{1} \leqslant y_{1}$, and $y_{2} \sim y$ and $z_{2} \sim z$ such that $y_{2} \leqslant z_{2}$, then there exist $x_{3} \sim x$ and $z_{3} \sim z$ such that $x_{3} \leqslant z_{3}$.\label{op:trans}
\end{enumerate}
\end{proposition}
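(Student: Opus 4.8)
The plan is to observe that the quotient relation $R$ is automatically reflexive, so that $X/{\sim}$ being a poset amounts exactly to $R$ being antisymmetric and transitive, and then to check that condition~\ref{op:antisym} is a verbatim restatement of antisymmetry of $R$ while condition~\ref{op:trans} is a verbatim restatement of transitivity of $R$. In this sense the proposition, as its label indicates, is a tautological translation of the poset axioms into statements about $\sim$-classes; I do not expect any genuine obstacle, and the only care needed is bookkeeping the direction of the inequalities (since $R$ is not assumed symmetric) and recording that $[x] = [y]$ is the same as $x \sim y$.

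First I would note that for any $x \in X$ we have $x \leqslant x$, hence $[x] R [x]$, so $R$ is reflexive with no hypotheses. It therefore remains to characterise when $R$ is antisymmetric and when $R$ is transitive. For antisymmetry: unwinding the definition of $R$, the hypotheses $[x] R [y]$ and $[y] R [x]$ say precisely that there exist $x_{1} \sim x$, $y_{1} \sim y$ with $x_{1} \leqslant y_{1}$ and $y_{2} \sim y$, $x_{2} \sim x$ with $y_{2} \leqslant x_{2}$, i.e. $x_{2} \geqslant y_{2}$; and the conclusion $[x] = [y]$ is the statement $x \sim y$. Thus ``$R$ is antisymmetric'' quantified over all $x, y \in X$ is literally condition~\ref{op:antisym}, and conversely condition~\ref{op:antisym} is exactly this implication, so the two are equivalent.

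For transitivity: again unwinding, the hypotheses $[x] R [y]$ and $[y] R [z]$ say that there exist $x_{1} \sim x$, $y_{1} \sim y$ with $x_{1} \leqslant y_{1}$ and $y_{2} \sim y$, $z_{2} \sim z$ with $y_{2} \leqslant z_{2}$, while the conclusion $[x] R [z]$ says there exist $x_{3} \sim x$, $z_{3} \sim z$ with $x_{3} \leqslant z_{3}$. Hence ``$R$ is transitive'' is literally condition~\ref{op:trans}. Combining the three observations, $X/{\sim}$ is reflexive always, is antisymmetric if and only if~\ref{op:antisym} holds, and is transitive if and only if~\ref{op:trans} holds, so it is a poset if and only if both conditions hold, which is the claim.
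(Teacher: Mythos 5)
Your proof is correct and takes essentially the same approach as the paper, which simply asserts that condition~(\ref{op:antisym}) is equivalent to antisymmetry of $R$ and condition~(\ref{op:trans}) to transitivity of $R$; you have just unwound the definitions explicitly and made the (implicitly assumed) reflexivity of $R$ precise.
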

\begin{proof}
Condition (\ref{op:antisym}) is equivalent to the relation $R$ being anti-symmetric. Condition (\ref{op:trans}) is equivalent to the relation $R$ being transitive.
\end{proof}

If both condition (\ref{op:antisym}) and condition (\ref{op:trans}) hold, then we write $\leqslant$ instead of $R$, to acknowledge that the relation gives us a partial order. In this case, we say that $\sim$ is a \emph{weak order congruence} on the poset $X$. Note that, in particular, order congruences \cite{reading_order,cs_cong} and the equivalence relations which give homogeneous quotients \cite{hs-char-poly,hallam_applications} are  weak order congruences.

If $\sim$ is a weak order congruence, so that $X/{\sim}$ is a poset, then we have a canonical order-preserving map
\begin{align*}
X &\rightarrow X/{\sim} \\
x &\mapsto [x].
\end{align*}
Indeed, for any order-preserving map of posets $f \colon X \to Y$, one can consider the equivalence relation on $X$ defined by $x \sim x'$ if and only if $f(x) = f(x')$. We then define the \emph{image} of $f$ to be the quotient $f(X) = X/{\sim}$. We identify the $\sim$-equivalence class $[x]$ of $X$ with the element $f(x) \in Y$, so that $f(X) \subseteq Y$ and the quotient relation on $f(X)$ is a subrelation of the partial order on $Y$. If the equivalence relation $\sim$ on $X$ is a weak order congruence, so that the image $f(X)$ is a well-defined poset, then we say that the map $f$ is \emph{photogenic}.

We say that a map $f \colon X \to Y$ is \emph{full} if whenever $f(x_{1}) \leqslant f(x_{2})$ in $Y$, there exist $x_{1}', x'_{2} \in X$ such that $x'_{1} \leqslant x'_{2}$, with $f(x'_{1}) = f(x_{1})$ and $f(x'_{2}) = f(x_{2})$. (In \cite{cs_cong}, maps which are full and order-preserving are called \emph{strong}.)

\begin{proposition}\label{prop:quotient_maps}
Let $X$ and $Y$ be posets with $f \colon X \to Y$ an order-preserving map. Then the relation on $f(X)$ is anti-symmetric. Furthermore, if $f$ is full, then the relation on $f(X)$ is transitive, and so $f$ is photogenic. Finally, $f(X) = Y$ as posets if and only if $f$ is surjective and full.
\end{proposition}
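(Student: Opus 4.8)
The plan is to reduce all three assertions to a single observation about how the quotient relation $R$ on $f(X)$ sits inside the partial order of $Y$, after identifying the $\sim$-class $[x]$ with the element $f(x)\in Y$ as in the preceding discussion. Since $f$ is order-preserving and $Y$ is a poset, whenever $[x]\,R\,[y]$ there are witnesses $x_{1}\sim x$ and $y_{1}\sim y$ with $x_{1}\leqslant y_{1}$, and applying $f$ gives $f(x)=f(x_{1})\leqslant f(y_{1})=f(y)$ in $Y$; so $[x]\,R\,[y]$ always implies $f(x)\leqslant f(y)$. The reverse implication, that $f(x)\leqslant f(y)$ in $Y$ forces $[x]\,R\,[y]$, is a verbatim restatement of the definition of fullness. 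Hence $f$ is full if and only if $R$ coincides with the restriction of the order of $Y$ to the subset $f(X)$.

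With this in hand the three claims follow quickly. For anti-symmetry: if $[x]\,R\,[y]$ and $[y]\,R\,[x]$, then $f(x)\leqslant f(y)$ and $f(y)\leqslant f(x)$ in $Y$, so $f(x)=f(y)$ by anti-symmetry of $Y$, i.e.\ $x\sim y$ and $[x]=[y]$; this uses only that $f$ is order-preserving, and is precisely condition~(\ref{op:antisym}) of Proposition~\ref{prop:tautology}. If moreover $f$ is full, then by the observation above $R$ is the restriction of $\leqslant$ on $Y$ to $f(X)$, which is transitive, verifying condition~(\ref{op:trans}) of Proposition~\ref{prop:tautology}; therefore $\sim$ is a weak order congruence and $f$ is photogenic. (One may alternatively check condition~(\ref{op:trans}) by hand: $[x]\,R\,[y]$ and $[y]\,R\,[z]$ give $f(x)\leqslant f(y)\leqslant f(z)$ in $Y$, and fullness then yields $x'\sim x$, $z'\sim z$ with $x'\leqslant z'$.)

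For the final equivalence, suppose first that $f$ is surjective and full. Surjectivity gives $f(X)=Y$ as sets, and fullness gives, via the observation, that the quotient relation on $f(X)=Y$ is exactly $\leqslant$; hence $f(X)=Y$ as posets (which in particular re-proves that $f(X)$ is a poset). Conversely, if $f(X)=Y$ as posets, then $f(X)=Y$ as sets, so $f$ is surjective; and the quotient relation on $f(X)$ is then all of $\leqslant$ on $Y$, so whenever $f(x_{1})\leqslant f(x_{2})$ in $Y$ we have $[x_{1}]\,R\,[x_{2}]$, which unwinds to the existence of $x_{1}'\sim x_{1}$ and $x_{2}'\sim x_{2}$ with $x_{1}'\leqslant x_{2}'$ --- precisely fullness.

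I do not expect a genuine obstacle here: the statement is essentially definitional, a companion to Proposition~\ref{prop:tautology}. The only points requiring care are keeping the identification of $[x]$ with $f(x)\in Y$ consistent, and parsing ``$f(X)=Y$ as posets'' as the conjunction of equality as sets and agreement of the relations (recalling that $R$ is \emph{a priori} only a subrelation of $\leqslant$ on $Y$). The substantive work --- verifying that the particular map $g$ is full --- is carried out later in Section~\ref{sect-quot}.
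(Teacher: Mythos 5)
Your proof is correct and follows essentially the same approach as the paper: both use the fact that order-preservation gives $[x]R[y] \Rightarrow f(x) \leqslant f(y)$ and that fullness gives the converse, then check anti-symmetry, transitivity, and the final equivalence in turn. The only difference is organizational --- you isolate the observation that (for order-preserving $f$) fullness is equivalent to $R$ being the restriction of $\leqslant_{Y}$ to $f(X)$ as a single lemma and derive everything from it, whereas the paper verifies each claim directly and only states that equivalence at the end; the mathematical content is identical.
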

\begin{proof}
Suppose that $x_{1}, x_{2} \in X$ are such that $[x_{1}]R[x_{2}]$ and $[x_{2}]R[x_{1}]$. Since $f$ is order-preserving, this implies that $f(x_{1}) \leqslant f(x_{2})$ and $f(x_{2}) \leqslant f(x_{1})$. Hence $f(x_{1}) = f(x_{2})$ and so $x_{1} \sim x_{2}$. Thus $R$ is anti-symmetric. 

Now suppose that $f$ is full. Let $x_{1}, x_{2}, x_{3} \in X$ be such that $[x_{1}]R[x_{2}]$ and $[x_{2}]R[x_{3}]$. This implies that $f(x_{1}) \leqslant f(x_{2})$ and $f(x_{2}) \leqslant f(x_{3})$, since $f$ is order-preserving. Hence $f(x_{1}) \leqslant f(x_{3})$. Since $f$ is full, there exist $x'_{1}, x'_{3} \in X$ such that $x'_{1} \leqslant x'_{3}$, with $f(x'_{1}) = f(x_{1})$ and $f(x'_{3}) = f(x_{3})$. Hence $[x_{1}]R[x_{3}]$, and so $R$ is transitive.

Finally, it is clear that $f(X) = Y$ as sets if and only if $f$ is surjective. Then $f$ being full and order-preserving is equivalent to having $[x_{1}] \leqslant [x_{2}]$ in $f(X)$ if and only if $f(x_{1}) \leqslant f(x_{2})$ in $Y$.
\end{proof}

Therefore, every quotient of a poset by a weak order congruence gives an order-preserving map which is surjective and full, and, conversely, every order-preserving map which is surjective and full gives a quotient by a weak order congruence. Hence, if an order-preserving map $f$ is surjective and full, then we say that $f$ is a \emph{quotient map of posets}.

With this technical framework in mind, the \emph{higher Tamari order} $T(n,\delta+1)$ \cite{dm-h} is defined to be the image of the map $g \colon \mathcal{B}(n, \delta + 1) \to \mathcal{S}(n, \delta)$, or, explicitly, the quotient of $\mathcal{B}(n, \delta + 1)$ by the relation defined by $\mathcal{Q} \sim \mathcal{Q}'$ if and only if $g(\mathcal{Q}) = g(\mathcal{Q}')$. That this is equivalent to \cite[Definition 4.7]{dm-h} follows from Corollary~\ref{cor:g_vis_interp}. Note that it is not evident that $T(n, \delta + 1)$ is a well-defined poset, since it is not clear that the map $g$ is photogenic. However, in Section~\ref{sect-quot} we shall prove that $g$ is full, which implies that $g$ is photogenic by Proposition~\ref{prop:quotient_maps}, since we already know that $g$ is order-preserving by Corollary~\ref{cor-ord-pres}. In Section~\ref{sect-surj}, we give a new proof of the fact that $g$ is surjective, originally known from \cite[Theorem 3.5]{rs-baues}. Therefore, the results of the two subsequent sections entail the following theorem. 

\begin{theorem}\label{thm:quot}
The map $g \colon \mathcal{B}(n, \delta + 1) \to \mathcal{S}(n, \delta)$ is a quotient map of posets.
\end{theorem}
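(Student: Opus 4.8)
The plan is to establish Theorem~\ref{thm:quot} by verifying the two properties that, together with order-preservation (already known from Corollary~\ref{cor-ord-pres}), characterise a quotient map of posets: that $g$ is surjective and that $g$ is full. By Proposition~\ref{prop:quotient_maps}, these two facts suffice. I would handle surjectivity and fullness in separate sections, as the paper indicates.

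For surjectivity, I would argue as follows. Let $\mathcal{T}$ be an arbitrary triangulation of $C(n,\delta)$; I need to produce a cubillage $\mathcal{Q}$ of $Z(n,\delta+1)$ with $g(\mathcal{Q}) = \mathcal{T}$. By Lemma~\ref{lem-all-simps}, it is enough to build a cubillage $\mathcal{Q}$ whose spectrum $\mathrm{Sp}(\mathcal{Q})$ contains $\Sigma(\mathcal{T})$. The natural approach is to take $\Sigma(\mathcal{T})$ together with the subsets forced into every spectrum (the boundary subsets of $Z(n,\delta+1)$) and show this collection is $\delta$-separated, then extend it to a $\delta$-separated collection of maximal size using the theory of separated collections from \cite{gp,dkk}; by \cite[Theorem 2.7]{gp} this maximal collection is $\mathrm{Sp}(\mathcal{Q})$ for a unique cubillage $\mathcal{Q}$. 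The key point to check is that $\Sigma(\mathcal{T})$ is $\delta$-separated: if $A, B \in \Sigma(\mathcal{T})$ were $\delta$-interweaving, then by the characterisation of circuits of $C(n,\delta)$ via $\delta$-interweaving (from \cite{breen}, cited in Section~\ref{sect-hst}), some subsets $X \subseteq A$, $Y \subseteq B$ would form a circuit, contradicting that $|A|$ and $|B|$ lie in the common triangulation $\mathcal{T}$ (by \cite[Proposition 2.2]{rambau}). So $\Sigma(\mathcal{T})$ is $\delta$-separated, and the extension argument goes through.

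For fullness, the task is: given cubillages $\mathcal{Q}_1, \mathcal{Q}_2$ with $g(\mathcal{Q}_1) = \mathcal{T}_1 \leqslant \mathcal{T}_2 = g(\mathcal{Q}_2)$ in $\mathcal{S}(n,\delta)$, to find cubillages $\mathcal{Q}_1' \sim \mathcal{Q}_1$ and $\mathcal{Q}_2' \sim \mathcal{Q}_2$ with $\mathcal{Q}_1' \leqslant \mathcal{Q}_2'$ in $\mathcal{B}(n,\delta+1)$. I expect this to be the main obstacle, and the crux of the paper. The strategy I would pursue is to use the fine control over pre-images afforded by separated collections: one wants to choose pre-images of $\mathcal{T}_1$ and $\mathcal{T}_2$ whose internal spectra are ``compatible'' in the sense required for an inequality in $\mathcal{B}(n,\delta+1)$. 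One natural idea is to find a single pre-image $\mathcal{Q}$ of $\mathcal{T}_1$, and then, since $\mathcal{T}_1 \leqslant \mathcal{T}_2$ means $\mathcal{T}_2$ is reachable from $\mathcal{T}_1$ by a sequence of increasing flips, lift each such flip of triangulations to an increasing flip (or a sequence of increasing flips) of cubillages, staying within the fibres of $g$ as needed; Proposition~\ref{prop:covRel} controls how $g$ interacts with increasing flips and tells us exactly which flips of a cubillage descend to flips of the triangulation (namely those supported at a $(\delta+2)$-face whose initial vertex is $\xi_{\emptyset}$). The delicate part is to show that after performing the ``invisible'' flips (those with $g(\mathcal{Q}) = g(\mathcal{Q}')$, i.e.\ initial vertex $E \neq \emptyset$) one can always arrange the relevant $(\delta+2)$-face with initial vertex $\xi_{\emptyset}$ to be available for the next ``visible'' flip, so that the lifted sequence is increasing throughout. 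I would carry this out by again translating to separated collections: constructing, for the target triangulation $\mathcal{T}_2$, a specific pre-image built to contain an appropriate pre-image of $\mathcal{T}_1$ in its ``lower'' part, using the membrane/contraction machinery of \cite{dkk-interrelations} and the explicit description of internal points in $Z(n,\delta+1)$ versus internal simplices in $C(n,\delta)$ from Observation~\ref{obs:internal}. Once both pieces are in hand, Proposition~\ref{prop:quotient_maps} immediately yields that $g$ is a quotient map of posets.
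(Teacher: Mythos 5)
Your overall structure (surjectivity plus fullness, then Proposition~\ref{prop:quotient_maps}) is exactly the paper's, and your surjectivity argument is valid though it takes a different route: rather than exhibiting an explicit pre-image, you extend $\Sigma(\mathcal{T})$ (together with the boundary subsets) to a maximal $\delta$-separated collection and invoke the purity result of \cite{gp}. That works, and the observation that $\Sigma(\mathcal{T})$ is $\delta$-separated because any $\delta$-interweaving pair inside it would produce a circuit $(X,Y)$ violating \cite[Proposition 2.2]{rambau} is correct. The paper instead builds a \emph{canonical} pre-image $\mathcal{Q}_{\mathcal{T}}$ for even $\delta$ by taking $U(\mathcal{T}) = \{I \mid |\widehat{I}|\text{ is a }d'\text{-simplex of }\mathcal{T},\ d' \geqslant d\}$ as the internal spectrum (Lemmas~\ref{lem:cub_sep} and~\ref{lem:right_size}), and then reduces odd $\delta$ to even $\delta$ via the Rambau extension $\hat{\mathcal{T}}$ of $C(n+1,2d+2)$ and $(n+1)$-pie contraction (Theorem~\ref{thm-g-odd}). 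Your extension argument buys you less work up front, but it produces a non-canonical pre-image, and that has a cost you pay later.

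The cost shows up in fullness, which is where there is a genuine gap. Your first plan --- start from an \emph{arbitrary} pre-image $\mathcal{Q}$ of $\mathcal{T}_1$ and lift the covering flip $\mathcal{T}_1 \lessdot \mathcal{T}_2$ to a sequence of increasing flips of cubillages --- is not known to work, and I would not expect it to: there is no reason that a given pre-image of $\mathcal{T}_1$ is below \emph{any} pre-image of $\mathcal{T}_2$ in $\mathcal{B}(n,\delta+1)$. Fullness lets you choose both pre-images, and the paper's proof exploits this by using the canonical pre-images $\mathcal{Q}_{\mathcal{T}}$ and $\mathcal{Q}_{\mathcal{T}'}$ from the surjectivity construction and showing $\mathcal{Q}_{\mathcal{T}} \leqslant \mathcal{Q}_{\mathcal{T}'}$. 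That comparison is the real content of the theorem: one first needs a separated-collections characterisation of increasing flips (Theorem~\ref{thm-comb-flips}, that a flip replaces a single $A$ by a tightly $\delta$-interweaving $B$), then one parametrises $\mathcal{I}_l(S,n)$ and $\mathcal{I}_u(S,n)$ via the coordinate map $\phi$ (Construction~\ref{constr:bij}) and shows that performing the exchanges $\phi(\mathbf{n}) \leadsto \phi(\mathbf{n}+\mathbf{t})$ in the order of any linear extension of a suitable lattice keeps the collection $\delta$-separated at every step (Theorem~\ref{thm-even-quot}). Your fallback --- ``constructing a specific pre-image of $\mathcal{T}_2$ built to contain an appropriate pre-image of $\mathcal{T}_1$ in its lower part'' --- gestures at the right idea but does not identify the canonical pre-image, the exchange map, or the lattice ordering that makes the induction work, and none of it is obvious. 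You also do not address the even/odd split: the explicit construction $U(\mathcal{T})$ only makes sense for $\delta$ even, and the odd case of fullness (Theorem~\ref{thm-odd-quot}) requires tracking how increasing flips in $\mathcal{B}(n+1,2d+3)$ descend along $(n+1)$-pie contraction to either collapse or remain covering relations, which is a separate argument. As written, your proposal establishes surjectivity but leaves the fullness half of the theorem essentially unproved.
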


Hence, we obtain by Proposition~\ref{prop:quotient_maps} that the higher Tamari orders are indeed the same posets as the first higher Stasheff--Tamari orders.

\begin{corollary}\label{cor:t=st}
$T(n,\delta+1) \cong \mathcal{S}(n,\delta)$.
\end{corollary}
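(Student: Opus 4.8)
The plan is to deduce the corollary immediately from Theorem~\ref{thm:quot} and Proposition~\ref{prop:quotient_maps}, and then to indicate how I would prove Theorem~\ref{thm:quot} itself, which is where all the work lies. By definition $T(n,\delta+1)$ is the image of the order-preserving map $g\colon\mathcal{B}(n,\delta+1)\to\mathcal{S}(n,\delta)$, carrying the quotient relation as a subrelation of the order on $\mathcal{S}(n,\delta)$; that this agrees with \cite[Definition 4.7]{dm-h} is already recorded via Corollary~\ref{cor:g_vis_interp}. Theorem~\ref{thm:quot} says $g$ is a quotient map of posets, i.e.\ surjective and full, and the last assertion of Proposition~\ref{prop:quotient_maps} states that this is precisely the condition under which the image of an order-preserving map equals its codomain \emph{as posets}. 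Hence $T(n,\delta+1)=g(\mathcal{B}(n,\delta+1))=\mathcal{S}(n,\delta)$, and identifying each $\sim$-class with its image under $g$ gives the isomorphism. So the real content is Theorem~\ref{thm:quot}, which splits into surjectivity and fullness of $g$, and I sketch each.

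For surjectivity I would argue via separated collections. Fix a triangulation $\mathcal{T}$ of $C(n,\delta)$. By Lemma~\ref{lem-all-simps} it is enough to exhibit a cubillage $\mathcal{Q}$ of $Z(n,\delta+1)$ with $\mathrm{Sp}(\mathcal{Q})\supseteq\Sigma(\mathcal{T})$. The key claim is that $\Sigma(\mathcal{T})$ is already $\delta$-separated: if $A,B\in\Sigma(\mathcal{T})$ were $\delta$-interweaving, then the interweaving witnesses $i_{0}<\dots<i_{\delta+1}$ split into a subset of $A$ and a subset of $B$ of sizes $\floor{\delta/2}+1$ and $\ceil{\delta/2}+1$, hence constitute a circuit $(X,Y)$ with $X\subseteq A$ and $Y\subseteq B$ (using the description of circuits of $C(n,\delta)$ recalled in Section~\ref{sect-hst}); this contradicts the fact that two simplices of one triangulation cannot contain a circuit \cite[Proposition 2.2]{rambau}. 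Since every $\delta$-separated collection extends to one of maximal size \cite{gp}, we complete $\Sigma(\mathcal{T})$ to the spectrum of a cubillage $\mathcal{Q}$, and Lemma~\ref{lem-all-simps} then gives $g(\mathcal{Q})=\mathcal{T}$.

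Fullness is the step I expect to be the main obstacle, and it is where one must genuinely construct pre-image cubillages. I would reduce fullness to the following lifting property: for $\mathcal{T}_{1}\leqslant\mathcal{T}_{2}$ in $\mathcal{S}(n,\delta)$ and any $\mathcal{Q}_{1}$ with $g(\mathcal{Q}_{1})=\mathcal{T}_{1}$, there is a $\mathcal{Q}_{2}$ with $g(\mathcal{Q}_{2})=\mathcal{T}_{2}$ and $\mathcal{Q}_{1}\leqslant\mathcal{Q}_{2}$; taking $\mathcal{Q}'_{1}=\mathcal{Q}_{1}$ and $\mathcal{Q}'_{2}=\mathcal{Q}_{2}$ then witnesses fullness. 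Factoring $\mathcal{T}_{1}\leqslant\mathcal{T}_{2}$ into a chain of increasing flips and iterating, one reduces to a single cover $\mathcal{T}_{1}\lessdot\mathcal{T}_{2}$, given by a $(\delta+2)$-subset $A$ whose lower internal simplices lie in $\mathcal{T}_{1}$ and whose upper ones lie in $\mathcal{T}_{2}$. By the analysis in the proof of Proposition~\ref{prop:covRel}, such a flip of triangulations is induced by flipping a $(\delta+2)$-face of a cubillage whose initial vertex is $\xi_{\emptyset}$ and whose generating set is $A$.

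The remaining, and delicate, point is therefore purely about the fibre $g^{-1}(\mathcal{T}_{1})$: starting from the \emph{given} $\mathcal{Q}_{1}$, one must travel upward in $\mathcal{B}(n,\delta+1)$, staying inside $g^{-1}(\mathcal{T}_{1})$, to a cubillage that actually contains the $(\delta+2)$-face with initial vertex $\xi_{\emptyset}$ and generating set $A$ in its lower position; flipping that face then produces $\mathcal{Q}_{2}\gtrdot\mathcal{Q}_{1}$ with $g(\mathcal{Q}_{2})=\mathcal{T}_{2}$. This is exactly where the separated-collections machinery is needed: by Proposition~\ref{prop-comb-interp} the fibre $g^{-1}(\mathcal{T}_{1})$ consists of the cubillages $\mathcal{Q}$ with $\intsp(\mathcal{Q})\cap\binom{[n]}{\floor{\delta/2}+1}=\simp(\mathcal{T}_{1})$, and one has to understand which internal subsets of other sizes are forced and which are free, with enough precision to build such an upward path and to realise the required face. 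I expect the surjectivity argument above to be carried out in Section~\ref{sect-surj} and this fullness argument in Section~\ref{sect-quot}, after which Theorem~\ref{thm:quot}, and hence the corollary, follow as explained.
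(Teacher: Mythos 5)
Your deduction of the corollary itself is exactly the paper's: it is immediate from Theorem~\ref{thm:quot} together with the final clause of Proposition~\ref{prop:quotient_maps}, once one has Corollary~\ref{cor:g_vis_interp} to match $T(n,\delta+1)$ with $g(\mathcal{B}(n,\delta+1))$. The interesting content is in your sketch of Theorem~\ref{thm:quot}, and there your route diverges in a way that is partly a genuine improvement and partly a genuine gap.

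For surjectivity, your argument that $\Sigma(\mathcal{T})$ is $\delta$-separated is correct (the circuit description of $C(n,\delta)$ and \cite[Proposition 2.2]{rambau} give exactly the contradiction you want), and the paper's Lemma~\ref{lem:cub_sep} proves the analogous statement for the explicitly constructed collection $U(\mathcal{T})$. The divergence is that you then invoke \emph{purity}: that every $\delta$-separated collection extends to one of maximal size. This is a theorem, not a triviality (it is one of the main results in the purity literature around \cite{gp}), whereas the paper avoids appealing to it by building $U(\mathcal{T})$ and counting it (Lemma~\ref{lem:right_size}), which simultaneously produces a canonical preimage $\mathcal{Q}_{\mathcal{T}}$. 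That canonical preimage is not an optional extra in the paper: it is what makes the fullness argument go.

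That is where the gap in your proposal lies. You reduce fullness to the lifting property: given $\mathcal{T}_{1}\lessdot\mathcal{T}_{2}$ and \emph{any} $\mathcal{Q}_{1}\in g^{-1}(\mathcal{T}_{1})$, produce $\mathcal{Q}_{2}\geqslant\mathcal{Q}_{1}$ with $g(\mathcal{Q}_{2})=\mathcal{T}_{2}$, by travelling upward inside the fibre $g^{-1}(\mathcal{T}_{1})$ to a cubillage that contains the relevant $(\delta+2)$-face in lower position and then flipping it. You are right that this would imply fullness, and right that naive fullness-for-covers does not chain, so one needs either this lifting property or some fixed choice of preimage. But you neither prove the lifting property nor show it is true, and it is strictly stronger than what the paper establishes: it requires controlling how arbitrary members of the fibre sit relative to each other, i.e.\ a structural statement about $g^{-1}(\mathcal{T}_{1})$ that the paper never needs. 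The paper instead fixes the canonical preimage $\mathcal{Q}_{\mathcal{T}}$ (the output of the surjectivity construction) and shows directly that $\mathcal{T}\lessdot\mathcal{T}'$ implies $\mathcal{Q}_{\mathcal{T}}\leqslant\mathcal{Q}_{\mathcal{T}'}$, via an explicit flip sequence built from the parametrisation in Construction~\ref{constr:bij} (even case, Theorem~\ref{thm-even-quot}) and a contraction argument (odd case, Theorem~\ref{thm-odd-quot}); since the same $\mathcal{Q}_{\mathcal{T}}$ is reused at every step, the chain of covers composes with no lifting property needed. So the shape of your reduction is plausible, but as written it leaves the hardest step, the existence of the upward path inside the fibre, entirely unaddressed, and your purity-based surjectivity argument deprives you of the canonical preimage that the paper uses to sidestep exactly this difficulty.
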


\section{Surjectivity}\label{sect-surj}

We now give a new construction showing that the map $g$ is a surjection. Our strategy is to explicitly show that $g$ is a surjection when $\delta$ is even, and then to use this to deduce the case where $\delta$ is odd. Given a triangulation $\mathcal{T}$ of $C(n,2d)$, we will construct a cubillage $\mathcal{Q}_{\mathcal{T}}$ of $Z(n,2d+1)$ such that $g(\mathcal{Q}_{\mathcal{T}})=\mathcal{T}$. We will define $\mathcal{Q}_{\mathcal{T}}$ by specifying its internal spectrum.

\begin{convention}
In this section and in Section~\ref{sect-quot}, we will frequently be using arithmetic modulo $n$. In particular, given a set $S \in \binom{[n]}{2d + 2}$, we have $s_{0} - s_{2d + 1} \equiv s_{0} - s_{2d + 1} + n \pmod{n}$, which is an element of $[n]$.
\end{convention}

For $I \subseteq [n]$, we write $I = J \sqcup J'$ if $I = J \cup J'$ and there are no $j \in J, j' \in J'$ such that $j, j'$ are cyclically consecutive. Given a cyclic $l$-ple interval $I = [i_{0}, i'_{0}] \sqcup \dots \sqcup [i_{l-1}, i'_{l-1}]$, we use the notation $\widehat{I} := \{i_{0}, \dots, i_{l-1}\}$ from \cite{njw-jm}. We claim that the collection of subsets \[U(\mathcal{T})=\left\lbrace I \subseteq [n]\mid |\widehat{I}| \text{ is a } d'\text{-simplex of }\mathcal{T} \text{ for }d' \geqslant d\right\rbrace \] defines the internal spectrum of a cubillage on $Z(n, 2d + 1)$. This is similar to the construction in \cite[Theorem 3.8]{njw-jm}. In order to show that $U(\mathcal{T})$ is the internal spectrum of a cubillage, we must show that it is $2d$-separated and that $\# U(\mathcal{T})=\binom{n-1}{2d+1}$. We begin by showing that $U(\mathcal{T})$ is $2d$-separated, for which we need the following lemma. This generalises one direction of \cite[Lemma 3.7]{njw-jm}, although the proof in \textit{op.\ cit.}\ requires only minor changes. 

\begin{lemma}\label{lem-int-endpts}
Let $I,J \subseteq [n]$. Then $I$ $\delta$-interweaves $J$ only if there exist subsets $X \subseteq \widehat{I}$ and $Y \subseteq \widehat{J}$ such that $\# X = \floor{\delta/2}$ and $\# Y = \ceil{\delta/2}$, and $X$ $\delta$-interweaves $Y$. 
\end{lemma}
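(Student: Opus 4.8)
The plan is to pull each element of a witnessing sequence back to the left endpoint of its block. Assume $I$ $\delta$-interweaves $J$ and fix $i_{0} < i_{1} < \dots < i_{\delta+1}$ witnessing this, so that $i_{k} \in J \setminus I$ when $k \not\equiv \delta \pmod{2}$ and $i_{k} \in I \setminus J$ when $k \equiv \delta \pmod{2}$. For each $k$, let $e_{k}$ be the left endpoint of the maximal cyclic-interval block of $I$ containing $i_{k}$ if $i_{k} \in I$, and the left endpoint of the block of $J$ containing $i_{k}$ if $i_{k} \in J$; then $e_{k} \in \widehat{I}$, respectively $e_{k} \in \widehat{J}$, and the cyclic interval $[e_{k}, i_{k}]$ is contained in $I$, respectively $J$. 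I would set $X := \{ e_{k} \mid i_{k} \in I \}$ and $Y := \{ e_{k} \mid i_{k} \in J \}$, which lie in $\widehat{I}$ and $\widehat{J}$ respectively, and then argue that they have the required sizes and are $\delta$-interweaving.

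The key point is the inequality $i_{k-1} < e_{k} \leqslant i_{k}$ for $1 \leqslant k \leqslant \delta+1$. Here $i_{k-1}$ lies on the side ($I$ or $J$) opposite to the one containing the block $[e_{k}, i_{k}]$, so $i_{k-1} \notin [e_{k}, i_{k}]$; moreover this cyclic interval cannot wrap past $n$, since a wrapping interval $[e_{k}, i_{k}]$ would contain everything in $[n]$ that is $\leqslant i_{k}$, in particular $i_{k-1}$. Hence $[e_{k}, i_{k}] = \{e_{k}, e_{k}+1, \dots, i_{k}\}$, and $i_{k-1} < i_{k}$ forces $i_{k-1} < e_{k}$. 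Chaining gives $e_{1} < e_{2} < \dots < e_{\delta+1}$, so these are distinct, and the membership of each $e_{k}$ in $X$ or $Y$ matches that of the corresponding $i_{k}$.

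It remains to place $e_{0}$, for which I would split on whether the block $[e_{0}, i_{0}]$ wraps past $n$. If it does not, then $e_{0} \leqslant i_{0} < e_{1}$, so $e_{0} < e_{1} < \dots < e_{\delta+1}$ is strictly increasing and, reading off memberships, directly witnesses that $X$ $\delta$-interweaves $Y$. If it does wrap, then $[e_{0}, i_{0}] \supseteq \{1, \dots, i_{0}\}$, so it can contain no $i_{k}$ on the side opposite to $i_{0}$; this forces $i_{k} < e_{0}$ for all such $k$, and hence for all $k \geqslant 1$ (any $k \geqslant 1$ with $i_{k} \geqslant e_{0}$ would lie in the same block as $i_{0}$, dragging every larger $i_{k'}$ into that block and contradicting the opposite-side elements lying above them), so the $e_{k}$ occur cyclically in the order $e_{0}, e_{1}, \dots, e_{\delta+1}$, and re-indexing them into the linear order $e_{1} < \dots < e_{\delta+1} < e_{0}$ yields the required interweaving. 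I expect the wrap-around case to be the main obstacle: one must verify that the pull-back remains injective and that the cyclically rotated sequence still alternates sides in the exact pattern demanded by the definition of $\delta$-interweaving, and one must separately dispose of the degenerate configuration $n = \delta+2$, in which there is no room between $i_{\delta+1}$ and $i_{0}$; this is the step where the argument of \cite[Lemma 3.7]{njw-jm} requires its minor modifications.
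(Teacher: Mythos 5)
Your approach---replacing each witnessing element by the left endpoint of the block of $I$ or $J$ containing it---is the same as the paper's; the paper's chain $i_{t_0} \leqslant a_0 < j_{u_0} \leqslant b_0 < \cdots < j_{u_d} \leqslant b_d$ is your $e_0 < e_1 < \cdots < e_{\delta+1}$ in different notation. You are more explicit about the possibility that a block wraps past $n$, which the paper's inequalities tacitly assume does not occur.

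The wrap case is exactly where your proposal has a genuine gap, and it is irreparable when $\delta$ is odd. For odd $\delta$, $i_0$ and $i_{\delta+1}$ lie on the \emph{same} side, so a wrapping block containing $i_0$ may also contain $i_{\delta+1}$. Your argument that $i_k < e_0$ for all $k \geqslant 1$ then breaks at $k = \delta + 1$: there is no larger $i_{k'}$ on the opposite side to supply the contradiction you invoke, and one gets $e_0 = e_{\delta+1}$, too few distinct left endpoints. Correspondingly the rotated sequence $e_1 < \cdots < e_{\delta+1} < e_0$ would end with two consecutive elements on the $J$-side, so the alternation you deferred checking genuinely fails. In fact the statement itself is false for odd $\delta$: take $n = 7$, $I = \{4\}$, $J = \{1,2,6,7\}$; then $2 < 4 < 6$ witnesses $I$ $1$-interweaving $J$, yet $\widehat{J} = \{6\}$ is a singleton, so no $Y \subseteq \widehat{J}$ of size $\ceil{1/2}+1 = 2$ exists. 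Since the lemma is only ever invoked with $\delta = 2d$ (Lemma~\ref{lem:cub_sep}, Theorem~\ref{thm-even-quot}), the downstream results are unaffected, and for even $\delta$ your argument is sound, because then $i_0$ and $i_{\delta+1}$ lie on opposite sides, so a wrapping block of $i_0$ can contain no $i_k$ with $k \geqslant 1$. Two smaller points: the cardinalities in the statement should read $\floor{\delta/2}+1$ and $\ceil{\delta/2}+1$; and in the even-$\delta$ wrap case your rotated sequence begins on the $\widehat{J}$ side, so it witnesses $Y$ $\delta$-interweaving $X$ rather than the stated $X$ $\delta$-interweaving $Y$---harmless, since the applications use only the symmetric ``are $\delta$-interweaving''.
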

\begin{proof}
We let $\delta=2d$, since the case $\delta=2d+1$ behaves similarly.

Let $I = [i_{0}, i'_{0}] \sqcup \dots \sqcup [i_{r}, i'_{r}]$ and $J = [j_{0}, j'_{0}] \sqcup \dots \sqcup [j_{s}, j'_{s}]$. Suppose that $I$ $2d$-interweaves $J$, and let $A \subseteq I\setminus J$ and $B \subseteq J\setminus I$ witness this. For any $0\leqslant p < q\leqslant d$ we cannot have both $a_{p} \in [i_{t}, i'_{t}]$ and $a_{q} \in [i_{t}, i'_{t}]$, since this implies that $b_{p}, \dots, b_{q-1} \in [i_{t}, i'_{t}] \subseteq I$, which contradicts $B \cap I = \emptyset$. Hence, for all $0 \leqslant k \leqslant d$, let $t_{k}$ be such that $a_{k} \in [i_{t_{k}}, i'_{t_{k}}]$ and let $u_{k}$ be such that $b_{k} \in [j_{u_{k}}, j'_{u_{k}}]$. Moreover, since $B \cap I = \emptyset$, we have $b_{k} \in (i'_{t_{k}}, i_{t_{k+1}})$, and similarly $a_{k} \in (j'_{u_{k-1}}, j_{u_{k}})$ for $k \in \mathbb{Z}/(d + 1)\mathbb{Z}$. Then \[i_{t_{0}} \leqslant a_{0} < j_{u_{0}} \leqslant b_{0} < i_{t_{1}} \leqslant a_{1} < \dots < i_{t_{d}} \leqslant a_{d} < j_{u_{d}} \leqslant b_{d},\] and so \[ i_{t_{0}} < j_{u_{0}} < i_{t_{1}} < \dots < i_{t_{d}} < j_{u_{d}}.\] Letting $X=\{i_{t_{0}}, \dots, i_{t_{d}}\}$ and $Y=\{j_{u_{0}}, \dots, j_{u_{d}}\}$ gives us the desired result.
\end{proof}

\begin{lemma}\label{lem:cub_sep}
The collection $U(\mathcal{T})$ is $2d$-separated.
\end{lemma}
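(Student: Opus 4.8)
The plan is to argue by contradiction, using the endpoint lemma (Lemma~\ref{lem-int-endpts}) together with the fact that a triangulation of a cyclic polytope cannot contain both halves of a circuit (\cite[Proposition 2.2]{rambau}). Suppose that $I, J \in U(\mathcal{T})$ are $2d$-interweaving; by symmetry we may assume that $I$ $2d$-interweaves $J$, the other case being identical after exchanging the roles of $I$ and $J$.

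Applying Lemma~\ref{lem-int-endpts} with $\delta = 2d$ produces subsets $X \subseteq \widehat{I}$ and $Y \subseteq \widehat{J}$ such that $X$ $2d$-interweaves $Y$, with $\#X = \floor{2d/2} + 1 = d+1$ and $\#Y = \ceil{2d/2} + 1 = d+1$; moreover $X$ and $Y$ are disjoint, as is visible from the strict chain of inequalities in the proof of that lemma. By the characterisation of circuits of cyclic polytopes recalled in Section~\ref{sect-hst} (due to \cite{breen}), the pair $(X, Y)$ is therefore a circuit of $C(n, 2d)$.

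On the other hand, the definition of $U(\mathcal{T})$ says that $|\widehat{I}|$ and $|\widehat{J}|$ are both simplices of the triangulation $\mathcal{T}$. Since $X \subseteq \widehat{I}$ and $Y \subseteq \widehat{J}$, this means that $\mathcal{T}$ contains a simplex containing $X$ and a simplex containing $Y$ for the circuit $(X, Y)$ of $C(n, 2d)$ --- contradicting \cite[Proposition 2.2]{rambau}. Hence no such $I, J$ exist, and $U(\mathcal{T})$ is $2d$-separated.

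The real content is all in Lemma~\ref{lem-int-endpts}; here the only points needing care are the cardinality and disjointness bookkeeping required to certify $(X, Y)$ as a genuine circuit of $C(n, 2d)$, after which the incompatibility of circuits with triangulations closes the argument. I anticipate no genuine obstacle.
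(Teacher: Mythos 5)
Your proof is correct and follows essentially the same route as the paper: invoke Lemma~\ref{lem-int-endpts} to extract $X \subseteq \widehat{I}$ and $Y \subseteq \widehat{J}$ forming a circuit of $C(n,2d)$, then derive a contradiction with \cite[Proposition 2.2]{rambau} since $|\widehat{I}|$ and $|\widehat{J}|$ both lie in $\mathcal{T}$. You are a little more explicit than the paper in checking the cardinality and disjointness of $(X,Y)$ required to certify it as a circuit; incidentally, your cardinality count $\#X = \#Y = d+1$ matches what the \emph{proof} of Lemma~\ref{lem-int-endpts} actually produces (and what the circuit description demands), whereas the \emph{statement} of that lemma as printed omits a ``$+1$''.
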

\begin{proof}
Suppose that there exist $I,J \in U(\mathcal{T})$ such that $I$ and $J$ are $2d$-interweaving. By Lemma~\ref{lem-int-endpts}, we have $X \subseteq \widehat{I}$ and $Y \subseteq \widehat{J}$ such that $X$ and $Y$ are $\delta$-interweaving. But this implies that $\widehat{I}$ and $\widehat{J}$ each contain one half of a circuit $(X, Y)$ for $C(n, 2d)$. This is a contradiction, since, by construction of $U(\mathcal{T})$, $|\widehat{I}|$ and $|\widehat{J}|$ are both simplices of the triangulation $\mathcal{T}$ of $C(n, 2d)$.
\end{proof}

We must now show that $\# U(\mathcal{T})=\binom{n-1}{2d+1}$. We use induction for this, showing that the size of $U(\mathcal{T})$ is preserved by increasing flips of $\mathcal{T}$, which requires the following lemma.

\begin{lemma}\label{lem-card-flip}
Let $|S|$ be a $(2d + 1)$-simplex inducing an increasing flip of a triangulation $\mathcal{T}$ of $C(n, 2d)$ and denote $S_{l} = \{s_{0}, s_{2}, \dots, s_{2d}\}$ and $S_{u} = \{s_{1}, s_{3}, \dots, s_{2d + 1}\}$. Then the following two sets have the same cardinality:
\begin{align*}
\mathcal{I}_{l}(S, n) &= \left\lbrace I \subseteq [n] \mid S_{l} \subseteq \widehat{I} \subset S \right\rbrace, \\ 
\mathcal{I}_{u}(S, n) &= \left\lbrace I \subseteq [n] \mid S_{u} \subseteq \widehat{I} \subset S \right\rbrace.
\end{align*}
\end{lemma}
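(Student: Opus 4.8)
The plan is to compute both cardinalities explicitly and read off that they agree. Write $S=\{s_{0}<s_{1}<\dots<s_{2d+1}\}$ and, with indices taken modulo $2d+2$, let $h_{i}$ be the number of elements of $[n]$ lying strictly between $s_{i}$ and $s_{i+1}$ in the cyclic order, so that $\sum_{i=0}^{2d+1}h_{i}=n-(2d+2)$. I claim
\[
\#\mathcal{I}_{l}(S,n)=\#\mathcal{I}_{u}(S,n)=\prod_{i=0}^{2d+1}(h_{i}+1)-\prod_{i=0}^{2d+1}h_{i}.
\]
The point is that although $\widehat{I}$ does not determine $I$ (the right endpoints of the constituent intervals of $I$ are unconstrained), the condition $S_{l}\subseteq\widehat{I}\subseteq S$ decouples $I$ into independent local choices.

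First I would record the trivial observation that for $I$ with $S_{l}\subseteq\widehat{I}\subseteq S$ one has $\widehat{I}=S_{l}\cup T$ with $T=\widehat{I}\cap S_{u}\subseteq S_{u}$, and $\widehat{I}\subsetneq S$ precisely when $T\neq S_{u}$. Then I would cut $[n]$ into the $d+1$ cyclic arcs $A_{j}:=[s_{2j},s_{2(j+1)})$ determined by $S_{l}$ (indices mod $2d+2$, so $A_{d}$ wraps through $s_{0}$); each $A_{j}$ contains exactly the two elements $s_{2j},s_{2j+1}$ of $S$. Since $s_{2j}\in S_{l}\subseteq\widehat{I}$, an interval of $I$ starts at $s_{2j}$, and since $s_{2(j+1)}\in S_{l}\subseteq\widehat{I}$ we have $s_{2(j+1)}-1\notin I$, so no interval of $I$ crosses an arc boundary; hence $I$ is recovered from, and freely determined by, the restrictions $I\cap A_{j}$. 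A short local analysis shows $I\cap A_{j}$ is either a single interval $[s_{2j},e]$ with $e$ ranging over $A_{j}\setminus\{s_{2(j+1)}-1\}$, which gives $h_{2j}+h_{2j+1}+1$ possibilities, or a pair of intervals $[s_{2j},e_{1}]\sqcup[s_{2j+1},e_{2}]$, which gives $h_{2j}h_{2j+1}$ possibilities; the second case occurs exactly when $s_{2j+1}\in\widehat{I}$. Summing over arcs gives $\#\{I\mid S_{l}\subseteq\widehat{I}\subseteq S\}=\prod_{j=0}^{d}\bigl((h_{2j}+h_{2j+1}+1)+h_{2j}h_{2j+1}\bigr)=\prod_{j=0}^{d}(h_{2j}+1)(h_{2j+1}+1)=\prod_{i=0}^{2d+1}(h_{i}+1)$, while those $I$ with $\widehat{I}=S$ are exactly the ones in which every arc is of the second type, so there are $\prod_{i=0}^{2d+1}h_{i}$ of them. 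Subtracting yields the displayed formula for $\#\mathcal{I}_{l}(S,n)$.

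For $\mathcal{I}_{u}(S,n)$ one runs the identical argument with $S_{u}$ in place of $S_{l}$, cutting $[n]$ into the arcs $B_{k}:=[s_{2k-1},s_{2k+1})$ determined by $S_{u}$; this again gives $\#\{I\mid S_{u}\subseteq\widehat{I}\subseteq S\}=\prod_{i}(h_{i}+1)$ and the same count $\prod_{i}h_{i}$ for the $I$ with $\widehat{I}=S$. In fact one can shortcut: the set $\{I\mid\widehat{I}=S\}$ makes no reference to $S_{l}$ or $S_{u}$, so it suffices to note that $\{I\mid S_{l}\subseteq\widehat{I}\subseteq S\}$ and $\{I\mid S_{u}\subseteq\widehat{I}\subseteq S\}$ have the same size $\prod_{i}(h_{i}+1)$ by the arc count, and remove this common part from each; hence $\#\mathcal{I}_{l}(S,n)=\#\mathcal{I}_{u}(S,n)$.

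The main obstacle, and really the only substantive step, is the local count per arc together with the verification that the arcs do not interact: that $s_{2(j+1)}-1\notin I$ (forcing $e\neq s_{2(j+1)}-1$ above) so that $I\mapsto(I\cap A_{j})_{j}$ is a bijection onto the product of the local choices. One must also be a little careful with the wrap-around arc and with degenerate arcs where $h_{i}=0$ (that is, $s_{i}$ and $s_{i+1}$ cyclically consecutive), in which case the product formula still returns the correct value, such an arc simply forbidding the two-interval option; the convention $[a,a-1]=\emptyset$ covers the remaining extreme cases. Everything else is routine bookkeeping.
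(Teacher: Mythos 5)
Your proof is correct and takes essentially the same approach as the paper: both reduce to showing $\#\mathcal{I}'_{l}(S,n)=\#\mathcal{I}'_{u}(S,n)$ after noting the common piece $\{I\mid\widehat{I}=S\}$, and both then count each set explicitly, arriving at $\prod_{i}(s_{i+1}-s_{i})=\prod_{i}(h_{i}+1)$. The only difference is the bookkeeping: the paper parametrises $I$ directly by the right endpoints $s'_{j}$ of pieces $[s_{j},s'_{j}]$, whereas you partition $[n]$ into the arcs between consecutive points of $S_{l}$ (or $S_{u}$) and classify the local trace of $I$ in each arc — the two are easily seen to be the same count organised in pairs, and your local-arc analysis is in fact slightly more transparent about why the constraints decouple.
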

Here we use the symbol `$\subset$' to denote proper subsets.
\begin{proof}
Note that we may instead consider
\begin{align*}
\mathcal{I}'_{l}(S,n)&:=\left\lbrace I \subseteq [n] \mid S_{l} \subseteq \widehat{I} \subseteq S\right\rbrace , \\
\mathcal{I}'_{u}(S,n)&:=\left\lbrace I \subseteq [n] \mid S_{u} \subseteq \widehat{I} \subseteq S\right\rbrace .
\end{align*}
This is because \[\mathcal{I}'_{l}(S,n)\setminus \mathcal{I}_{l}(S,n)=\mathcal{I}'_{u}(S,n)\setminus \mathcal{I}_{u}(S,n)=\left\lbrace I \subseteq [n] \mid \widehat{I} = S\right\rbrace .\] Hence if $\# \mathcal{I}'_{l}(S, n) = \# \mathcal{I}'_{u}(S, n)$, then $\# \mathcal{I}_{l}(S, n) = \# \mathcal{I}_{u}(S, n)$.

We prove the claim by explicit enumeration. Let \[I=[s_{0}, s'_{0}] \cup [s_{1}, s'_{1}] \cup [s_{2}, s'_{2}] \cup \dots \cup [s_{2d}, s'_{2d}] \cup [s_{2d + 1}, s'_{2d + 1}].\] Then $I \in \mathcal{I}'_{l}(S, n)$ if and only if, for all $i \in \mathbb{Z}/(d + 1)\mathbb{Z}$, \[s'_{2i} \in [s_{2i}, s_{2i + 1} - 1] \text{ and } s'_{2i + 1} \in [s_{2i + 1} - 1, s_{2i + 2} - 2].\] Recall that our convention here is that if $s'_{j} = s_{j} - 1$, then $[s_{j}, s'_{j}] = \emptyset$. Similarly, $I \in \mathcal{I}'_{u}(S, n)$ if and only if, for all $i \in \mathbb{Z}/(d + 1)\mathbb{Z}$, \[s'_{2i} \in [s_{2i} - 1, s_{2i + 1} - 2] \text{ and } s'_{2i + 1} \in [s_{2i + 1}, s_{2i + 2} - 1].\] Therefore, 
\begin{align*}
\# \mathcal{I}'_{l}(S, n) = \# \mathcal{I}'_{u}(S, n) &= \prod_{i \in \mathbb{Z}/(d + 1)\mathbb{Z}}(s_{2i + 1} - s_{2i})(s_{2i + 2} - s_{2i + 2})\\
&= \prod_{j \in \mathbb{Z}/(2d + 2)\mathbb{Z}}(s_{j + 1} - s_{j}).
\end{align*}
\end{proof}

This allows us to prove that our $2d$-separated collection $U(\mathcal{T})$ is the right size to be the internal spectrum of a cubillage.

\begin{lemma}\label{lem:right_size}
Given a triangulation $\mathcal{T}$ of $C(n,2d)$, we have that $\# U(\mathcal{T})=\binom{n-1}{2d+1}$. 
\end{lemma}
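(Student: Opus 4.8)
The plan is to proceed by induction on the triangulation $\mathcal{T}$ of $C(n,2d)$ along increasing flips, using the fact (from Section~\ref{sect-hst}) that the first higher Stasheff--Tamari order $\mathcal{S}(n,2d)$ is connected by flips, so that every triangulation is reachable from the lower triangulation $\mathcal{T}_l$ by a sequence of increasing flips.

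\emph{Base case.} First I would verify the claim for the lower triangulation $\mathcal{T}_l$ of $C(n,2d)$. Here I would use Proposition~\ref{prop-comb-interp} together with the fact that the lower triangulation is the image under $g$ of the lower cubillage $\mathcal{Q}_l$ of $Z(n,2d+1)$ — or, more directly, one can argue combinatorially: $U(\mathcal{T}_l)$ should coincide with the internal spectrum of the lower cubillage, which by the count recalled after \cite[Theorem 2.7]{gp} has exactly $\binom{n-1}{2d+1}$ elements. Concretely, I expect $\widehat{I}$ ranging over the simplices $|\widehat I|$ of $\mathcal{T}_l$ of dimension $\geqslant d$ together with the cyclic-interval ``fillings'' $I$ to enumerate precisely the internal vertices of $\mathcal{Q}_l$; this may require a short direct count using the explicit description of $\mathcal{T}_l$ via even subsets (Gale's criterion) and the description of internal points in $Z(n,2d+1)$ from \cite[(2.7)]{dkk} recalled in the excerpt.

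\emph{Inductive step.} Suppose $\mathcal{T}'$ is obtained from $\mathcal{T}$ by an increasing flip supported on a $(2d+1)$-simplex $|S|$, so that $\mathcal{T}$ contains the lower facets of $|S|$ and $\mathcal{T}'$ contains the upper facets, while $\mathcal{T}$ and $\mathcal{T}'$ agree away from the interior of $|S|$. I would show $\# U(\mathcal{T}') = \# U(\mathcal{T})$ by identifying the symmetric difference $U(\mathcal{T}) \,\triangle\, U(\mathcal{T}')$. The point is that $I \in [n]$ lies in exactly one of $U(\mathcal{T}), U(\mathcal{T}')$ precisely when $\widehat I$ is a simplex of exactly one of $\mathcal{T}, \mathcal{T}'$ of dimension $\geqslant d$; since the two triangulations differ only on simplices $|A|$ with $A \subseteq S$ that are not shared lower/upper facets, this forces $\widehat I \subset S$ with $S_l \subseteq \widehat I$ (for $I$ contributing to $U(\mathcal{T})$) or $S_u \subseteq \widehat I$ (for $I$ contributing to $U(\mathcal{T}')$) — here I need the standard fact that the ``new'' simplices of $\mathcal{T}'$ in the flip are exactly those $|A|$ with $S_u \subseteq A \subseteq S$ of dimension $\geqslant d$ and dually for $\mathcal{T}$, together with the observation that the facet $S_l$ and $S_u$ themselves, being codimension-one in $|S|$, may or may not already be internal simplices; this is precisely why Lemma~\ref{lem-card-flip} is phrased with the auxiliary sets $\mathcal{I}'_l, \mathcal{I}'_u$. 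Thus $U(\mathcal{T})\setminus U(\mathcal{T}')$ is in bijection with $\mathcal{I}_l(S,n)$ and $U(\mathcal{T}')\setminus U(\mathcal{T})$ with $\mathcal{I}_u(S,n)$, and Lemma~\ref{lem-card-flip} gives these the same cardinality. Hence $\# U(\mathcal{T}') = \# U(\mathcal{T})$, completing the induction.

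\emph{Main obstacle.} The delicate part is the bookkeeping in the inductive step: correctly pinning down which simplices of $\mathcal{T}$ and $\mathcal{T}'$ change under the flip (in particular the borderline status of the simplices on $S_l$ and $S_u$, and of lower-dimensional faces thereof), and matching these exactly against the sets $\mathcal{I}_l(S,n)$ and $\mathcal{I}_u(S,n)$ of Lemma~\ref{lem-card-flip} rather than some larger or smaller collection. The condition $d' \geqslant d$ in the definition of $U(\mathcal{T})$ means one only counts $\widehat I$ of sufficiently large dimension, and one must check that the flip does not alter which $\widehat I$ of dimension $< d$ (i.e.\ faces strictly below the relevant level) are ``present'', which it does not since such faces of $|S|$ are shared by $\mathcal{T}$ and $\mathcal{T}'$. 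Once this identification is set up cleanly, the equality of cardinalities is immediate from Lemma~\ref{lem-card-flip}, and the base case count is routine.
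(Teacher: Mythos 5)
Your proposal is correct and follows essentially the same route as the paper: induction on increasing flips starting from the lower triangulation, identifying $U(\mathcal{T})\setminus U(\mathcal{T}')=\mathcal{I}_{l}(S,n)$ and $U(\mathcal{T}')\setminus U(\mathcal{T})=\mathcal{I}_{u}(S,n)$ via Gale's criterion, and invoking Lemma~\ref{lem-card-flip}. The only tiny imprecision is that these sets are literally equal rather than merely in bijection with $\mathcal{I}_{l}(S,n)$ and $\mathcal{I}_{u}(S,n)$, and the constraint $\#\widehat{I}\geqslant d+1$ is automatic once $S_{l}\subseteq\widehat{I}$ or $S_{u}\subseteq\widehat{I}$ is imposed, which is why the borderline bookkeeping you flag resolves itself cleanly.
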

\begin{proof}
We prove the claim by induction on increasing flips of the triangulation. This is valid since every triangulation of a cyclic polytope can be reached via a sequence of increasing flips from the lower triangulation by \cite[Theorem 1.1(i)]{rambau}.

For the base case, let $\mathcal{T}_{l}$ be the lower triangulation of $C(n,2d)$. By Gale's Evenness Criterion, the $2d$-simplices of $\mathcal{T}_{l}$ are given by $1$ together with $d$ disjoint pairs of consecutive numbers. Therefore, the only $d'$-simplices of $\mathcal{T}_{l}$ with $d' \geqslant d$ which have no cyclically consecutive entries are the internal $d$-simplices. Hence if $I \in U(\mathcal{T}_{l})$, then $|\widehat{I}|$ is an internal $d$-simplex of $\mathcal{T}_{l}$. Moreover, the internal $d$-simplices of $\mathcal{T}_{l}$ are given by $(d+1)$-subsets which are cyclic $(d + 1)$-ple intervals and contain 1.

By \cite[(4.2)(ii)]{dkk}, the internal spectrum of the lower cubillage of $Z(n,2d+1)$ consists of all cyclic $(d + 1)$-ple intervals which contain 1. It is then straightforward to see that $\mathcal{U}(\mathcal{T})$ is indeed the internal spectrum of the lower cubillage of $Z(n, 2d + 1)$ when $\mathcal{T}$ is the lower triangulation of $C(n, 2d)$. Therefore, we have in this case that $\# U(\mathcal{T})=\binom{n-1}{2d+1}$.

For the inductive step, we suppose that we have a triangulation $\mathcal{T}'$ obtained by performing an increasing flip induced by a $(2d+1)$-simplex $|S|$ on a triangulation $\mathcal{T}$ for which the induction hypothesis holds. Then $\mathcal{I}_{l}(S, n)$ contains precisely the subsets $I$ such that $\pi_{n - 1, 2d + 1}|\widehat{I}|$ is contained in a lower facet of $\pi_{n - 1, 2d + 1}|S|$ but not any upper facets, by Gale's Evenness Criterion. Similarly, $\mathcal{I}_{u}(S, n)$ contains precisely the subsets $I$ such that $\pi_{n - 1, 2d + 1}|\widehat{I}|$ is contained in an upper facet of $\pi_{n - 1, 2d + 1}|S|$ but not any lower facets. Hence \[U(\mathcal{T}') = (U(\mathcal{T})\setminus\mathcal{I}_{l}(S, n)) \cup \mathcal{I}_{u}(S, n),\] and so $\# U(\mathcal{T}) = \# U(\mathcal{T}')$ by Lemma~\ref{lem-card-flip}. The result then follows by induction.
\end{proof}

Hence we obtain that $g$ is a surjection in even dimensions.

\begin{theorem}\label{thm-g-even}
The map $g \colon \mathcal{B}(n, \delta + 1) \to \mathcal{S}(n, \delta)$ is a surjection for even $\delta$. 
\end{theorem}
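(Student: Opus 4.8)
The plan is to bolt together the ingredients just assembled. Fix a triangulation $\mathcal{T}$ of $C(n, 2d)$; I want to produce a cubillage $\mathcal{Q}_{\mathcal{T}}$ of $Z(n, 2d + 1)$ with $g(\mathcal{Q}_{\mathcal{T}}) = \mathcal{T}$, and the candidate is the cubillage whose internal spectrum is the collection $U(\mathcal{T})$ defined above.

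The first step is to check that $U(\mathcal{T})$ really is the internal spectrum of a cubillage of $Z(n, 2d + 1)$. By Lemma~\ref{lem:cub_sep} it is $2d$-separated, and by Lemma~\ref{lem:right_size} it has cardinality $\binom{n - 1}{2d + 1}$. I would also observe that every $I \in U(\mathcal{T})$ is an internal subset: taking the canonical cyclic $l$-ple interval decomposition of $I$ we have $\# \widehat{I} = l$, and since $|\widehat{I}|$ is a $d'$-simplex of $\mathcal{T}$ with $d' \geqslant d$ this forces $l = d' + 1 \geqslant d + 1$, so $\xi_{I}$ is an internal point in $Z(n, 2d + 1)$ by the criterion of \cite[(2.7)]{dkk}. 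Hence $U(\mathcal{T})$ is disjoint from the collection $\mathcal{N}$ of subsets projecting to boundary vertices of $Z(n, 2d + 1)$; and as each member of $\mathcal{N}$ is $2d$-separated from every subset of $[n]$, the union $U(\mathcal{T}) \cup \mathcal{N}$ is $2d$-separated, of cardinality $\binom{n - 1}{2d + 1} + (\sum_{i = 0}^{2d + 1}\binom{n}{i} - \binom{n - 1}{2d + 1}) = \sum_{i = 0}^{2d + 1}\binom{n}{i}$, which is maximal. By \cite[Theorem 2.7]{gp} this collection is therefore the spectrum of a unique cubillage $\mathcal{Q}_{\mathcal{T}}$ of $Z(n, 2d + 1)$, and by construction $\intsp(\mathcal{Q}_{\mathcal{T}}) = U(\mathcal{T})$.

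The second step is to compute $g(\mathcal{Q}_{\mathcal{T}})$ using Proposition~\ref{prop-comb-interp}, which gives $\simp(g(\mathcal{Q}_{\mathcal{T}})) = \intsp(\mathcal{Q}_{\mathcal{T}}) \cap \binom{[n]}{d + 1} = U(\mathcal{T}) \cap \binom{[n]}{d + 1}$, so it remains to identify $U(\mathcal{T}) \cap \binom{[n]}{d + 1}$ with $\simp(\mathcal{T})$. If $I \in \binom{[n]}{d + 1}$ lies in $U(\mathcal{T})$, then $|\widehat{I}|$ is a $d'$-simplex of $\mathcal{T}$ with $d' \geqslant d$, and $\widehat{I} \subseteq I$ forces $d' + 1 = \# \widehat{I} \leqslant d + 1$, so $d' = d$ and $\widehat{I} = I$; thus $I$ is a cyclic $(d + 1)$-ple interval and $|I|$ is a $d$-simplex of $\mathcal{T}$, that is, an internal $d$-simplex, so $I \in \simp(\mathcal{T})$. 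Conversely, every $I \in \simp(\mathcal{T})$ is a cyclic $(d + 1)$-ple interval, hence $\widehat{I} = I$ and $|\widehat{I}|$ is a $d$-simplex of $\mathcal{T}$, placing $I \in U(\mathcal{T})$. Since a triangulation of $C(n, 2d)$ is determined by its internal $d$-simplices \cite{dey}, we conclude $g(\mathcal{Q}_{\mathcal{T}}) = \mathcal{T}$, and as $\mathcal{T}$ was arbitrary, $g$ is surjective for even $\delta$.

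The real obstacle in all of this is upstream, in Lemma~\ref{lem:right_size}: the count $\# U(\mathcal{T}) = \binom{n - 1}{2d + 1}$, carried out by induction along increasing flips and resting on the explicit enumeration of Lemma~\ref{lem-card-flip} together with Rambau's connectivity result \cite[Theorem 1.1(i)]{rambau}. Granting that, the theorem itself is just the two bookkeeping checks above — that $U(\mathcal{T})$ consists of internal subsets, and that its restriction to $(d + 1)$-subsets is precisely $\simp(\mathcal{T})$ — combined with the separated-collection/cubillage dictionary and Proposition~\ref{prop-comb-interp}.
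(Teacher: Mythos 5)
Your proof is correct and follows essentially the same approach as the paper's: take $U(\mathcal{T})$ as the candidate internal spectrum, invoke Lemma~\ref{lem:cub_sep} and Lemma~\ref{lem:right_size} together with the separated-collection/cubillage dictionary to get $\mathcal{Q}_{\mathcal{T}}$, then apply Proposition~\ref{prop-comb-interp}. You usefully make explicit two details the paper compresses — that every member of $U(\mathcal{T})$ is internal (so $U(\mathcal{T}) \cup \mathcal{N}$ really is a maximal $2d$-separated collection) and the two-sided verification that $U(\mathcal{T}) \cap \binom{[n]}{d+1} = \simp(\mathcal{T})$ — but the skeleton and key lemmas are the same.
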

\begin{proof}
Let $\delta=2d$ and let $\mathcal{T}$ be a triangulation of $C(n,2d)$. By Lemma~\ref{lem:cub_sep}, Lemma~\ref{lem:right_size}, and the correspondence between cubillages and separated collections from \cite{gp}, we have that the collection $U(\mathcal{T})$ is the internal spectrum of a cubillage $\mathcal{Q}_{\mathcal{T}}$ of $Z(n,2d+1)$. Moreover, $g(\mathcal{Q}_{\mathcal{T}})=\mathcal{T}$ by Proposition~\ref{prop-comb-interp}, since if $\# A=d+1$, then $A \in U(\mathcal{T})$ if and only if $|A|$ is an internal $d$-simplex of $\mathcal{T}$.
\end{proof}

\begin{example}\label{ex:even_surj}
We give an example of the construction used to prove Theorem~\ref{thm-g-even}. Consider the triangulation $\mathcal{T}$ of the hexagon $C(6,2)$ which has arcs $\simp(\mathcal{T})=\{13,15,35\}$.

Then we have
\begin{align*}
U(\mathcal{T})=\{ &13,15,35,\\
&134,125,356,135,\\
&1345,1235,1356\}.
\end{align*}
Note the presence of $135 \in U(\mathcal{T})$, since $|135|$ is a 2-simplex of $\mathcal{T}$. One can check that $U(\mathcal{T})$ is 2-separated. Furthermore, $\# U(\mathcal{T}) = 10 = \binom{5}{3} = \binom{6-1}{2+1}$, as desired.

We thus obtain the cubillage $\mathcal{Q}_{\mathcal{T}}$ which is defined by $\intsp(\mathcal{Q}_{\mathcal{T}})=U(\mathcal{T})$. It then follows from Proposition~\ref{prop-comb-interp} that $g(\mathcal{Q}_{\mathcal{T}})=\mathcal{T}$; compare Example~\ref{ex:g_comb}. Hence $\mathcal{T}$ has a pre-image under $g$.
\end{example}

We now use this result to show that the map $g$ must be a surjection for odd $\delta$. Following many authors, given a set $\mathcal{S}$ of subsets of $[n]$, we denote by $\mathcal{S} \ast (n+1)$ the set \[\mathcal{S} \ast (n+1) = \{ A \cup \{n+1\} \mid A \in \mathcal{S}\}.\] 

\begin{theorem}\label{thm-g-odd}
The map $g \colon \mathcal{B}(n, \delta + 1) \to \mathcal{S}(n, \delta)$ is a surjection for odd $\delta$.
\end{theorem}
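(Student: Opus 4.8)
The plan is to reduce the odd case to the even case, Theorem~\ref{thm-g-even}, by lifting a triangulation of $C(n,2d+1)$ (with $\delta=2d+1$) to one of $C(n+1,2d+2)$ by means of the operation $\ast(n+1)$, applying the even result there, and then descending to a cubillage of $Z(n,2d+2)$.

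Given a triangulation $\mathcal{T}$ of $C(n,2d+1)$, the first step is to produce a triangulation $\mathcal{T}^{+}$ of $C(n+1,2d+2)$ by coning $\mathcal{T}$ off the new vertex $p_{n+1}$. This makes sense because the vertex figure of $C(n+1,2d+2)$ at $p_{n+1}$ is combinatorially $C(n,2d+1)$, while the subcomplex of $\partial C(n+1,2d+2)$ made up of the faces not containing $n+1$ is already simplicial, its facets being $(2d+1)$-simplices by Gale's Evenness Criterion; one adjoins this canonical part to the cone. The feature I would need is that the internal simplices of $\mathcal{T}^{+}$ containing $n+1$ are exactly $\simp(\mathcal{T})\ast(n+1)$. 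Indeed, adjoining $n+1$ to a $(d+1)$-ple interval of $[n]$ that avoids both $1$ and $n$ yields a cyclic $(d+2)$-ple interval of $[n+1]$ in which $n+1$ forms an isolated block, and this is a bijection onto the cyclic $(d+2)$-ple intervals of $[n+1]$ containing $n+1$; together with Observation~\ref{obs:internal} and the interval descriptions of internal simplices, this identifies the internal $(d+1)$-simplices of $\mathcal{T}^{+}$ through $p_{n+1}$ with the internal $d$-simplices of $\mathcal{T}$. (When $2d+2=2$ the vertex figure is degenerate, but the coning of the arcs still makes sense and the identification is unchanged.)

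The second step is to apply Theorem~\ref{thm-g-even}, which is legitimate because $2d+2$ is even, to obtain a cubillage $\mathcal{Q}^{+}$ of $Z(n+1,2d+3)$ with $g(\mathcal{Q}^{+})=\mathcal{T}^{+}$; Proposition~\ref{prop-comb-interp} then gives $\intsp(\mathcal{Q}^{+})\cap\binom{[n+1]}{d+2}=\simp(\mathcal{T}^{+})$. To come back down, I would contract the colour $n+1$ in $\mathcal{Q}^{+}$ to get a cubillage $\mathcal{Q}^{+}/(n+1)$ of $Z(n,2d+3)$, and take the membrane $\mathcal{Q}$ given by the image of the $(n+1)$-pie $\Pi_{n+1}(\mathcal{Q}^{+})$; this is a cubillage of $Z(n,2d+2)$, so $\mathcal{Q}\in\mathcal{B}(n,2d+2)$. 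By Proposition~\ref{prop-comb-interp} it then remains to check $\intsp(\mathcal{Q})\cap\binom{[n]}{d+1}=\simp(\mathcal{T})$, and the crux is that passing to this membrane inverts $\ast(n+1)$ on internal spectra: for $A\in\binom{[n]}{d+1}$ with $|A|$ internal in $C(n,2d+1)$, one has $A\in\intsp(\mathcal{Q})$ if and only if $A\cup\{n+1\}\in\intsp(\mathcal{Q}^{+})$. Granting this, $\intsp(\mathcal{Q})\cap\binom{[n]}{d+1}$ is the set of such $A$ with $A\cup\{n+1\}\in\simp(\mathcal{T}^{+})$, which by the first step equals $\simp(\mathcal{T})$, finishing the proof.

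I expect the main obstacle to be this last claim, that the membrane of the $(n+1)$-contraction has exactly the stated internal spectrum. Establishing it requires a careful analysis --- using the machinery of $i$-pies, contractions, and membranes from \cite{dkk-interrelations} --- of which vertices of $\Pi_{n+1}(\mathcal{Q}^{+})$ survive in the membrane and of how the separated collections transform: each cube of $\Pi_{n+1}(\mathcal{Q}^{+})$ with generating set $I\ni n+1$ and initial vertex $\xi_{E}$ descends to a cube with generating set $I\setminus\{n+1\}$, and one must match the internal points of $Z(n,2d+2)$ arising this way with the internal points $\xi_{A\cup\{n+1\}}$ of $Z(n+1,2d+3)$. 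An alternative more in keeping with this section would be to bypass the membrane formalism and instead define $\mathcal{Q}$ directly via the collection $\{A\subseteq[n]\mid A\cup\{n+1\}\in\intsp(\mathcal{Q}^{+})\}$, verifying by hand --- with arguments in the style of Lemma~\ref{lem-int-endpts} and Lemma~\ref{lem-card-flip} --- that it is $(2d+1)$-separated and of size $\binom{n-1}{2d+2}$, hence the internal spectrum of a cubillage of $Z(n,2d+2)$.
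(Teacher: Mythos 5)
Your outline is the paper's argument: lift $\mathcal{T}$ to a triangulation of $C(n+1,2d+2)$ (the paper uses $\hat{\mathcal{T}}$ from \cite[Definition 4.1]{rambau}), apply Theorem~\ref{thm-g-even} to obtain a cubillage $\mathcal{Q}'$ of $Z(n+1,2d+3)$ over it, contract the colour $n+1$, and take the membrane $\mathcal{M}$ carved out in $\mathcal{Q}'/(n+1)$ by the image of the $(n+1)$-pie, which is a cubillage of $Z(n,2d+2)$. The one place where you make things harder than they need to be is the final verification. You set out to compute $\intsp(\mathcal{M})\cap\binom{[n]}{d+1}$ exactly via Proposition~\ref{prop-comb-interp}, and accordingly worry about a biconditional relating $\intsp(\mathcal{M})$ to $\intsp(\mathcal{Q}')$. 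The paper instead invokes Lemma~\ref{lem-all-simps}: it suffices to show $\mathrm{Sp}(\mathcal{M})\supseteq\Sigma(\mathcal{T})$, and this is cheap. For every $A\in\Sigma(\mathcal{T})$, both $A$ and $A\cup\{n+1\}$ lie in $\Sigma(\hat{\mathcal{T}})\subseteq\mathrm{Sp}(\mathcal{Q}')$, so $\mathcal{Q}'$ has an edge of colour $n+1$ from $\xi_{A}$ to $\xi_{A\cup\{n+1\}}$, and by \cite[Lemma 5.2]{dkk} that edge contracts to a vertex of $\mathcal{M}$; hence $A\in\mathrm{Sp}(\mathcal{M})$. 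The biconditional you flag as the main obstacle is in fact also true and provable from the same source with a little extra bookkeeping (any $(d+1)$-ple interval $A\subseteq\{2,\dots,n-1\}$ is a boundary vertex of $Z(n+1,2d+3)$, hence lies in $\mathrm{Sp}(\mathcal{Q}')$ automatically), but it is more work than the situation calls for.
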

\begin{proof}
Let $\delta=2d+1$. Let $\mathcal{T}$ be a triangulation of $C(n,2d+1)$. We show that there exists a cubillage $\mathcal{Q}_{\mathcal{T}}$ of $Z(n,2d+2)$ such that $\mathrm{Sp}(\mathcal{Q}_{\mathcal{T}}) \supseteq \Sigma(\mathcal{T})$. Consider the triangulation $\hat{\mathcal{T}}$ of $C(n+1,2d+2)$ defined in \cite[Definition 4.1]{rambau}. By Theorem~\ref{thm-g-even}, there is a cubillage $\mathcal{Q}'$ of $Z(n+1,2d+3)$ such that $g(\mathcal{Q}')=\hat{\mathcal{T}}$. By definition of $\hat{\mathcal{T}}$, we have that $\Sigma(\mathcal{T})\cup\Sigma(\mathcal{T})\ast(n+1) \subseteq \Sigma(\hat{\mathcal{T}}) \subseteq \mathrm{Sp}(\mathcal{Q}')$. By \cite[Lemma 5.2]{dkk}, if we take the $(n + 1)$-contraction of $\mathcal{Q}'$ then we get a membrane $\mathcal{M}$ in $\mathcal{Q}'/(n + 1)$ as the image of the $(n + 1)$-pie, and we have that $\mathrm{Sp}(\mathcal{M}) \supseteq \Sigma(\mathcal{T})$. We therefore define $\mathcal{Q}_{\mathcal{T}} = \mathcal{M}$, recalling that $\mathcal{M}$ is a cubillage of $Z(n, 2d + 2)$. By Lemma~\ref{lem-all-simps}, we must have that $g(\mathcal{Q}_{\mathcal{T}})=\mathcal{T}$.
\end{proof}

\begin{corollary}\label{cor:g_surj}
The map $g \colon \mathcal{B}(n, \delta + 1) \to \mathcal{S}(n, \delta)$ is a surjection.
\end{corollary}

\begin{remark}
In \cite[Theorem 4.10]{kv-poly}, Kapranov and Voevodsky gave a map $f\colon \mathcal{B}(n,\delta) \rightarrow \mathcal{S}(n+2,\delta+1)$ which they stated was a surjection. A proof of this statement remains unfound. It was shown in \cite[Proposition 7.1]{thomas-bst} that there is a factorisation \[
\begin{tikzcd}
\mathcal{B}(n,\delta) \ar[dr,"f"] \ar[rr,"\overline{g}"] && \mathcal{S}(n, \delta-1), \\
& \mathcal{S}(n+2,\delta+1) \ar[ru, dashed, two heads] &
\end{tikzcd}\] where $\overline{g}$ is the dual map to $g$ from Remark~\ref{rmk:g_dual} and the dotted map is a surjection by \cite[Corollary 4.3]{rambau}.

The map $f$ should not only be a surjection, but also a quotient map of posets, as we show  is true of the map $g$ in this paper. This was shown for $\delta = 1$ by Reading \cite{reading_cambrian}, drawing upon \cite{bw_shell_2}. However, note that $f$ cannot in general realise $\mathcal{S}(n + 2, \delta + 1)$ as a quotient of $\mathcal{B}(n, \delta)$ by an order congruence in the sense used in \cite{reading_cambrian}. This is because the equivalence classes of an order congruence must be intervals, but \cite[Section 6]{thomas-bst} shows that the fibres of the map $f$ are not always intervals. Hence $f$ can only be a quotient map of posets in a more general sense, such as that considered in this paper.
\end{remark}

\section{Fullness}\label{sect-quot}

We now show that the map $g$ is full, and hence is a quotient map of posets. To do this, we must show that if $\mathcal{T} \leqslant \mathcal{T}'$ for triangulations $\mathcal{T}, \mathcal{T}'$ of $C(n,\delta)$, then there are cubillages $\mathcal{Q}, \mathcal{Q}'$ of $Z(n,\delta+1)$ such that $g(\mathcal{Q})=\mathcal{T}$, $g(\mathcal{Q}')=\mathcal{T}'$, and $\mathcal{Q} \leqslant \mathcal{Q}'$. We follow the approach of Section~\ref{sect-surj}, whereby we work explicitly for even-dimensional triangulations, and then use this to show the result for odd dimensions. Indeed, we show that for triangulations $\mathcal{T}, \mathcal{T}'$ of $C(n, 2d)$ with $\mathcal{T} \leqslant \mathcal{T}'$, we have $\mathcal{Q}_{\mathcal{T}} \leqslant \mathcal{Q}_{\mathcal{T}'}$. For this, it suffices to show that if $\mathcal{T} \lessdot \mathcal{T}'$, then $\mathcal{Q}_{\mathcal{T}} < \mathcal{Q}_{\mathcal{T}'}$. To do this, we find a sequence of increasing flips from $\mathcal{Q}_{\mathcal{T}}$ to $\mathcal{Q}_{\mathcal{T}'}$.

We wish to continue working in the framework of separated collections, as in Section~\ref{sect-surj}. Hence, we must show what the covering relations of the higher Bruhat orders are in this framework.

\begin{theorem}\label{thm-comb-flips}
Given cubillages $\mathcal{Q}, \mathcal{Q}'$ of $Z(n, \delta + 1)$ we have that $\mathcal{Q} \lessdot \mathcal{Q}'$ if and only if $\mathrm{Sp}(\mathcal{Q}') = (\mathrm{Sp}(\mathcal{Q})\setminus \{A\})\cup \{B\}$, where $A$ $\delta$-interweaves $B$. Moreover, in this case $A$ tightly $\delta$-interweaves $B$.
\end{theorem}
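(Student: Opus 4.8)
The plan is to prove the two implications separately: the ``only if'' direction from the geometry of an increasing flip, and the ``if'' direction by a star--retiling argument, with the ``moreover'' coming out of the first direction.

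\textbf{The forward direction.} Suppose $\mathcal{Q}\lessdot\mathcal{Q}'$. By definition this is an increasing flip across some $(\delta+2)$-face $\Gamma$ of $Z(n,n)$ with initial vertex $\xi_{E}$ and generating set $C$, $\#C=\delta+2$. Then $\mathcal{Q}$ and $\mathcal{Q}'$ agree away from $\Gamma$, and inside $\Gamma$ they restrict, respectively, to the lower and the upper cubillage of the cyclic zonotope $\pi_{n,\delta+1}(\Gamma)$, a translate of $Z(\delta+2,\delta+1)$. This zonotope has corank one, so each of its two cubillages has exactly $\binom{\delta+1}{\delta+1}=1$ internal vertex, the two internal vertices are distinct, and every other vertex of $\Gamma$ lies in both; moreover an internal vertex of $\pi_{n,\delta+1}(\Gamma)$ occurs in no cube of $\mathcal{Q}$ outside $\Gamma$. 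Hence $\mathrm{Sp}(\mathcal{Q}')$ is obtained from $\mathrm{Sp}(\mathcal{Q})$ by deleting the internal vertex $A$ of the lower cubillage of $\pi_{n,\delta+1}(\Gamma)$ and inserting the internal vertex $B$ of the upper one. To identify $A$ and $B$ as subsets, one uses the classification of internal points of $Z(\delta+2,\delta+1)$ (the list preceding Observation~\ref{obs:internal}, and \cite{dkk}): the only subsets of $C$ that can be internal vertices of $\pi_{n,\delta+1}(\Gamma)$ are the two ``alternating'' sets $E\cup\{c_{j}\mid j\text{ even}\}$ and $E\cup\{c_{j}\mid j\text{ odd}\}$; appealing to \cite[(4.2)]{dkk} to see which is the internal vertex of the lower cubillage, one finds that $A$ is the former and $B$ the latter when $\delta$ is even, and the roles are reversed when $\delta$ is odd. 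In either case, since $c_{0}<c_{1}<\dots<c_{\delta+1}$, putting $i_{k}=c_{k}$ and comparing with the definition of tight $\delta$-interweaving shows at once that $A$ tightly $\delta$-interweaves $B$. This proves the forward implication and the ``moreover''.

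\textbf{The backward direction.} Suppose $\mathrm{Sp}(\mathcal{Q}')=(\mathrm{Sp}(\mathcal{Q})\setminus\{A\})\cup\{B\}$ with $A$ $\delta$-interweaving $B$. By \cite[(2.5)]{dkk} (as used in the proof of Lemma~\ref{lem-all-simps}), a cube belongs to a cubillage precisely when all of its vertices lie in the spectrum; hence the cubes of $\mathcal{Q}$ not in $\mathcal{Q}'$ are exactly those having $\xi_{A}$ as a vertex --- the star of $\xi_{A}$ in $\mathcal{Q}$ --- and dually the cubes of $\mathcal{Q}'$ not in $\mathcal{Q}$ form the star of $\xi_{B}$ in $\mathcal{Q}'$. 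These two stars tile one and the same region $P$, and $\xi_{A}$ is the unique interior vertex of $P$ in $\mathcal{Q}$, $\xi_{B}$ the unique interior vertex in $\mathcal{Q}'$. Since $P$, being the star of an interior vertex, is (the projection of) a zonotope on $m$ generators for some $m$, and a fine tiling of a $(\delta+1)$-dimensional zonotope on $m$ generators has $\binom{m-1}{\delta+1}$ interior vertices, the fact that $P$ has exactly one interior vertex forces $m=\delta+2$; thus $P=\pi_{n,\delta+1}(\Gamma)$ for a single $(\delta+2)$-face $\Gamma$ of $Z(n,n)$. Consequently $\mathcal{Q}$ and $\mathcal{Q}'$ differ by the flip across $\Gamma$ and are in particular comparable. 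If this flip were decreasing, then applying the forward direction to $\mathcal{Q}'\lessdot\mathcal{Q}$ would give that $B$ tightly $\delta$-interweaves $A$; but the two internal vertices of a corank-one cyclic zonotope form a perfectly interleaved pair, and for such a pair the relation ``$\delta$-interweaves'' holds in only one direction, contradicting the hypothesis that $A$ $\delta$-interweaves $B$. Hence the flip is increasing, $\mathcal{Q}\lessdot\mathcal{Q}'$, and the ``moreover'' again follows from the forward direction.

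\textbf{Main obstacle.} The delicate point is in the backward direction: one must justify that the star of the interior vertex $\xi_{A}$ --- which gets retiled into the star of $\xi_{B}$ over the same boundary --- is genuinely the projection of a single corank-one zonotope, equivalently that a one-element change of spectrum between two cubillages is realised by a single flip rather than only by a longer sequence of flips with cancelling effect; the interior-vertex count above needs the zonotopality of $P$ to be pinned down first, and then the direction of the resulting flip has to be fixed by the rigidity of perfect interleaving. A secondary bookkeeping point is the $\delta$ odd case of the identification of the lower cubillage's internal vertex via \cite[(4.2)]{dkk}, which can if necessary be obtained from the $(n+1)$-contraction reduction used in Section~\ref{sect-surj}.
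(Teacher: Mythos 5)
Your forward direction is essentially sound, though it takes a different route from the paper: you count interior vertices of the corank-one zonotope $\pi_{n,\delta+1}(\Gamma)$ (using $\binom{m-1}{\delta+1}$ with $m=\delta+2$) and identify the two alternating subsets of the generating set as the unique interior vertices of the lower and upper cubillages, while the paper simply cites \cite[Proposition 8.1]{dkk-survey}, which provides exactly the description of which vertices of $\Gamma$ lie only on lower facets, only on upper facets, or on both. Both yield the tight interweaving. Your identification of the internal vertices via \cite[(4.2)]{dkk} is a little loose since that statement concerns the full zonotope $Z(n,2d+1)$ rather than a translated subface, but the underlying classification is correct.

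The backward direction contains the genuine gap that you yourself flag. Your argument requires that the region $P$ — the union of cubes of $\mathcal{Q}$ sharing the vertex $\xi_A$ — be the projection of a single $(\delta+2)$-face $\Gamma$ of $Z(n,n)$, and you try to get this from the interior-vertex count $\binom{m-1}{\delta+1}$. But that count presupposes that $P$ is (the projection of) a zonotope on some set of the standard generators, which is exactly what needs to be proved; the star of a vertex in a cubillage is not a zonotope in general, and even if it were, one would still have to show its generating vectors are among $\xi_1,\dots,\xi_n$ so that $P$ actually comes from a face of $Z(n,n)$. The paper's proof fills exactly this hole by working locally at $\xi_A$: since $\xi_A$ is internal it has at least $\delta+2$ edges, each neighbouring vertex must remain $\delta$-separated from $B$, which forces every edge to either delete an element of the witness $A'\subseteq A\setminus B$ or insert an element of $B'\subseteq B\setminus A$; from $\#(A'\cup B')=\delta+2$ one then deduces both that there are exactly $\delta+2$ edge directions and that $A'=A\setminus B$, $B'=B\setminus A$ (tightness). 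This pins down $P$ as the set of facets through $\xi_A$ of the $(\delta+2)$-face generated by $A'\cup B'$ with initial vertex $\xi_{A\cap B}$, after which \cite[Proposition 8.1]{dkk-survey} gives that these are the lower facets. Your closing argument that a decreasing flip would contradict the direction of interweaving is fine and parallel in spirit, but it only becomes available once $\Gamma$ has been produced; to repair your proof you would essentially need to import the paper's edge analysis, at which point the interior-vertex-count detour becomes unnecessary.
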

\begin{proof}
The forwards direction follows from \cite[Proposition 8.1]{dkk-survey}. Namely, if the increasing flip from $\mathcal{Q}$ to $\mathcal{Q}'$ is induced by the face $\Gamma$ of $Z(n, n)$, then $\Gamma$ has a vertex $\xi_{A}$ and a vertex $\xi_{B}$ such that $A$ tightly $\delta$-interweaves $B$, $\pi_{n, \delta + 2}(\xi_{A})$ is only contained in the lower facets of $\pi_{n, \delta + 2}(\Gamma)$, $\pi_{n, \delta + 2}(\xi_{B})$ is only contained in the upper facets of $\pi_{n, \delta + 2}(\Gamma)$, and every other vertex of $\pi_{n, \delta + 2}(\Gamma)$ is contained in at least one lower facet and at least one upper facet. Hence, $\mathrm{Sp}(\mathcal{Q}') = (\mathrm{Sp}(\mathcal{Q})\setminus \{A\})\cup \{B\}$, where $A$ tightly $\delta$-interweaves $B$.

We now prove the backwards direction, supposing that $\mathrm{Sp}(\mathcal{Q}') = (\mathrm{Sp}(\mathcal{Q})\setminus \{A\})\cup \{B\}$, where $A$ $\delta$-interweaves $B$. Let $A' \subseteq A \setminus B$ and $B' \subseteq B\setminus A$ witness the fact that $A$ $\delta$-interweaves $B$.

We consider first the case where $\delta=2d$. We begin by proving that $A' = A \setminus B$ and $B' = B \setminus A$, so that $A$ tightly $2d$-interweaves $B$. The vertex $\xi_{A}$ must be an internal vertex in the cubillage $\mathcal{Q}$, since subsets corresponding to boundary vertices are contained in every $2d$-separated collection. Therefore, $\xi_{A}$ must be a vertex of at least two cubes in $\mathcal{Q}$, and so must have at least $2d+2$ edges emanating from it. The subsets at the other end of each of these edges must be $2d$-separated from $B$, so the edges must either add elements of $B'$ or remove elements of $A'$. Since $\# A' \cup B' = 2d + 2$, the edges emanating from $\xi_{A}$ in $\mathcal{Q}$ must be precisely the edges which remove elements of $A'$ and add elements of $B'$. Now suppose that there exists $a \in A \setminus (A' \cup B)$. Then $a \in (b'_{i-1}, b'_{i})$ for some $i \in \mathbb{Z}/(d + 1)\mathbb{Z}$. But this implies that $A \setminus \{a'_{i}\}$ $\delta$-interweaves $B$, which contradicts the fact that the edge from $\xi_{A}$ to $\xi_{A \setminus \{a'_{i}\}}$ is in the cubillage $\mathcal{Q}$. Hence $A' = A \setminus B$. The argument that $B' = B \setminus A$ is similar.

Therefore $\xi_{A}$ is incident to $2d+2$ edges in the cubillage, where $d+1$ of the edges add elements of $B'$ and $d+1$ of the edges remove elements of $A'$. The cubes with $\xi_{A}$ as a vertex are generated by a choice of $2d+1$ of these edges. If $\mathcal{P}$ is the union of cubes in $\mathcal{Q}$ with $\xi_{A}$ as a vertex, then $\mathcal{P}$ is a set of facets of a $(2d+2)$-face $\Gamma$ of $Z(n, n)$ which has initial vertex $\xi_{A \cap B}$ and which is generated by $A' \cup B'$. By \cite[Proposition 8.1]{dkk-survey}, $\pi_{n, \delta + 2}(\mathcal{P})$ gives the lower facets of $\pi_{n, \delta + 2}(\Gamma)$, since $\pi_{n, \delta + 2}(\mathcal{P})$ consists of all the facets of $\pi_{n, \delta + 2}(\Gamma)$ which contain $\pi_{n, \delta + 2}(\xi_{A})$. Since, likewise, the upper facets of $\pi_{n, \delta + 2}(\Gamma)$ are precisely those containing $\pi_{n, \delta + 2}(\xi_{B})$, we obtain that $\mathcal{Q}'$ is an increasing flip of $\mathcal{Q}$.

For $\delta=2d+1$, the argument is similar. We deduce that $\xi_{A}$ has $2d + 3$ edges emanating from it in $\mathcal{Q}$, $d + 1$ of which remove elements of $A'$ and $d + 2$ of which add elements of $B'$. To show that $A' = A \setminus B$ and $B' = B \setminus A$, the only extra thing to consider is the possibility that we have $a \in A \setminus (A' \cup B)$ such that $a < b'_{0}$ or $a > b'_{d+1}$. But in the first instance here, we have that $B$ $\delta$-interweaves $A \cup \{b'_{d+1}\}$, since \[a < b'_{0} < a'_{0} < b'_{1} < \dots < b'_{d} < a'_{d}.\] But this is a contradiction, since we know that the edge from $\xi_{A}$ to $\xi_{A \cup \{b'_{d+1}\}}$ is in $\mathcal{Q}$. In the second instance, we have that $B$ $\delta$-interweaves $A \cup \{b'_{0}\}$, when we know that the edge from $\xi_{A}$ to $\xi_{A \cup \{b'_{0}\}}$ is in $\mathcal{Q}$. The remainder of the case where $\delta = 2d - 1$ is similar.
\end{proof}

In the setting of the above theorem, we say that $(A, B)$ is the \emph{exchange pair} of the flip and that we \emph{exchange} $A$ for $B$. Using this characterisation of increasing flips, it can be seen that, in order to show that $\mathcal{Q}_{\mathcal{T}} \leqslant \mathcal{Q}_{\mathcal{T}'}$, we must show that we can gradually exchange the elements of $\mathrm{Sp}(\mathcal{Q}_{\mathcal{T}})\setminus\mathrm{Sp}(\mathcal{Q}_{\mathcal{T}'})$ for the elements of $\mathrm{Sp}(\mathcal{Q}_{\mathcal{T}'}) \setminus \mathrm{Sp}(\mathcal{Q}_{\mathcal{T}})$. If $|S|$ is the simplex inducing the increasing flip from $\mathcal{T}$ to $\mathcal{T}'$, then $\mathrm{Sp}(\mathcal{Q}_{\mathcal{T}})\setminus\mathrm{Sp}(\mathcal{Q}_{\mathcal{T}'}) = \mathcal{I}_{l}(S, n)$ and $\mathrm{Sp}(\mathcal{Q}_{\mathcal{T}'}) \setminus \mathrm{Sp}(\mathcal{Q}_{\mathcal{T}}) = \mathcal{I}_{u}(S, n)$, as in Lemma~\ref{lem:right_size}. Hence, we will define a sequence of exchanges which replaces $\mathcal{I}_{l}(S, n)$ with $\mathcal{I}_{u}(S, n)$. To show that our sequence of exchanges works, we will need the following lemma.

\begin{lemma}\label{lem:interweaving_criterion_for_flips}
Let \[I = [s_{0}, s^{i}_{0}] \cup [s_{1}, s^{i}_{1}] \cup \dots \cup [s_{2d}, s^{i}_{2d}] \cup [s_{2d+1}, s^{i}_{2d+1}]\] and \[J = [s_{0}, s^{j}_{0}] \cup [s_{1}, s^{j}_{1}] \cup \dots \cup [s_{2d}, s^{j}_{2d}] \cup [s_{2d+1}, s^{j}_{2d+1}].\] Then $I$ $2d$-interweaves $J$ if and only if, for all $r$, \[s^{j}_{2r} < s^{i}_{2r} \text{ and } s^{i}_{2r+1} < s^{j}_{2r+1}.\]
\end{lemma}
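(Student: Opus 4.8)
The plan is to compare $I$ and $J$ block by block along $S$ and then read off the interweaving condition, with all comparisons of the right endpoints $s^i_k, s^j_k$ understood in the cyclic sense fixed by the conventions at the start of the section and all index arithmetic taken modulo $2d+2$. First I would record the following local description of the symmetric difference. Since $I$ and $J$ share the left endpoints $s_0, \dots, s_{2d+1}$, the set $[n]$ is partitioned into the blocks $[s_k, s_{k+1})$, and within the $k$-th block $I$ restricts to $[s_k, s^i_k]$ and $J$ to $[s_k, s^j_k]$. Consequently the $k$-th block contributes a nonempty interval to $I \setminus J$ and nothing to $J \setminus I$ precisely when $s^j_k < s^i_k$; it contributes only to $J \setminus I$ precisely when $s^i_k < s^j_k$; and to neither when $s^i_k = s^j_k$. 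Thus $I \setminus J$ lies in the union of the blocks with $s^j_k < s^i_k$ --- call these the \emph{$I$-blocks} --- and $J \setminus I$ in the union of the \emph{$J$-blocks}, and no block is of both kinds.

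For sufficiency, suppose $s^j_{2r} < s^i_{2r}$ and $s^i_{2r+1} < s^j_{2r+1}$ for all $r$, so that every even block is an $I$-block and every odd block a $J$-block. Then by the local description $s^i_{2r} \in I \setminus J$ and $s^j_{2r+1} \in J \setminus I$, and I would check that the sequence
\[
s^i_0,\ s^j_1,\ s^i_2,\ s^j_3,\ \dots,\ s^i_{2d},\ s^j_{2d+1}
\]
is increasing in the linear order on $[n]$, since consecutive terms lie in consecutive blocks and a block $[s_k, s^*_k]$ with $k \leqslant 2d$ is contained in $[s_k, s_{k+1} - 1]$. Taking these as $i_0, \dots, i_{2d+1}$ then exhibits $I$ as $2d$-interweaving $J$.

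For necessity, let $\{i_0, i_2, \dots, i_{2d}\} \subseteq I \setminus J$ and $\{i_1, i_3, \dots, i_{2d+1}\} \subseteq J \setminus I$ witness that $I$ $2d$-interweaves $J$, so $i_0 < i_1 < \dots < i_{2d+1}$. By the local description each $i_{2r}$ sits in an $I$-block and each $i_{2r+1}$ in a $J$-block, so consecutive terms of the chain lie in distinct blocks; and since a block is a single cyclic interval, it cannot contain two terms of the chain with a term from another block strictly between them. Hence the $2d+2$ terms of the chain occupy $2d+2$ distinct blocks --- one per block --- and as the chain is increasing and the blocks occur in cyclic order, it meets the blocks in the order $0, 1, \dots, 2d+1$. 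So $i_{2r}$ lies in block $2r$, forcing that block to be an $I$-block, i.e.\ $s^j_{2r} < s^i_{2r}$; and $i_{2r+1}$ lies in block $2r+1$, a $J$-block, i.e.\ $s^i_{2r+1} < s^j_{2r+1}$, which is the claimed condition.

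I expect the main obstacle to be the necessity direction, and within it the argument that an arbitrary witnessing chain must pass through all blocks of $S$, exactly once each and in cyclic order. The delicate point is the one block of $S$ that can straddle the wrap-around of $[n]$: for such a block the ``cyclic interval between two elements'' and the ``linear interval between two elements'' disagree, which must be reconciled with the linear order appearing in the definition of interweaving and with the precise meaning of the inequalities $s^i_k < s^j_k$. This is exactly what the modular conventions fixed at the start of the section are for, but it is where the real care is needed; the complementary check in the sufficiency direction, that the exhibited sequence is genuinely linearly increasing, needs the same bookkeeping.
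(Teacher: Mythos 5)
Your block-by-block reading is the same argument as the paper's own proof: sufficiency by exhibiting the chain of alternating right endpoints, necessity by showing that the $2d+2$ terms of a witnessing chain occupy $2d+2$ distinct blocks. You have also located exactly the right delicate point, the wrap-around block, but what you defer there is not just bookkeeping: the assertion that the chain ``meets the blocks in the order $0, 1, \dots, 2d+1$'' is genuinely false, and with it the ``only if'' direction of the lemma as stated. A witnessing chain can begin in the wrap-around block, whose elements in $\{1, \dots, s_0 - 1\}$ come \emph{first} in the linear order on $[n]$; then the chain visits the blocks in the rotated order $2d+1, 0, 1, \dots, 2d$, the parities shift, and the roles of $I$-blocks and $J$-blocks are exchanged. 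Concretely, take $n = 8$, $S = \{2,4,6,8\}$, $d = 1$, $I = [2,2] \cup [4,5] \cup [6,6] \cup [8,1] = \{1,2,4,5,6,8\}$ and $J = [2,3] \cup [4,4] \cup [6,7] \cup [8,7] = \{2,3,4,6,7\}$. Then $1 < 3 < 5 < 7$ witnesses that $I$ $2$-interweaves $J$, but the chain visits the blocks in the order $3,0,1,2$, and (reading $<$ cyclically via $\mathbf{n}^i = (1,2,1,2)$, $\mathbf{n}^j = (2,1,2,0)$ as in Construction~\ref{constr:bij}) the inequalities which actually hold are the opposite ones, $s^i_{2r} < s^j_{2r}$ and $s^j_{2r+1} < s^i_{2r+1}$. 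The paper's own proof glosses over the identical point at the step ``by the pigeonhole principle and the fact that $x_0 < y_0$, we deduce that $x_r \in [s_{2r}, s^i_{2r}]$''.

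What your argument (and the paper's) does establish is the symmetric version: $I$ and $J$ are $2d$-\emph{interweaving} if and only if every block is an $I$-block or a $J$-block and these alternate, i.e.\ exactly one of the two opposite systems of inequalities holds. That disjunctive form is precisely what is used when Lemma~\ref{lem:interweaving_criterion_for_flips} is invoked in the proof of Theorem~\ref{thm-even-quot}, so the larger argument is unaffected. The sufficiency direction carries a matching caveat, which you also flagged: the chain $s^i_0 < s^j_1 < \dots < s^j_{2d+1}$ you exhibit need not be increasing when $s^j_{2d+1}$ lies in $\{1, \dots, s_0 - 1\}$, and in that case it furnishes a witness that $J$ $2d$-interweaves $I$ instead. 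The clean fix is to restate the lemma for the symmetric relation ``$2d$-interweaving'' with the disjunctive conclusion; your block analysis, like the paper's, then closes the argument.
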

\begin{proof}
If we have that, for all $r$, $s^{j}_{2r} < s^{i}_{2r}$ and $s^{i}_{2r+1} < s^{j}_{2r+1}$, then we have that $\{s^{i}_{0}, s^{i}_{2}, \dots, s^{i}_{2d}\} \subseteq I \setminus J$ and $\{s_{1}^{j}, s_{3}^{j}, \dots, s_{2d+1}^{j}\} \subseteq J \setminus I$ with \[s_{0}^{i} < s_{1}^{j} < s_{2}^{i} < s_{3}^{j} < \dots < s_{2d}^{i} < s_{2d+1}^{j}.\] Hence $I$ $2d$-interweaves $J$.

Conversely, suppose that $I$ $2d$-interweaves $J$, and let $X \subseteq I \setminus J$ and $Y \subseteq J \setminus I$ witness this. We cannot have both $x_{p}, x_{q} \in [s_{t}, s^{i}_{t}]$ for $p \neq q$, since this implies that $y_{r} \in [s_{t}, s^{i}_{t}]$ for $p \leqslant r < q$. Furthermore, we cannot have both $x_{p} \in [s_{t}, s_{t}^{i}]$ and $y_{p} \in [s_{t}, s_{t}^{j}]$, since we must have either $[s_{t}, s_{t}^{i}] \subseteq [s_{t}, s_{t}^{j}]$ or $[s_{t}, s_{t}^{j}] \subseteq [s_{t}, s_{t}^{i}]$. By the pigeonhole principle and the fact that $x_{0} < y_{0}$, we deduce that $x_{r} \in [s_{2r}, s_{2r}^{i}]$ and $y_{r} \in [s_{2r + 1}, s_{2r + 1}^{j}]$ for all $r$. But this implies that $s^{j}_{2r} < s^{i}_{2r}$ and $s^{i}_{2r + 1} < s^{j}_{2r + 1}$ for all $r$.
\end{proof}

It is now useful for us to obtain an explicit map for the bijection from Lemma~\ref{lem-card-flip}. This allows us to construct the sequence of exchanges which replaces $\mathcal{I}_{l}(S, n)$ with $\mathcal{I}_{u}(S, n)$.

\begin{construction}\label{constr:bij}
Given $S \in \binom{[n]}{2d + 2}$, we define
\begin{align*}
\mathcal{I}(S, n) &= \mathcal{I}_{l}(S, n) \cup \mathcal{I}_{u}(S, n), \\
\mathcal{I}'(S, n) &= \mathcal{I}'_{l}(S, n) \cup \mathcal{I}'_{u}(S, n).
\end{align*}
In order to get a convenient parametrisation of these sets, we define a map
\begin{align*}
\phi \colon \prod_{i \in \mathbb{Z}/(2d + 2)\mathbb{Z}} [0, s_{i + 1} - s_{i}] &\rightarrow 2^{[n]} \\
(n_{0}, n_{1}, \dots, n_{2d + 1}) &\mapsto \bigcup_{i \in \mathbb{Z}/(2d + 2)\mathbb{Z}} [s_{i}, s_{i} + n_{i} - 1].
\end{align*}
We abbreviate $\mathbf{n} = (n_{0}, n_{1}, \dots, n_{2d + 1})$. Then
\begin{itemize}
\item $\phi(\mathbf{n}) \in \mathcal{I}'_{l}(S,n)$ if and only if $n_{2i - 1} < s_{2i} - s_{2i - 1}$ and $n_{2i} > 0$ for all $i \in \mathbb{Z}/(d+1)\mathbb{Z}$;
\item $\phi(\mathbf{n}) \in \mathcal{I}'_{u}(S,n)$ if and only if $n_{2i} < s_{2i+1} - s_{2i}$ and $n_{2i+1} > 0$ for all $i \in \mathbb{Z}/(d+1)\mathbb{Z}$;
\item $\phi(\mathbf{n}) \in \mathcal{I}_{l}(S,n)$ if and only if $n_{2i-1} < s_{2i} - s_{2i-1}$ and $n_{2i} > 0$ for all $i \in \mathbb{Z}/(d+1)\mathbb{Z}$, and there exists a $j \in \mathbb{Z}/(d+1)\mathbb{Z}$ such that either $n_{2j+1} = 0$ or $n_{2j} = s_{2j+1} - s_{2j}$;
\item $\phi(\mathbf{n}) \in \mathcal{I}_{u}(S,n)$ if and only if $n_{2i} < s_{2i+1} - s_{2i}$ and $n_{2i+1} > 0$ for all $i \in \mathbb{Z}/(d+1)\mathbb{Z}$, and there exists a $j \in \mathbb{Z}/(d+1)\mathbb{Z}$ such that either $n_{2j} = 0$, or $n_{2j-1} = s_{2j} - s_{2j-1}$.
\end{itemize}
We then obtain an explicit bijection by defining a map \[\psi \colon \mathcal{I}_{l}(S,n) \rightarrow \mathcal{I}_{u}(S,n)\] as follows. Let $I \in \mathcal{I}_{l}(S,n)$ such that $I = \phi(\mathbf{n})$ and let $\mathbf{t} = (-1, 1, -1, 1, \dots, -1, 1)$. Further, define \[\lambda_{I} = \max\left\lbrace\lambda \in \mathbb{Z}_{>0} \mathrel{\Big|} \mathbf{n} + \lambda\mathbf{t} \in \prod_{i \in \mathbb{Z}/(2d + 2)\mathbb{Z}} [0, s_{i + 1} - s_{i}]\right\rbrace.\] By construction, \[\phi(\mathbf{n} + \lambda_{I}\mathbf{t}) \in \mathcal{I}_{u}(S, n),\] since we must either have some $j \in \mathbb{Z}/(d+1)\mathbb{Z}$ such that $s_{2j} - \lambda_{I} = 0$, or some $j \in \mathbb{Z}/(d+1)\mathbb{Z}$ such that $s_{2j-1} + \lambda_{I} = s_{2j} - s_{2j-1}$, otherwise $\lambda_{I}$ would not be maximal. Therefore define \[\psi(I) = \phi(\mathbf{n} + \lambda_{I}\mathbf{t}).\]

It can be seen that the map $\psi$ is a bijection because one may define its inverse as follows. Let $J \in \mathcal{I}_{u}(S,n)$ such that $J = \phi(\mathbf{n})$. Then let \[\mu_{J} = \max\left\lbrace\mu \in \mathbb{Z}_{>0} \mathrel{\Big|} \mathbf{n} - \mu\mathbf{t} \in \prod_{i \in \mathbb{Z}/(2d + 2)\mathbb{Z}} [0, s_{i + 1} - s_{i}]\right\rbrace.\] By construction, \[\phi(\mathbf{n} - \mu_{J}\mathbf{t}) \in \mathcal{I}_{l}(S, n),\] since we must either have some $j \in \mathbb{Z}/(d+1)\mathbb{Z}$ such that $n_{2j+1} - \mu_{J} = 0$, or some $j \in \mathbb{Z}/(d+1)\mathbb{Z}$ such that $n_{2j} + \mu_{J} = s_{2j+1} - s_{2j}$. It is then clear that \[\psi^{-1}(J) = \phi(\mathbf{n} - \mu_{J}\mathbf{t}).\]
\end{construction}

\begin{theorem}\label{thm-even-quot}
Given triangulations $\mathcal{T}, \mathcal{T}'$ of $C(n,2d)$ such that $\mathcal{T} \lessdot \mathcal{T}'$, there exist cubillages $\mathcal{Q}_{0}, \dots, \mathcal{Q}_{r}$ of $Z(n,2d+1)$ such that $\mathcal{Q}_{0} = \mathcal{Q}_{\mathcal{T}}$, $\mathcal{Q}_{r} = \mathcal{Q}_{\mathcal{T}'}$ and \[\mathcal{Q}_{0} \lessdot \mathcal{Q}_{1} \lessdot \dots \lessdot \mathcal{Q}_{r},\] so that $\mathcal{Q}_{\mathcal{T}} \leqslant \mathcal{Q}_{\mathcal{T}'}$.
\end{theorem}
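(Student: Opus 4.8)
\emph{Setup.} It suffices to treat a single covering relation $\mathcal{T}\lessdot\mathcal{T}'$ in $\mathcal{S}(n,2d)$, and for this to produce an explicit chain of increasing flips from $\mathcal{Q}_{\mathcal{T}}$ to $\mathcal{Q}_{\mathcal{T}'}$. Let $|S|$ be the $(2d+1)$-simplex inducing the flip $\mathcal{T}\lessdot\mathcal{T}'$. By the computation in the proof of Lemma~\ref{lem:right_size} we have $\intsp(\mathcal{Q}_{\mathcal{T}'})=(U(\mathcal{T})\setminus\mathcal{I}_{l}(S,n))\cup\mathcal{I}_{u}(S,n)$, and by Theorem~\ref{thm-comb-flips} an increasing flip of cubillages is precisely an exchange of a $2d$-interweaving pair $(A,B)$ in the spectrum. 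So the task is to interpolate between the collections $U(\mathcal{T})$ and $(U(\mathcal{T})\setminus\mathcal{I}_{l}(S,n))\cup\mathcal{I}_{u}(S,n)$ by a sequence of such single exchanges, each of which keeps us at the internal spectrum of a cubillage.

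\emph{The interpolation.} The plan is to slide the shape-vectors of the sets in $\mathcal{I}_{l}(S,n)$ toward their partners in $\mathcal{I}_{u}(S,n)$ in the direction $\mathbf{t}=(-1,1,\dots,-1,1)$, using the parametrisation $\phi$ and the bijection $\psi\colon\mathcal{I}_{l}(S,n)\to\mathcal{I}_{u}(S,n)$ of Construction~\ref{constr:bij}. A typical step of the sequence replaces some set $\phi(\mathbf{p})$ currently present in the internal spectrum by $\phi(\mathbf{p}+\mathbf{t})$ (or by $\phi(\mathbf{p}+k\mathbf{t})$ for a suitable $k\geqslant1$), possibly passing through intermediate sets whose shape-vector lies in the interior of the box $\prod_{i}[0,s_{i+1}-s_{i}]$, i.e.\ in neither $\mathcal{I}_{l}(S,n)$ nor $\mathcal{I}_{u}(S,n)$. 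By Lemma~\ref{lem:interweaving_criterion_for_flips}, $\phi(\mathbf{p})$ always $2d$-interweaves $\phi(\mathbf{p}+k\mathbf{t})$ for $k\geqslant1$, so by Theorem~\ref{thm-comb-flips} each such step is an \emph{increasing} flip, provided the new collection is again $2d$-separated; the cardinality is automatically correct because the whole procedure is governed by the bijection $\psi$, under which $\lambda_{I}$ measures how far $I$ must travel. Since every triangulation of $C(n,2d)$ is reachable by increasing flips from the lower one, this together with transitivity gives $\mathcal{Q}_{\mathcal{T}}\leqslant\mathcal{Q}_{\mathcal{T}'}$ for $\mathcal{T}\leqslant\mathcal{T}'$.

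\emph{The main obstacle.} The real content is keeping every intermediate collection $2d$-separated. Pairs both lying in $U(\mathcal{T})$, and pairs both lying in $U(\mathcal{T}')$ (in particular in $\mathcal{I}_{u}(S,n)$), cause no difficulty by Lemma~\ref{lem:cub_sep}, so the only danger is a set that has already been slid forward interweaving one still lying in $\mathcal{I}_{l}(S,n)$. One direction of this is harmless: a set of the form $\phi(\mathbf{p}+k\mathbf{t})$ never $2d$-interweaves a set of $\mathcal{I}_{l}(S,n)$, which one checks from Lemma~\ref{lem:interweaving_criterion_for_flips} together with the strict positivity of the $\lambda$'s. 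However the \emph{reverse} interweaving can genuinely occur, and in fact it can occur symmetrically for two sets of $\mathcal{I}_{l}(S,n)$; consequently the exchanges cannot be carried out ``one whole set at a time'' in any fixed order, and the small increments must be interleaved. I expect the crux of the proof to be the combinatorial bookkeeping --- phrased in terms of $\phi$, $\psi$ and $\lambda_{I}$ --- that produces a global interleaving of these increments avoiding all such collisions, so that the entire sequence $\mathcal{Q}_{\mathcal{T}}=\mathcal{Q}_{0}\lessdot\mathcal{Q}_{1}\lessdot\dots\lessdot\mathcal{Q}_{r}=\mathcal{Q}_{\mathcal{T}'}$ consists of honest increasing flips.
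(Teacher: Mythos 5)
Your framework is exactly the paper's: reduce to a single covering relation $\mathcal{T}\lessdot\mathcal{T}'$ induced by $|S|$, observe that $\intsp(\mathcal{Q}_{\mathcal{T}})$ and $\intsp(\mathcal{Q}_{\mathcal{T}'})$ differ precisely by exchanging $\mathcal{I}_{l}(S,n)$ for $\mathcal{I}_{u}(S,n)$, and perform the replacement by a chain of atomic exchanges $\phi(\mathbf{n})\leadsto\phi(\mathbf{n}+\mathbf{t})$ indexed by $\mathcal{I}'(S,n)\setminus\mathcal{I}_{u}(S,n)$, with Theorem~\ref{thm-comb-flips} and Lemma~\ref{lem:interweaving_criterion_for_flips} certifying that each successful step is an increasing flip. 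You also correctly locate where the work lies: separatedness against $\mathcal{R}$ is automatic, and the only danger is a collision between two sets in $\mathcal{I}'(S,n)$. Your observation that $\phi(\mathbf{p}+k\mathbf{t})$ for $k\geqslant1$ never $2d$-interweaves an element of $\mathcal{I}_{l}(S,n)$ is in fact correct (by Lemma~\ref{lem:interweaving_criterion_for_flips} this would force all boundary coordinates of $\mathbf{q}$ off the walls of the box, contradicting $\phi(\mathbf{q})\in\mathcal{I}_{l}(S,n)$), and it parallels what the paper proves by its minimality-of-$i$ argument.

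Where the proposal falls short is exactly where you say you expect ``the crux'' to lie: you do not produce the global schedule of the atomic exchanges, nor verify that it avoids collisions, and this is the entire remaining mathematical content. The paper's resolution is that the index set $\mathcal{I}'(S,n)\setminus\mathcal{I}_{u}(S,n)$ carries, via $\phi$, the product order on $[1,s_{1}-s_{0}]\times[0,s_{2}-s_{1}-1]\times\cdots$ reversed on even coordinates, and \emph{any} linear extension of this lattice works. Two things then need checking and you check neither: first, that at stage $i$ the set $\phi(\mathbf{n}^{i})$ is actually present in $\mathcal{C}_{i-1}$ (this uses that $\mathbf{n}^{i}-\mathbf{t}$ either lies in $\mathcal{I}_{l}(S,n)$ or precedes $\mathbf{n}^{i}$); second, that if $\phi(\mathbf{n}^{i}+\mathbf{t})$ interweaves some $I\in\mathcal{I}'(S,n)\cap\mathcal{C}_{i}$, then by Lemma~\ref{lem:interweaving_criterion_for_flips} either $\phi(\mathbf{n}^{i})$ already interweaved $I$ (contradicting minimality of $i$) or $I$ precedes $\phi(\mathbf{n}^{i})$ in the lattice order and hence should already have been exchanged. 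Without the choice of ordering and this two-case analysis the argument does not close, so as written there is a genuine gap.
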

\begin{proof}
Suppose that the increasing flip of $\mathcal{T}$ which gives $\mathcal{T}'$ is induced by the $(2d+1)$-face $|S|$ of $C(n, n - 1)$. Then $\intsp(\mathcal{Q}_{\mathcal{T}}) \setminus \intsp(\mathcal{Q}_{\mathcal{T}'}) = \mathcal{I}_{l}(S,n)$ and $\intsp(\mathcal{Q}_{\mathcal{T}'}) \setminus \intsp(\mathcal{Q}_{\mathcal{T}}) = \mathcal{I}_{u}(S,n)$. Let $\mathcal{R} = \intsp(\mathcal{Q}_{\mathcal{T}}) \setminus \mathcal{I}_{l}(S,n) = \intsp(\mathcal{Q}_{\mathcal{T}'}) \setminus \mathcal{I}_{u}(S,n)$. Hence we must find a sequence of flips starting at $\mathcal{Q}_{\mathcal{T}}$ which gradually replaces $\mathcal{I}_{l}(S, n)$ with $\mathcal{I}_{u}(S, n)$.

The flips of cubillages we wish to perform are as follows. Given $\phi(\mathbf{n}) \in \mathcal{I}_{l}(S, n)$, we make the sequence of exchanges \[\phi(\mathbf{n}) \leadsto \phi(\mathbf{n} + \mathbf{t}) \leadsto \dots \leadsto \phi(\mathbf{n} + (\lambda_{\phi(\mathbf{n})} - 1)\mathbf{t}) \leadsto \phi(\mathbf{n} + \lambda_{\phi(\mathbf{n})}\mathbf{t)},\] where $\phi(\mathbf{n}) \leadsto \phi(\mathbf{n} + \mathbf{t})$ means that we remove $\phi(\mathbf{n})$ and replace it with $\phi(\mathbf{n} + \mathbf{t})$. Hence the set of exchange pairs in our sequence of flips from $\mathcal{Q}_{\mathcal{T}}$ to $\mathcal{Q}_{\mathcal{T}'}$ is \[\{(\phi(\mathbf{n} + r\mathbf{t}), \phi(\mathbf{n} + (r + 1)\mathbf{t}) \mid \phi(\mathbf{n}) \in \mathcal{I}_{l}(S, n), ~ 0 \leqslant r < \lambda_{\phi(\mathbf{n})}\}.\] We must show that there is an order in which we can make these exchanges such that after each exchange we still have a $2d$-separated collection. Here each exchange gives an increasing flip by Theorem~\ref{thm-comb-flips}. Note further that $\phi(\mathbf{n} + r\mathbf{t})$ and $\phi(\mathbf{n} + (r + 1)\mathbf{t})$ are tightly $2d$-interweaving, as we know must be the case from Theorem~\ref{thm-comb-flips}.

Our exchanges give a bijection
\begin{align*}
\mathcal{I}'(S, n) \setminus \mathcal{I}_{u}(S, n) &\rightarrow \mathcal{I}'(S, n) \setminus \mathcal{I}_{l}(S, n) \\
\phi(\mathbf{n}) &\mapsto \phi(\mathbf{n} + \mathbf{t}).
\end{align*}
Hence, we have one exchange per element of $\mathcal{I}'(S, n) \setminus \mathcal{I}_{u}(S, n)$. By Construction~\ref{constr:bij}, we have that $\phi$ is a bijection between $[1, s_{1} - s_{0}] \times [0, s_{2} - s_{1} - 1] \times \dots \times [1, s_{2d+1} - s_{2d}] \times [0, s_{0} - s_{2d+1} - 1 + n]$ and $\mathcal{I}'(S, n) \setminus \mathcal{I}_{u}(S, n)$. The set $[1, s_{1} - s_{0}] \times [0, s_{2} - s_{1} - 1] \times \dots \times [1, s_{2d+1} - s_{2d}] \times [0, s_{0} - s_{2d+1} - 1 + n]$ is a lattice under the order given by \[(n_{0}, n_{1}, \dots, n_{2d+1}) \leqslant (n'_{0}, n'_{1}, \dots, n'_{2d+1})\] if and only if for all $j$ \[n'_{2j} \leqslant n_{2j} \text{ and } n_{2j+1} \leqslant n'_{2j+1},\] since this is just the usual product order, but reversed on coordinates with even index.

We claim that any linear extension $\mathbf{n}^{1} < \dots < \mathbf{n}^{r}$ of this lattice gives an order on $\mathcal{I}'(S, n)\setminus \mathcal{I}_{u}(S, n)$ such that if $\mathcal{C}_{0} := \mathrm{Sp}(\mathcal{Q}_{\mathcal{T}})$ and $\mathcal{C}_{i} := (\mathcal{C}_{i - 1} \setminus \{\phi(\mathbf{n}^{i})\}) \cup \{\phi(\mathbf{n}^{i} + \mathbf{t})\}$, then $\mathcal{C}_{i}$ is $2d$-separated for all $i$. Note first that we always must have $\phi(\mathbf{n}^{i}) \in \mathcal{C}_{i - 1}$. This is because either $\phi(\mathbf{n}^{i}) \in \mathcal{I}_{l}(S, n)$ or $\phi(\mathbf{n}^{i} - \mathbf{t}) \in \mathcal{I}'(S, n) \setminus \mathcal{I}_{u}(S, n)$. Hence, either $\phi(\mathbf{n}^{i}) \in \mathcal{C}_{0}$, or $\phi(\mathbf{n}^{i})$ is the result of an earlier exchange, since $\mathbf{n}^{i} - \mathbf{t} < \mathbf{n}^{i}$ in our order.

Now suppose that $\mathcal{C}_{i}$ is not $2d$-separated for some $i$. We may choose the minimal $i$ for which this is the case. We first show that no element of $\mathcal{I}'(S, n)$ is $2d$-interweaving with any element of $\mathcal{R}$. Suppose, on the contrary, that there exist $I \in \mathcal{I}'(S, n)$ and $J \in \mathcal{R}$ such that $I$ and $J$ are $2d$-interweaving. Then, by Lemma~\ref{lem-int-endpts}, we have $X \subseteq \widehat{I}$ and $Y \subseteq \widehat{J}$ such that $\# X = \# Y = d + 1$ and $X$ and $Y$ are $2d$-interweaving. We have that $X \subseteq \widehat{I} \subseteq S$, and since $\# X = d + 1$, we must have either $X \not\supseteq S_{u}$, or $X \not\supseteq S_{l}$. If $X \not\supseteq S_{u}$, then $X \subseteq F$ for a $2d$-simplex $|F|$ of $\mathcal{T}$, by Gale's Evenness Criterion. This gives a contradiction, since $|F|$ and $|\widehat{J}|$ are both simplices of $\mathcal{T}$ and $(X, Y)$ is a circuit. One can derive a similar contradiction using $\mathcal{T}'$ when $X \not\supseteq S_{l}$.

Therefore, if $\mathcal{C}_{i}$ is not $2d$-separated, it must be because $\phi(\mathbf{n}^{i} + \mathbf{t})$ is $2d$-interweaving with an element $I \in \mathcal{I}(S, n) \cap \mathcal{C}_{i}$. By Lemma~\ref{lem:interweaving_criterion_for_flips}, we must have \[I = [s_{0}, s'_{0}] \cup [s_{1}, s'_{1}] \cup \dots \cup [s_{2d+1}, s'_{2d+1}] \in \mathcal{C}_{i} \setminus \{\phi(\mathbf{n}^{i} + \mathbf{t})\} = \mathcal{C}_{i - 1} \setminus \{\phi(\mathbf{n}^{i})\}\] such that either $s_{2j} + (n_{2j}^{i} - 1) - 1 < s'_{2j}$ and $s'_{2j+1} < s_{2j+1} + (n_{2j+1}^{i} + 1) - 1$ for all $j$, or $s'_{2j} < s_{2j} + (n_{2j}^{i} - 1) - 1$ and $s_{2j+1} + (n_{2j+1}^{i} + 1) - 1 < s'_{2j+1}$ for all $j$. In the latter case, we also have that $s'_{2j} < s_{2j} + n_{2j}^{i} - 1$ and $s_{2j+1} + n_{2j+1}^{i} - 1 < s'_{2j+1}$, so that $\phi(\mathbf{n}^{i})$ also $2d$-interweaves $I$, which means that $\mathcal{C}_{i - 1}$ is not $2d$-separated. This contradicts $i$ being the minimal index such that this was the case. In the former case, we have that $I$ precedes $\phi(\mathbf{n}^{i})$ in our chosen order on $\mathcal{I}'(S, n)\setminus \mathcal{I}_{u}(S, n)$. This means that $I$ must have already been exchanged, which is also a contradiction.

Therefore, we have cubillages $\mathcal{Q}_{0}, \dots, \mathcal{Q}_{r}$ such that $\mathcal{C}_{i} = \mathrm{Sp}(\mathcal{Q}_{i})$ for each $i$. By Theorem~\ref{thm-comb-flips}, we have \[\mathcal{Q}_{0} \lessdot \mathcal{Q}_{1} \lessdot \dots \lessdot \mathcal{Q}_{r}.\] By construction, we have that $\mathcal{Q}_{0} = \mathcal{Q}_{\mathcal{T}}$ and $\mathcal{Q}_{r} = \mathcal{Q}_{\mathcal{T}'}$.
\end{proof}

\begin{example}\label{ex:even_quot}
We give an example of the construction used to prove Theorem~\ref{thm-even-quot}.

\begin{enumerate}[wide]
\item Consider the triangulation $\mathcal{T}$ of the heptagon $C(7, 2)$ given by $\simp(\mathcal{T}) = \{13, 16, 35, 36\}$. We perform the increasing flip on this triangulation induced by the simplex $|1236|$, thereby obtaining the triangulation $\mathcal{T}'$ of $C(7, 2)$ with $\simp(\mathcal{T}') = \{16, 26, 35, 36\}$.

We have
\begin{align*}
\intsp(\mathcal{Q}_{\mathcal{T}}) = \{&13, 16, 35, 36,\\
&126, 134, 136, 346, 356, 367,\\
&1236, 1345, 1346, 1367, 3467, 3567, \\
&12346, 13456, 13467, 13567\}
\end{align*}
and
\begin{align*}
\intsp(\mathcal{Q}_{\mathcal{T}'}) = \{&16, 26, 35, 36\\
&126, 236, 267, 346, 356, 367,\\
&1236, 1367, 2346, 2367, 3467, 3567,\\
&12346, 13467, 13567, 23467\}.
\end{align*}
Moreover, \[\intsp(\mathcal{Q}_{\mathcal{T}})\setminus \intsp(\mathcal{Q}_{\mathcal{T}'}) = \mathcal{I}_{l}(1236, 7) = \{13, 134, 136, 1345, 1346, 13456\}\] and \[\intsp(\mathcal{Q}_{\mathcal{T}'})\setminus \intsp(\mathcal{Q}_{\mathcal{T}}) = \mathcal{I}_{u}(1236, 7) = \{26, 236, 267, 2346, 2367, 23467\}.\] We illustrate how we can gradually replace elements of $\mathcal{I}_{l}(1236, 7)$ in $\intsp(\mathcal{Q}_{\mathcal{T}})$ with the elements of $\mathcal{I}_{u}(1236, 7)$, whilst ensuring that the collection remains $2$-separated.

The coordinate parameterisation of $\mathcal{I}'(1236, 7)$ by $\phi$ gives
\begin{align*}
\phi(1, 0, 1, 0) &= 13, \\
\phi(1, 0, 2, 0) &= 134, \\
\phi(1, 0, 1, 1) &= 136, \\
\phi(1, 0, 3, 0) &= 1345, \\
\phi(1, 0, 2, 1) &= 1346, \\
\phi(1, 0, 3, 1) &= 13456, \\
\phi(0, 1, 0, 1) &= 26, \\
\phi(0, 1, 1, 1) &= 236, \\
\phi(0, 1, 0, 2) &= 267, \\
\phi(0, 1, 2, 1) &= 2346, \\
\phi(0, 1, 1, 2) &= 2367, \\
\phi(0, 1, 2, 2) &= 23467. 
\end{align*}
The bijection $\psi\colon \mathcal{I}_{l}(1236, 7) \rightarrow \mathcal{I}_{u}(1236, 7)$ in this case gives
\[
\begin{tabular}{rcccccl}
13 & = & $\phi(1, 0, 1, 0)$ & $\mapsto$ & $\phi(0, 1, 0, 1)$ & = & 26,\\
134 & = & $\phi(1, 0, 2, 0)$ & $\mapsto$ & $\phi(0, 1, 1, 1)$ & = & 236, \\
136 & = & $\phi(1, 0, 1, 1)$ & $\mapsto$ & $\phi(0, 1, 0, 2)$ & = & 267, \\
1345 & = & $\phi(1, 0, 3, 0)$ & $\mapsto$ & $\phi(0, 1, 2, 1)$ & = & 2346, \\
1346 & = & $\phi(1, 0, 2, 1)$ & $\mapsto$ & $\phi(0, 1, 1, 2)$ & = & 2367, \\
13456 & = & $\phi(1, 0, 3, 1)$ & $\mapsto$ & $\phi(0, 1, 2, 2)$ & = & 23467. \\
\end{tabular}
\]
Note that in this example, we have that $\mathcal{I}'_{l}(1236, 7) = \mathcal{I}_{l}(1236, 7)$ and $\mathcal{I}'_{u}(1236, 7) = \mathcal{I}_{u}(1236, 7)$, since we cannot have $\widehat{I} = 1236$ for any subset $I$. Thus we consider the lattice on $\mathcal{I}'(1237, 6)\setminus \mathcal{I}_{u}(1237, 6) = \mathcal{I}_{l}(1236, 7)$ given by
\[
\begin{tikzcd}
& (1, 0, 1, 1) && \\
(1, 0, 1, 0) \ar[ur] && (1, 0, 2, 1) \ar[ul] & \\
& (1, 0, 2, 0) \ar[ul] \ar[ur] && (1, 0, 3, 1) \ar[ul] \\
&& (1, 0, 3, 0) \ar[ul] \ar[ur],
\end{tikzcd}
\]
which is
\[
\begin{tikzcd}
& 136 && \\
13 \ar[ur] && 1346 \ar[ul] & \\
& 134 \ar[ul] \ar[ur] && 13456 \ar[ul] \\
&& 1345. \ar[ul] \ar[ur]
\end{tikzcd}
\]
Note here that we place minimal element of the lattice at the bottom. Therefore, by Theorem~\ref{thm-even-quot}, we may perform the exchanges replacing $\phi(\mathbf{n})$ by $\phi(\mathbf{n} + \mathbf{t})$ in an order given by any linear extension of
\[
\begin{tikzcd}
& 136 \leadsto 267 && \\
13 \leadsto 26 \ar[ur] && 1346 \leadsto 2367 \ar[ul] & \\
& 134 \leadsto 236 \ar[ul] \ar[ur] && 13456 \leadsto 23467 \ar[ul] \\
&& 1345 \leadsto 2346. \ar[ul] \ar[ur]
\end{tikzcd}
\]
Note here that we first make the exchange at the bottom of the lattice, and then move up.

\item We now give an example where we do not have $\mathcal{I}(S, n) = \mathcal{I}'(S, n)$. This example is somewhat larger than the previous example, so we do not go through it in the same level of detail.

Indeed, we do not consider full triangulations, but only the set $\mathcal{I}_{l}(1357, 8)$, which we wish to replace with the set $\mathcal{I}_{u}(1357, 8)$. Here we have $\mathcal{I}'_{l}(1357, 8) = \mathcal{I}_{l}(1357, 8) \cup \{1357\}$ and $\mathcal{I}'_{u}(1357, 8) = \mathcal{I}_{u}(1357, 8) \cup \{1357\}$. The sequence of exchanges from $\mathcal{I}_{l}(1357, 8)$ to $\mathcal{I}_{u}(1357, 8)$ is given by the bijection $\phi(\mathbf{n}) \mapsto \phi(\mathbf{n} + \mathbf{t})$ from $\mathcal{I}'(1357, 8)\setminus\mathcal{I}_{u}(1357, 8)$ to $\mathcal{I}'(1357, 8)\setminus\mathcal{I}_{l}(1357, 8)$.

Any sequence of exchanges done in the order of any linear extension of the following lattice will preserve 2-separatedness. One can check that this is the lattice from the proof of Theorem~\ref{thm-even-quot}.
\[
\adjustbox{scale=0.9, center}{
\begin{tikzcd}
&& 1357 \leadsto 3478 &&& \\
& 135 \leadsto 347 \ar[ur] & 157 \leadsto 378 \ar[u] & 13567 \leadsto 34578 \ar[ul] & 12357 \leadsto 13478 \ar[ull] & \\
15 \leadsto 37 \ar[ur] \ar[urr] & 1356 \leadsto 3457 \ar[u] \ar[urr] & 1567 \leadsto 3578 \ar[u] \ar[ur] & 1235 \leadsto 1347 \ar[ull] \ar[ur] & 1257 \leadsto 1378 \ar[ull] \ar[u] & 123567 \leadsto 134578 \ar[ull] \ar[ul] \\
& 156 \leadsto 357 \ar[ul] \ar[u] \ar[ur] & 12356 \leadsto 13457 \ar[ul] \ar[ur] \ar[urrr] & 125 \leadsto 137 \ar[ulll] \ar[u] \ar[ur] & 12567 \leadsto 13578 \ar[ull] \ar[u] \ar[ur] & \\
&&& 1256 \leadsto 1357 \ar[ull] \ar[ul] \ar[u] \ar[ur] &&
\end{tikzcd}
}
\]
Note that here, since $1357 \in \mathcal{I}'(1357, 8)\setminus\mathcal{I}_{l}(1357, 8)$, but $1357 \notin \mathcal{I}_{u}(1357, 8)$, we have that $1256 \leadsto 1357 \leadsto 3478$. That is, $1357$ is only an intermediate subset in the sequence of exchanges from $\mathcal{I}_{l}(1357, 8)$ to $\mathcal{I}_{u}(1357, 8)$.
\end{enumerate}
\end{example}

We now show the result for odd dimensions.

\begin{theorem}\label{thm-odd-quot}
Given triangulations $\mathcal{T}, \mathcal{T}'$ of $C(n, 2d + 1)$ such that $\mathcal{T} \lessdot \mathcal{T}'$, there exist cubillages $\mathcal{Q}_{0}, \dots, \mathcal{Q}_{r}$ of $Z(n, 2d + 2)$ such that $\mathcal{Q}_{0} = \mathcal{Q}_{\mathcal{T}}$, $\mathcal{Q}_{r} = \mathcal{Q}_{\mathcal{T}'}$ and \[\mathcal{Q}_{0} \lessdot \mathcal{Q}_{1} \lessdot \dots \lessdot \mathcal{Q}_{r},\] so that $\mathcal{Q}_{\mathcal{T}} \leqslant \mathcal{Q}_{\mathcal{T}'}$.
\end{theorem}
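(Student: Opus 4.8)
The plan is to imitate the proof of Theorem~\ref{thm-g-odd}, deducing the odd-dimensional statement from the even-dimensional one now provided by Theorem~\ref{thm-even-quot}. Fix $\delta = 2d + 1$ and triangulations $\mathcal{T} \lessdot \mathcal{T}'$ of $C(n, 2d + 1)$, and let $\hat{\mathcal{T}}$ and $\hat{\mathcal{T}}'$ be the associated triangulations of $C(n + 1, 2d + 2)$ from \cite[Definition 4.1]{rambau}. Recall from the proof of Theorem~\ref{thm-g-odd} that $\mathcal{Q}_{\mathcal{T}}$ is, by construction, the membrane obtained as the image of the $(n + 1)$-pie of the cubillage $\mathcal{Q}_{\hat{\mathcal{T}}}$ under $(n + 1)$-contraction, and likewise $\mathcal{Q}_{\mathcal{T}'}$ from $\mathcal{Q}_{\hat{\mathcal{T}}'}$.

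First I would observe that $\hat{\mathcal{T}} \leqslant \hat{\mathcal{T}}'$ in $\mathcal{S}(n + 1, 2d + 2)$, since the lifting construction $\mathcal{T} \mapsto \hat{\mathcal{T}}$ is order-preserving by \cite{rambau}. As $\mathcal{S}(n + 1, 2d + 2)$ is finite, there is a saturated chain $\hat{\mathcal{T}} = \mathcal{S}_0 \lessdot \mathcal{S}_1 \lessdot \dots \lessdot \mathcal{S}_m = \hat{\mathcal{T}}'$, and applying Theorem~\ref{thm-even-quot} to each cover $\mathcal{S}_i \lessdot \mathcal{S}_{i + 1}$ and concatenating the resulting increasing-flip sequences gives a chain $\mathcal{Q}_{\hat{\mathcal{T}}} = \mathcal{P}_0 \lessdot \mathcal{P}_1 \lessdot \dots \lessdot \mathcal{P}_N = \mathcal{Q}_{\hat{\mathcal{T}}'}$ of increasing flips in $\mathcal{B}(n + 1, 2d + 3)$.

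The remaining step is to transport this chain down to $\mathcal{B}(n, 2d + 2)$. The crux is to show that the assignment $\mathcal{P} \mapsto \mathcal{M}(\mathcal{P})$, sending a cubillage of $Z(n + 1, 2d + 3)$ to the membrane which is the image of its $(n + 1)$-pie under $(n + 1)$-contraction, is an order-preserving map $\mathcal{B}(n + 1, 2d + 3) \to \mathcal{B}(n, 2d + 2)$ which carries each increasing flip to an equality or to a sequence of increasing flips. Granting this, applying it along $\mathcal{P}_0 \lessdot \dots \lessdot \mathcal{P}_N$ and removing repetitions yields a chain of increasing flips from $\mathcal{M}(\mathcal{Q}_{\hat{\mathcal{T}}}) = \mathcal{Q}_{\mathcal{T}}$ to $\mathcal{M}(\mathcal{Q}_{\hat{\mathcal{T}}'}) = \mathcal{Q}_{\mathcal{T}'}$, which is exactly the conclusion required. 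To prove the crux I would work with separated collections: using the descriptions of membranes, $i$-pies and $i$-contractions from \cite{dkk,dkk-interrelations}, I would describe $\mathrm{Sp}(\mathcal{M}(\mathcal{P}))$ in terms of $\mathrm{Sp}(\mathcal{P})$ (roughly, the vertices $\xi_E$ with $n + 1 \notin E$ that lie on an edge of colour $n + 1$ in $\mathcal{P}$), and then check via Theorem~\ref{thm-comb-flips} that an increasing flip $\mathcal{P} \lessdot \mathcal{P}'$ with exchange pair $(A, B)$ either leaves $\mathrm{Sp}(\mathcal{M}(\mathcal{P}))$ unchanged, when the flip is disjoint from the colour-$(n + 1)$ zone, or changes it by exchanging some $A^{*}$ for $B^{*}$ with $A^{*}$ $(2d + 1)$-interweaving $B^{*}$, the exchange being forced to be increasing by the geometry of the face of $Z(n + 1, n + 1)$ that induces the original flip.

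I expect the main obstacle to be precisely this last assertion, namely the order-preservation of membrane extraction along $(n + 1)$-contraction: the bookkeeping of how $\mathrm{Sp}(\mathcal{M}(\mathcal{P}))$ is reconfigured by flips meeting the colour-$(n + 1)$ zone is delicate, and one must rule out an increasing flip upstairs inducing a decreasing flip downstairs. It may be cleaner to factor this through the statement that $(n + 1)$-contraction gives an order-preserving map $\mathcal{B}(n + 1, 2d + 3) \to \mathcal{B}(n, 2d + 3)$ together with order-preservation on the pie-membrane, adapting the structural results on fragmentations and contractions in \cite{dkk-interrelations}; alternatively, one could try to describe $\intsp(\mathcal{Q}_{\mathcal{T}})$ explicitly in terms of $U(\hat{\mathcal{T}})$ and run the argument of Theorem~\ref{thm-even-quot} directly inside $Z(n, 2d + 2)$.
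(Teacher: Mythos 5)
Your overall strategy — lift the odd-dimensional covering relation to even dimension via Rambau's construction $\mathcal{T} \mapsto \hat{\mathcal{T}}$, apply Theorem~\ref{thm-even-quot} upstairs, and then push the resulting chain of flips back down through $(n+1)$-contraction and membrane extraction — is precisely the route taken in the paper. However, there is a genuine error in the signs of both order-relations you assert, and as stated the intermediate claims are false and would block the argument.

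First, the lifting map is \emph{order-reversing}, not order-preserving: \cite[Proposition~5.14(i)]{rambau} gives $\hat{\mathcal{T}'} < \hat{\mathcal{T}}$ when $\mathcal{T} \lessdot \mathcal{T}'$. So one instead takes a chain $\mathcal{Q}'_{s} \lessdot \cdots \lessdot \mathcal{Q}'_{0}$ of increasing flips in $\mathcal{B}(n+1, 2d+3)$ with $\mathcal{Q}'_{s} = \mathcal{Q}_{\hat{\mathcal{T}'}}$ (the smaller one) and $\mathcal{Q}'_{0} = \mathcal{Q}_{\hat{\mathcal{T}}}$. Second, and consequently, the membrane-extraction map $\mathcal{P} \mapsto \mathcal{M}(\mathcal{P})$ along $(n+1)$-contraction is also \emph{order-reversing}: an increasing flip $\mathcal{Q}'_{i+1} \lessdot \mathcal{Q}'_{i}$ upstairs either leaves the membrane unchanged (when $n+1$ is not a generating vector of the $(2d+4)$-face $\Gamma$ inducing it), or produces an increasing flip $\mathcal{M}_{i} \lessdot \mathcal{M}_{i+1}$ downstairs \emph{in the opposite direction}. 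The mechanism is that the lower facets of $\pi_{n+1,2d+4}(\Gamma)$ containing the colour-$(n+1)$ edge contract onto the \emph{upper} facets of $\pi_{n+1,2d+4}(\Delta_{2d+3}/(n+1))$, where $\Delta_{2d+3}$ is the facet of $\Gamma$ not using $n+1$ as a generator, and similarly the upper facets of $\Gamma$ contract onto the lower facets. The worry you flag at the end — that an increasing flip upstairs might induce a decreasing flip downstairs — is exactly what happens, and this is precisely what makes the argument work, since it composes with the order-reversal of the lift to give order-preservation overall. As written, your proposal would have you trying to prove both maps are order-preserving; you would find counterexamples to each, and you would not be able to close the argument until both signs were corrected.

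The other suggestions you raise are worth a comment. Factoring through ``$(n+1)$-contraction gives an order-preserving map $\mathcal{B}(n+1, 2d+3) \to \mathcal{B}(n, 2d+3)$'' is not what the paper does and is a separate claim; the relevant object is the membrane inside the contraction, not the full contracted cubillage, and the direction there is again reversed. The alternative of describing $\intsp(\mathcal{Q}_{\mathcal{T}})$ directly in terms of $U(\hat{\mathcal{T}})$ and running the argument of Theorem~\ref{thm-even-quot} inside $Z(n, 2d+2)$ is not pursued in the paper, but would require re-deriving an explicit separated-collection description of $\mathcal{Q}_{\mathcal{T}}$ for $\delta$ odd, which the construction via membranes is designed to sidestep.
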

\begin{proof}
We start, as in the proof of Theorem~\ref{thm-g-odd}, by considering the triangulations $\hat{\mathcal{T}}, \hat{\mathcal{T}'}$ of $C(n+1,2d+2)$. By \cite[Proposition 5.14(i)]{rambau}, we have that $\hat{\mathcal{T}'} < \hat{\mathcal{T}}$. By Theorem~\ref{thm-even-quot}, there exist cubillages $\mathcal{Q}'_{s} \lessdot \dots \lessdot \mathcal{Q}'_{0}$ of $Z(n + 1, 2d + 3)$ such that $\mathcal{Q}'_{s} = \mathcal{Q}_{\hat{\mathcal{T}'}}$ and $\mathcal{Q}'_{0} = \mathcal{Q}_{\hat{\mathcal{T}}}$.

As in the proof of \cite[Lemma 5.2]{dkk}, we have that the $(n+1)$-contraction of $\mathcal{Q}'_{i}$ gives a membrane $\mathcal{M}_{i}$, which is a cubillage of $Z(n, 2d + 2)$. As in the proof of Theorem~\ref{thm-g-odd}, we have that $\mathcal{M}_{s} = \mathcal{Q}_{\mathcal{T}'}$ and $\mathcal{M}_{0} = \mathcal{Q}_{\mathcal{T}}$. We claim that for each $i$ we either have $\mathcal{M}_{i} = \mathcal{M}_{i + 1}$ or $\mathcal{M}_{i} \lessdot \mathcal{M}_{i + 1}$.

Consider the increasing flip which takes $\mathcal{Q}'_{i + 1}$ to $\mathcal{Q}'_{i}$. Suppose this increasing flip is induced by a $(2d + 4)$-face $\Gamma$ of $Z(n + 1, n + 1)$ which has $A$ as its set of generating vectors. If $n + 1 \notin A$, then the increasing flip does not affect the $(n + 1)$-pie, so that $\mathcal{M}_{i} = \mathcal{M}_{i + 1}$. Hence, suppose instead that $n + 1 \in A$. Let the lower facets of $\pi_{n + 1, 2d + 4}(\Gamma)$ consist of the cubes $\pi_{n + 1, 2d + 4}(\Delta_{j})$, where $\Delta_{j}$ is generated by $A \setminus \{a_{j}\}$, noting that we must have $a_{2d + 3} = n + 1$. Similarly, let the upper facets of $\pi_{n + 1, 2d + 4}(\Gamma)$ consist of the cubes $\pi_{n + 1, 2d + 4}(\Delta'_{j})$, where $\Delta'_{j}$ is generated by $A \setminus \{a_{j}\}$.

Then, it is well-known that for $j < k$ the cubes $\pi_{n + 1, 2d + 4}(\Delta_{j})$ and $\pi_{n + 1, 2d + 4}(\Delta_{k})$ intersect in an upper facet of $\pi_{n + 1, 2d + 4}(\Delta_{k})$ and a lower facet of $\pi_{n + 1, 2d + 4}(\Delta_{j})$, while the cubes $\pi_{n + 1, 2d + 4}(\Delta'_{j})$ and $\pi_{n + 1, 2d + 4}(\Delta'_{k})$ intersect in an upper facet of $\pi_{n + 1, 2d + 4}(\Delta'_{j})$ and a lower facet of $\pi_{n + 1, 2d + 4}(\Delta'_{k})$. This is because the increasing flip corresponds to inverting the packet of $A$: the cubes $\Delta_{j}$ and $\Delta'_{j}$ correspond to the sets $A \setminus \{a_{j}\}$; these must be ordered lexicographically for $\Delta_{j}$ and reverse-lexicographically for $\Delta'_{j}$.

Contracting the $(n + 1)$-pie of $\mathcal{Q}'_{i + 1}$ sends the cubes $\Delta_{j}$ for $j < 2d + 3$ to their facet generated by $A\setminus \{a_{j}, n + 1\}$, which is precisely the intersection $\Delta_{j} \cap \Delta_{2d + 3}$. By the above paragraph, this projects to an upper facet of $\pi_{n + 1, 2d + 4}(\Delta_{2d + 3})$. Hence the part of $\mathcal{M}_{i + 1}$ which lies within $\Gamma/(n + 1)$ consists of the upper facets of $\pi_{n + 1, 2d + 4}(\Delta_{2d + 3}/(n + 1))$. Here we use $\Gamma/(n + 1)$ to denote the image of $\Gamma/(n + 1)$ under the $(n + 1)$-contraction, and so forth. Similarly, we have that the part of $\mathcal{M}_{i}$ which lies within $\Gamma/(n + 1)$ consists of the lower facets of $\pi_{n + 1, 2d + 4}(\Delta'_{2d + 3}/(n + 1))$. We then have that $\Gamma/(n + 1) = \Delta_{2d + 3}/(n + 1) = \Delta'_{2d + 3}/(n + 1)$, and so $\mathcal{M}_{i} \lessdot \mathcal{M}_{i + 1}$. This is since $\mathcal{M}_{i}$ and $\mathcal{M}_{i + 1}$ only differ within $\Gamma/(n + 1)$, because $\mathcal{Q}'_{i + 1}$ and $\mathcal{Q}'_{i}$ only differ within $\Gamma$. Moreover, $\pi_{n, 2d + 3}(\mathcal{M}_{i + 1})$ contains the upper facets of $\pi_{n, 2d + 3}(\Gamma/(n + 1))$, whereas $\pi_{n, 2d + 3}(\mathcal{M}_{i})$ contains the lower facets of $\pi_{n, 2d + 3}(\Gamma/(n + 1))$. This argument is illustrated in Figure~\ref{fig:arg_ill}; compare \cite[Figure 7]{dkk-survey}.

This gives a chain of cubillages $\mathcal{Q}_{\mathcal{T}} = \mathcal{M}_{0} = \mathcal{Q}_{0} \lessdot \dots \lessdot \mathcal{Q}_{r} = \mathcal{M}_{s} = \mathcal{Q}_{\mathcal{T}'}$ by applying the result of the above paragraph to the chain $\mathcal{Q}'_{s} \lessdot \dots \lessdot \mathcal{Q}'_{0}$. Here the cubillages $\mathcal{Q}_{0}, \dots, \mathcal{Q}_{r}$ are the cubillages $\mathcal{M}_{0}, \dots, \mathcal{M}_{s}$ with the duplicates removed, corresponding to the cases above where $\mathcal{M}_{i} = \mathcal{M}_{i + 1}$.
\end{proof}

\begin{figure}
\caption{An illustration of the argument of Theorem~\ref{thm-odd-quot}.}\label{fig:arg_ill}
\[
\scalebox{0.8}{
\begin{tikzpicture}

\begin{scope}[xscale=0.7]


\coordinate(0) at (0,0);
\node at (0)[left = 1mm of 0]{$\emptyset$};
\coordinate(1) at (2,-2);
\node at (1)[below left = 1mm of 1]{1};
\coordinate(12) at (4,-3);
\node at (12)[below = 1mm of 12]{12};
\coordinate(123) at (6,-2);
\node at (123)[below right = 1mm of 123]{123};
\coordinate(1234) at (8,0);
\node at (1234)[right = 1mm of 1234]{1234};
\coordinate(234) at (6,2);
\node at (234)[above right = 1mm of 234]{234};
\coordinate(34) at (4,3);
\node at (34)[above = 1mm of 34]{34};
\coordinate(4) at (2,2);
\node at (4)[above left = 1mm of 4]{4};


\draw (0) -- (1) -- (12) -- (123) -- (1234) -- (234) -- (34) -- (4) -- (0);


\coordinate(3) at (2,1);
\coordinate(13) at (4,-1);
\coordinate(23) at (4,0);


\draw[fill=red!30,draw=none] (0) -- (4) -- (34) -- (3) -- (0);
\draw[fill=red!30,draw=none] (3) -- (34) -- (234) -- (23) -- (3);
\draw[fill=red!30,draw=none] (23) -- (234) -- (1234) -- (123) -- (23);


\draw (0) -- (3);
\draw (3) -- (34);
\draw (3) -- (23);
\draw (3) -- (13);
\draw (1) -- (13);
\draw (13) -- (123);
\draw (23) -- (123);
\draw (23) -- (234);


\node at (3) [below = 1mm of 3]{3};
\node at (13) [left = 1mm of 13]{13};
\node at (23) [below = 1mm of 23]{23};


\node at (-1.5,0) {\huge $\mathcal{Q}_{i+1}$};



\draw[->,ultra thick] (9.75, 0) -- (10.75, 0);


\coordinate(0) at (12,0);
\node at (0)[left = 1mm of 0]{$\emptyset$};
\coordinate(1) at (14,-2);
\node at (1)[below left = 1mm of 1]{1};
\coordinate(12) at (16,-3);
\node at (12)[below = 1mm of 12]{12};
\coordinate(123) at (18,-2);
\node at (123)[below right = 1mm of 123]{123};


\coordinate(3) at (14,1);
\coordinate(13) at (16,-1);
\coordinate(23) at (16,0);


\draw (0) -- (1) -- (12) -- (123) -- (23) -- (3) -- (0);



\draw (3) -- (13);
\draw (1) -- (13);
\draw (13) -- (123);


\node at (3) [above left = 1mm of 3]{3};
\node at (13) [left = 1mm of 13]{13};
\node at (23) [right = 1mm of 23]{23};


\draw[red,ultra thick] (0) -- (3) -- (23) -- (123);


\node at (18,0) {\huge \color{red} $\mathcal{M}_{i+1}$};

\end{scope}

\end{tikzpicture}
}
\]
\[
\scalebox{0.8}{
\begin{tikzpicture}

\begin{scope}[xscale=0.7]


\coordinate(0) at (0,0);
\node at (0)[left = 1mm of 0]{$\emptyset$};
\coordinate(1) at (2,-2);
\node at (1)[below left = 1mm of 1]{1};
\coordinate(12) at (4,-3);
\node at (12)[below = 1mm of 12]{12};
\coordinate(123) at (6,-2);
\node at (123)[below right = 1mm of 123]{123};
\coordinate(1234) at (8,0);
\node at (1234)[right = 1mm of 1234]{1234};
\coordinate(234) at (6,2);
\node at (234)[above right = 1mm of 234]{234};
\coordinate(34) at (4,3);
\node at (34)[above = 1mm of 34]{34};
\coordinate(4) at (2,2);
\node at (4)[above left = 1mm of 4]{4};


\draw (0) -- (1) -- (12) -- (123) -- (1234) -- (234) -- (34) -- (4) -- (0);


\coordinate(3) at (2,1);
\coordinate(13) at (4,-1);
\coordinate(134) at (6,1);


\draw[fill=red!30,draw=none] (0) -- (4) -- (34) -- (3) -- (0);
\draw[fill=red!30,draw=none] (3) -- (34) -- (134) -- (13) -- (3);
\draw[fill=red!30,draw=none] (13) -- (134) -- (1234) -- (123) -- (13);


\draw (0) -- (3);
\draw (3) -- (34);
\draw (13) -- (134);
\draw (34) -- (134);
\draw (3) -- (13);
\draw (1) -- (13);
\draw (13) -- (123);
\draw (134) -- (1234);


\node at (3) [below = 1mm of 3]{3};
\node at (13) [left = 1mm of 13]{13};
\node at (134) [above = 1mm of 134]{134};


\node at (-1.5,0) {\huge $\mathcal{Q}_{i}$};



\draw[->,ultra thick] (9.75, 0) -- (10.75, 0);


\coordinate(0) at (12,0);
\node at (0)[left = 1mm of 0]{$\emptyset$};
\coordinate(1) at (14,-2);
\node at (1)[below left = 1mm of 1]{1};
\coordinate(12) at (16,-3);
\node at (12)[below = 1mm of 12]{12};
\coordinate(123) at (18,-2);
\node at (123)[below right = 1mm of 123]{123};


\coordinate(3) at (14,1);
\coordinate(13) at (16,-1);
\coordinate(23) at (16,0);


\draw (0) -- (1) -- (12) -- (123) -- (23) -- (3) -- (0);



\draw (3) -- (13);
\draw (1) -- (13);
\draw (13) -- (123);


\node at (3) [above left = 1mm of 3]{3};
\node at (13) [left = 1mm of 13]{13};
\node at (23) [right = 1mm of 23]{23};


\draw[red,ultra thick] (0) -- (3) -- (13) -- (123);


\node at (18,0) {\huge \color{red} $\mathcal{M}_{i}$};

\end{scope}

\end{tikzpicture}
}
\]
\end{figure}
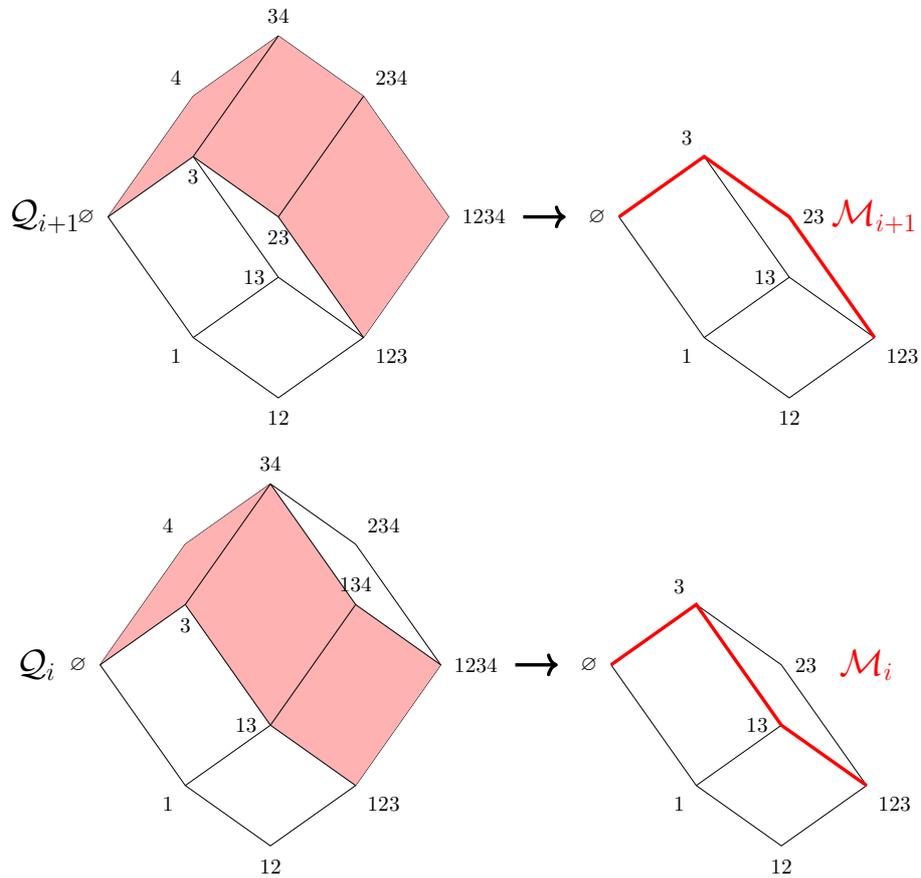

By putting together Theorem~\ref{thm-g-even}, Theorem~\ref{thm-g-odd}, Theorem~\ref{thm-even-quot}, and Theorem~\ref{thm-odd-quot}, this finally establishes Theorem~\ref{thm:quot}, and hence also Corollary~\ref{cor:t=st}.

\printbibliography

\end{document}